\newtheorem{dfn}{Definition}[section]
\newtheorem{thm}[dfn]{Theorem}
\newtheorem{prop}[dfn]{Proposition}
\newtheorem{lem}[dfn]{Lemma}
\newtheorem{rem}[dfn]{Remark}
\newtheorem{prob}[dfn]{Problem}
\newcommand{\R}{\mathbb{R}}
\newcommand{\N}{\mathbb{N}}
\newcommand{\tx}{\tilde{x}}
\newcommand{\ba}{\bar{a}}
\newcommand{\br}{\bar{r}}
\newcommand{\ta}{\tilde{a}}
\newcommand{\bydef}{\,\stackrel{\mbox{\tiny\textnormal{\raisebox{0ex}[0ex][0ex]{def}}}}{=}\,} 
\newcommand{\dagA}{A^\dagger}
\newcommand{\inc}{\bm{\iota}^{(N)}}
\numberwithin{equation}{section}
\definecolor{burgundy}{rgb}{0.5, 0.0, 0.13}
\def\corcommstyle{\bf\small\tt}
\def\corrl #1<<#2||#3>>{
\if\visiblecomments y
  \begin{quote} {\corcommstyle $<<$COMMENT$>>$ {\color{red}#1\marginpar{!!}}\\$<<$OLD$<<$} \end{quote}

{\color{red} 
 #2
 }

  \begin{quote} {\corcommstyle ==NEW== } \end{quote}
   \noindent\hrulefill
 
\vspace{-10pt} 
 
 \noindent\hrulefill
 
 \vspace{-10pt} 
 
 \noindent\dotfill
 
  #3
  
   \noindent\dotfill 

\vspace{-10pt} 
 
 \noindent\hrulefill
 
 \vspace{-10pt} 
 
 \noindent\hrulefill
  \begin{quote} {\corcommstyle $>>$END$>>$ } \end{quote}
 \else
  #3
 \fi
}
\long\def\longcorrl #1<<#2||#3>>{
\if\visiblecomments y
  \begin{quote} {\corcommstyle $<<$COMMENT$>>$ {\color{red}#1\marginpar{!!}}\\$<<$OLD$<<$} \end{quote}
 
 {\color{red}

  #2
  
  }
  
  \begin{quote} {\corcommstyle ==NEW== } \end{quote}
  
    \noindent\hrulefill
 
\vspace{-10pt} 
 
 \noindent\hrulefill
 
 \vspace{-10pt} 
 
 \noindent\dotfill
 
  #3
  
   \noindent\dotfill 

\vspace{-10pt} 
 
 \noindent\hrulefill
 
 \vspace{-10pt} 
 
 \noindent\hrulefill
  \begin{quote} {\corcommstyle $>>$END$>>$ } \end{quote}
 \else
  #3
 \fi
}
\def\mlabel #1
\def\flabel #1
\def\corrq #1<<#2>>{
\if\visiblecomments y
  \begin{quote} {\corcommstyle $<<$COMMENT$>>$ #1\marginpar{!!}\\$<<$BEG$<<$} \end{quote}
  \noindent\hrulefill
 
\vspace{-10pt} 
 
 \noindent\hrulefill
 
 \vspace{-10pt} 
 
 \noindent\dotfill
 
 {\color{red}
  
  #2
  
  }
   
  \noindent\dotfill 

\vspace{-10pt} 
 
 \noindent\hrulefill
 
 \vspace{-10pt} 
 
 \noindent\hrulefill 
  \begin{quote} {\corcommstyle $>>$END$>>$ } \end{quote}
 \else
  #2
 \fi
}
\long\def\longcorrq #1<<#2>>{
\if\visiblecomments y
  \begin{quote} {\corcommstyle $<<$COMMENT$>>$ #1\marginpar{!!}\\$<<$BEG$<<$} \end{quote}
  \noindent\hrulefill
 
\vspace{-10pt} 
 
 \noindent\hrulefill
 
 \vspace{-10pt} 
 
 \noindent\dotfill

  #2

  \noindent\dotfill 

\vspace{-10pt} 
 
 \noindent\hrulefill
 
 \vspace{-10pt} 
 
 \noindent\hrulefill 
  \begin{quote} {\corcommstyle $>>$END$>>$ } \end{quote}
 \else
  #2
 \fi
}
\def\corrc #1<<>>{
\if\visiblecomments y
  \begin{quote} {\corcommstyle $<<$COMMENT$>>$ \color{red} #1\marginpar{!!}} \end{quote}
\fi
}
\def\corre #1<<#2||#3>>{
\if\visiblecomments y
  #3\marginpar{\corcommstyle #1}
 \else
  #3
 \fi
}
\long\def\longcorre #1<<#2||#3>>{
\if\visiblecomments y
  #3\marginpar{\corcommstyle #1}
 \else
  #3
 \fi
}
\def\corrs #1<<#2||#3>>{
\if\visiblecomments y
  #3\marginpar{\corcommstyle #2 $\rightarrow$ #3\\ #1}
 \else
  #3
 \fi
}
\def\corro #1<<#2||#3>>{
#2}
\def\corrn #1<<#2||#3>>{
#3}
\long\def\longcorro #1<<#2||#3>>{
#2}
\long\def\longcorrn #1<<#2||#3>>{
#3}
\long\def\underconstruction #1<<<#2>>>{
\if\visiblecomments y
  \begin{quote} {\corcommstyle $<<$UNDER CONSTRUCTION - BEGIN$>>$ #1\marginpar{!!}} \end{quote}
   {\color{red}
  #2
  }
  \begin{quote} {\corcommstyle $>>$UNDER CONSTRUCTION - END$>>$ } \end{quote}
 \else
 \fi
}
\def\showcomments{
  \let\visiblecomments y
}
\def\hidecomments{
  \let\visiblecomments n
}
\newcommand{\KMa}[1]{{\color{black} #1}}
\newcommand{\JP}[1]{{\color{black} #1}}
\begin{document}

\title{\KMa{Saddle-Type Blow-Up Solutions with Computer-Assisted Proofs: Validation and Extraction of Global Nature}}

\author{
Jean-Philippe Lessard
\thanks{McGill University, Department of Mathematics and
Statistics, 805 Sherbrooke Street West, Montreal, QC, H3A 0B9, Canada ({\tt jp.lessard@mcgill.ca})}
\and
Kaname Matsue
\thanks{(Corresponding author)} $^{,}$ \footnote{Institute of Mathematics for Industry, Kyushu University, Fukuoka 819-0395, Japan ({\tt kmatsue@imi.kyushu-u.ac.jp})} $^{,}$ \footnote{International Institute for Carbon-Neutral Energy Research (WPI-I$^2$CNER), Kyushu University, Fukuoka 819-0395, Japan} 
\and 
Akitoshi Takayasu
\thanks{Faculty of Engineering, Information and Systems, University of Tsukuba, 1-1-1 Tennodai, Tsukuba, Ibaraki 305-8573, Japan (\texttt{takitoshi@risk.tsukuba.ac.jp})}
}
\maketitle

\begin{abstract}
In this paper, blow-up solutions of autonomous ordinary differential equations (ODEs) which are unstable under perturbations of initial \KMa{points}, \KMa{referred to as {\em saddle-type blow-up solutions}}, are studied. 
Combining dynamical systems machinery (e.g., compactifications, time-scale desingularizations of vector fields) with tools from computer-assisted proofs (e.g., rigorous integrators, the parameterization method for invariant manifolds), \KMa{these blow-up solutions are obtained as trajectories on local stable manifolds of hyperbolic saddle equilibria at infinity}. 
\KMa{With the help of computer-assisted proofs, global trajectories on stable manifolds, inducing blow-up solutions, \KMa{provide} a global picture organized by global-in-time solutions and blow-up solutions simultaneously.}
\KMa{Using the proposed methodology, intrinsic features of saddle-type blow-ups are observed}: \KMa{locally} smooth dependence of blow-up times on \KMa{initial points}, level set distribution of blow-up times, and \KMa{decomposition} of the phase space \KMa{playing a role as separatrixes among solutions, where the magnitude of initial points near those blow-ups does not matter for asymptotic behavior. 
Finally, singular behavior of blow-up times on initial points belonging to different family of blow-up solutions is addressed.}
\end{abstract} 

{\bf Keywords:} \KMa{saddle-type} blow-up solutions, rigorous numerics, compactifications, desingularization, parameterization method, separatrix


\section{Introduction}

\label{sec:intro}

Our concern in the present paper is blow-up solutions of the following initial value problem of an autonomous system of ordinary differential equations (ODEs) in $\mathbb{R}^n$:
\begin{equation}
\label{eqn:ODE}
\frac{dy(t)}{dt}=f (y(t)),\quad y(0)=y_0,
\end{equation}
where $t\in[0,T)$ with $0<T\le\infty$, $f:\mathbb{R}^n\to\mathbb{R}^n$ is a $C^1$ function and $y_0\in\mathbb{R}^n$.
We call a solution $y(t)$ of the initial value problem (\ref{eqn:ODE}) a {\em blow-up solution} if
\[
	t_{\max}\bydef\sup\left\{\bar t\mid \mbox{a solution $y\in C^1([0,\bar t))$ of \eqref{eqn:ODE} exists}\right\} < \infty.
\]
The maximal existence time $t_{\max}$ is then called the {\em blow-up time} of \eqref{eqn:ODE}.
Blow-up solutions can be seen in many dynamical systems generated by ODEs, or partial differential equations (PDEs) like nonlinear heat equations or Keller-Segel systems. These dynamical systems are categorized as exhibiting finite-time singularities, and have been the center of attention of many researchers, who have studied these phenomena from mathematical, physical, numerical viewpoints and so on  (e.g. \cite{FM2002, HV1997, M2016, W2013} from theoretical viewpoints and e.g. \cite{AIU2017, BK1988, C2016, CHO2007, ZS2017} from numerical viewpoints).
Fundamental questions for blow-up \KMa{solutions} are {\em whether or not a solution blows up} and, if it does, {\em when, where, and how} it blows up. In general, blow-up phenomena depend on initial \KMa{points}, and rigorously characterizing them as functions of initial \KMa{points} remains nontrivial.
\par
A typical approach for studying and proving existence of blow-up solutions is via {\em energy estimates} (see e.g. \cite{FM2002}), namely inequalities (involving energy functionals associated with the systems) giving sufficient conditions for existence of blow-up. In such cases, relatively large initial data induce finite-time blow-up.
However, in general, these criteria do not provide an answer on how large initial points should be to exhibit blow-up and how solutions behave when these criteria are violated.
There are several cases where initial points are divided such that solutions through them either exist globally in time or blow-up by means of \KMa{{\em bounded}} stationary solutions (e.g. \cite{F1969}).
A stationary solution with the above property is referred to as {\em the separatrix}, which plays a key role in describing asymptotic behavior of solutions. Despite their importance, results about the existence and explicit description of \KMa{separatrixes} are limited. 
On the other hand, there are also results about \KMa{the existence of} blow-ups in which the magnitude of initial points does not matter. 
Alternative approaches to the energy estimates have been introduced to prove such blow-ups, but their dependence on initial points remain unknown in many cases, while arguments based on energy estimates easily yield the continuous dependence of blow-up behavior on initial points by continuity of energy functionals.
Furthermore, there are also blow-up solutions whose asymptotic behavior is described not only by divergence, but also by complex behavior like oscillations, some of which are mentioned in Section \ref{section-remark-intro} (Concluding Remarks).
Mathematical and physical importance for studying blow-up behavior follow from such rich nature, but their comprehensive understanding are limited to well-known systems like PDEs mentioned above at present.
See e.g. \cite{FM2002, GV2002} for more detailed summaries of blow-up problems including another well-known characterization of blow-up solutions by means of (backward) {\em self-similarity}.
\par
Meanwhile, the second author has recently proposed a description of blow-up solutions from the viewpoint of dynamical systems \KMa{(\cite{Mat2018})}.
More precisely, {\em compactifications} of the phase space $\mathbb{R}^n$ is applied to mapping the infinity onto points on the boundary $\mathcal{E}$ of a compact manifold or their tangent spaces denoted by $\overline{\mathcal{D}}$ with $\partial \mathcal{D} = \mathcal{E}$.
\KMa{The boundary $\mathcal{E}$ shall be called {\em the horizon} in this context.}
Accordingly the vector field (\ref{eqn:ODE}) is transformed to one on the corresponding manifolds, but the behavior of solutions near the boundary \KMa{$\mathcal{E}$} is still singular reflecting the behavior of the original vector field \KMa{at infinity}.
The time-scale transformation, which shall be called the {\em time-scale desingularization}, is then introduced to desingularize the singularity of the vector field around $\mathcal{E}$.
Consequently, {\em dynamics at infinity} can be characterized through the time-transformed vector field, called {\em the desingularized vector field}, on $\overline{\mathcal{D}}$. 
Standard arguments in the theory of dynamical systems through compactifications show that divergent solutions of (\ref{eqn:ODE}) correspond to \KMa{global-in-time} solutions of the desingularized vector field converging to invariant sets on $\mathcal{E}$\footnote{
The above ideas themselves are applied to describe dynamics around bounded invariant sets in several preceding works (e.g. \cite{DH1999}).
}.
A significant consequence of the preceding studies is that, a solution of (\ref{eqn:ODE}) with bounded initial point is a blow-up solution, namely $t_{\max} < \infty$, if the image of the solution through a compactification mentioned above is on the \KMa{local} stable manifold of a {\em hyperbolic} equilibrium on $\mathcal{E}$ for the desingularized vector field\footnote{
The same conclusion holds for hyperbolic periodic orbits on $\mathcal{E}$.
A brief comment about the statement is mentioned in Remark \ref{rem-intro-per-blowup}.
Several theoretical generalizations are discussed in \cite{Mat2019}.
}.
\par
Simultaneously, the second and the third authors have developed a {\em computer-assisted} methodology for proving the existence of blow-up solutions for concretely given dynamical systems with rigorous \KMa{bounds} of their blow-up times $t_{\max}$ \cite{MT2020_1, MT2020_2, TMSTMO2017}.
The basic idea is the combination of compactifications as well as time-scale desingularizations mentioned above with rigorous integrator of ODEs based on interval (and affine) arithmetic and topological characterizations of asymptotic behavior such as locally defined Lyapunov functions.
\KMa{Evaluation of $t_{\max}$ is one of the most important issues in blow-up studies to estimate upper bounds of the existence of solutions, or the onset of finite-time singularities such as ignition in combustion studies (e.g., \cite{D1985}), while the study is limited even in numerical studies (e.g., \cite{C2016}).
The proposed methodology provides a rigorous and standard way to obtain both lower and upper bounds of $t_{\max}$ through dynamics at infinity.}
\par
The methodology works successfully for validating profiles and blow-up times of blow-up solutions generated by hyperbolic {\em stable} equilibria at infinity, while blow-up generated by unstable equilibria at infinity is not reported yet due to several technical difficulties.
Note that there is another work for characterizing blow-up solutions with computer assistance by the first author and his collaborators based on analytic approach \cite{DALP2015} whose detail is briefly mentioned in Section \ref{section-remark-intro}.
On the other hand, from the viewpoint of dynamics at infinity itself, namely when the viewpoint of blow-up characterizations is not considered, asymptotic behavior of {\rm unstable invariant sets at infinity} is quite natural to study towards description of global bounded dynamics (e.g. \cite{D2006, DH1999, DLA2006, GKO2020, KR2004}).
We then believe that blow-up solutions generated by unstable invariant sets at infinity contribute towards the comprehensive understanding of global dynamics, including characteristics such as criteria for the existence, dependence on initial points and analytic information of blow-up times.
Despite many mathematical and numerical studies of blow-ups, characterizations and computations of blow-up solutions which are unstable under perturbations of initial points in a standard way are not realistic, because we have to treat two numerical difficulties simultaneously:

\begin{itemize}
\item instability of \KMa{trajectories exhibiting blow-up solutions} under perturbations of initial points, and
\item treatment of infinity.
\end{itemize}
We shall call blow-up solutions \KMa{exhibiting instability under perturbation of initial points {\em saddle-type blow-up solutions} in the present paper, respecting the structure of equilibria at infinity.}
This fuzzy nature is difficult to characterize clearly in general, while such behavior can be partially observed in several practical problems as mentioned in Section \ref{section-remark-intro}.
\par
\bigskip

\JP{The main aim of the present paper is to reveal \KMa{a global} nature of \KMa{saddle-type} blow-up solutions through mathematically rigorous blow-up characterizations with the dynamical systems computational machineries mentioned above\KMa{, both qualitatively and quantitatively}. As any computational method inevitably suffers from numerical errors, due both to rounding and discretizing, one must question the validity of its output. This is especially through when solutions are sensitive to initial conditions, as it is the case for instance for dynamical systems possessing blow-up solutions or exhibiting chaos. In order to address the fundamental issue of reliability of computations, the recent field of computer-assisted proofs in nonlinear analysis emerged at the intersection of scientific computing, functional analysis, approximation theory, numerical analysis and topology. In essence, a computer-assisted proof is the process by which the hypotheses of a theorem are verified rigorously with the help of the computer. In the context of dynamical systems, early pioneering works include the proof of the universality of the Feigenbaum constant \cite{feigenbaum} and the proof of existence of the strange attractor in the Lorenz system \cite{Tucker2002}. We refer the interested reader to the survey papers \cite{KOCH_ComputeAssisted, NAKAO_VerifiedPDE, TUCKER_ValidatedIntroduction, VANDENBERG_Dynamics,GOMEZ_PDESurvey}, as well as the recent book \cite{MR3971222}. Computer-assisted proofs are one way to both characterize and visualize mathematical objects in a mathematically rigorous way. Keeping the success of computer-assisted proofs for various applications to dynamical systems (e.g. \cite{CLM2018, DALP2015, MT2020_1, MT2020_2, TMSTMO2017}) in mind, we believe that studying blow-up solutions with computer-assisted proofs provides rich insights into asymptotic behavior of solutions to differential equations as well as new research directions of global dynamics and finite-time singularities.}

\par
\KMa{
To validate saddle-type blow-up solutions, we combine the machinery applied in preceding works, compactifications and time-scale desingularizations, with {\em the parameterization method} (e.g. see \cite{MR1976079,MR1976080,MR2177465}).
The latter notion is now understood as one of universal machineries in dynamical systems, which aims at characterizing and constructing invariant manifolds, including local (un)stable manifolds of invariant sets such as equilibria and periodic orbits.
Moreover, the parameterization method with rigorous ODE integrations has a great compatibility with computer-assisted proofs to capture global nature of invariant manifolds in dynamical systems with their explicit enclosures. 
In particular, globally extended saddle-type blow-up solutions and the corresponding curves of blow-up times can be validated as easily as preceding works (\cite{MT2020_1, MT2020_2, TMSTMO2017}).
\par
We shall also unravel non-trivial and global nature of saddle-type blow-up solutions with the applicability of our proposed methodology through several examples. 
}
The main features of blow-up solutions we shall extract in the present paper are summarized as follows, \KMa{which are not observed in preceding works or theoretical characterizations of blow-ups:}
\begin{itemize}
\item The blow-up time $t_{\max}$ is described by a locally real-analytic function of \KMa{initial points} (Section \ref{sec:tmax-smoothness}).
\item Local foliation structure in level sets of blow-up times which is {\em independent of dynamics at infinity} is observed (Section \ref{sec:distribution_tmax}).
\item Chain of connecting orbits including those corresponding to saddle-type blow-up solutions can separate initial points into several regions possessing significantly different properties, where solutions through these points either exist global-in-time or blow up in finite time, {\em no matter how large \KMa{the magnitude of} initial points is} (Section \ref{sec:dep_blowup_time}).
\item The above chain of connecting orbits induces discontinuity of blow-up times (Section \ref{sec:dep_blowup_time}).
\end{itemize}
The first feature is one of the biggest benefits of the parameterized method in blow-up studies. 
In preceding works, no explicit expression of local stable manifolds is \KMa{obtained}, yielding at most upper and lower bounds of $t_{\max}$ (e.g., \cite{TMSTMO2017}).
In the present methodology, the explicit expressions of local stable manifolds as the graphs of locally analytic functions can be applied and hence, combined with formulae of $t_{\max}$ by means of integrals through trajectories, we obtain the explicit formulae of $t_{\max}$ as functions of initial points.
\par
Through computer-assisted proofs, we obtain explicit distributions of local stable manifolds with their visualizations.
We then see an interesting relationship between asymptotic behavior of blow-up solutions and the corresponding $t_{\max}$.
As the second feature, we see that the asymptotic dynamics near blow-up do {\em not} essentially contribute to determine blow-up times.
In other words, only the magnitude of solutions can determine $t_{\max}$.
The remaining features are also important and completely different from blow-up solutions possessing  persistence of structure under perturbations of initial points.
We see that, in the presence of saddle-type blow-up solutions, there is {\em no} relationship between the magnitude of initial points and blow-up behavior of solutions through these points.
All these features rely on computer-assisted proofs, implying that all results are mathematically rigorous and the methodology towards these results are available to a large class of \KMa{ODEs} {\em without any knowledge of blow-up behavior}.

\par
\bigskip

The rest of the present paper is organized as follows.
In Section \ref{sec:pre_blow_up}, we review a methodology for characterizing blow-up solutions from the viewpoint of dynamical systems, which is based on compactifications and time-scale desingularizations studied in e.g. \cite{Mat2018}.
Three types of compactifications are shown there: {\em directional}, {\em Poincar\'{e}-type} and {\em parabolic-type} ones.
\KMa{
The concrete process for characterizing blow-up solutions is explained for each compactification for readers' accessibility, while the fundamental idea is identical.
Both advantages and disadvantages of each compactification depending the situation are finally mentioned.
}
In Section \ref{sec:parameterization_method}, the parameterization method for calculating invariant manifolds is summarized.
In the present paper, we restrict our attention to stable manifolds of equilibria.
Under an essential assumption called the non-resonance condition of eigenvalues, local stable manifolds can be characterized as zeros of a countable family of nonlinear equations on Banach spaces.
Combining with the method of {\em radii polynomials}, which is one of standard functional-analytic and algebraic machineries for finding zeros of (infinite-dimensional) nonlinear maps, computer-assisted proofs of the existence and characterization of local stable manifolds are provided.
Note that the non-resonance condition yields that validated stable manifolds can be given as locally {\em real-analytic} functions.
In Section \ref{sec:methodology}, we provide a methodology of computer-assisted proofs of the existence of blow-up solutions. 
\KMa{Because} the detailed implementations such as the choice of compactifications and time-scale transformations is \KMa{problem-dependent}, only the basic idea for validating blow-up solutions are presented therein.
We also show that the present methodology enables us to provide an exact and explicit formula of the maximal existence time, equivalently the blow-up time, of solutions as a \KMa{locally smooth or real-analytic function of initial points}, provided all our implementations work successfully.
The present characterization of the \KMa{blow-up} time provides us with a quantitative feature of blow-up solutions such as distributions of blow-up times depending on initial \KMa{points} which are not provided in preceding works \cite{MT2020_1, MT2020_2, TMSTMO2017}.
\KMa{As we shall see, the combination of compactifications with the parameterization method provide a universal concept of blow-up validations and characterizations both qualitatively and quantitatively, no matter how stable equilibria on the horizon (for desingularized vector fields) are.}
\par
The applicability of the present methodology \KMa{and global nature of saddle-type blow-up solutions} are shown in successive sections.
In Section \ref{sec:demo1}, a two-dimensional ODE possessing saddle-type blow-up solutions is considered.
A locally defined (i.e. directional) compactification is applied, and a saddle-type blow-up \KMa{solution, as well as the blow-up time as a function of initial points,} is validated to check the applicability of our methodology to locally distributed blow-up solutions.
In particular, the blow-up profile as well as its blow-up time as a function of initial points \KMa{is} successfully validated\KMa{, extended} and visualized.
In Section \ref{sec:demo2}, we consider a three-dimensional system.
The Poincar\'{e}-type compactification is applied, and one- and two-dimensional stable manifolds of saddle equilibria \KMa{on the horizon} are validated.
The aim is to show the applicability of our methodology to saddle-type blow-up solutions distributed on multi-dimensional stable manifolds of unstable invariant sets \KMa{on the horizon}.
Furthermore, distribution of blow-up times as \KMa{functions} of initial points on two-dimensional stable manifolds are validated, which shows a relationship of blow-up times \KMa{to} the structure of stable manifolds \KMa{around the horizon}.
\KMa{Finally, global extension of local stable manifolds is demonstrated to visualize the distribution of blow-up nature.}
In Section \ref{sec:demo3}, a two-dimensional ODE which is quasi-homogeneous in an asymptotic sense is considered.
The system possesses both stable and unstable equilibria \KMa{on the horizon}.
The parabolic-type compactification is applied and a saddle-type blow-up solution is firstly validated, while validations of blow-up solutions asymptotic to stable equilibria on the horizon are already demonstrated in a preceding work \cite{MT2020_1}.
The main aim of this section is to study \KMa{global} nature of solution families \KMa{near} saddle-type blow-up solutions.
We see that saddle-type blow-up solutions can play the role of the {\em separatrix} decomposing initial points into collections of blow-up solutions and \KMa{global-in-time} solutions.
\KMa{
In other words, saddle-type blow-up solutions can divide initial points into those with globally bounded nature and blow-up nature, {\em no matter how large magnitudes of initial points are}.
This separation cannot be seen in blow-up solutions induced by solutions asymptotic to stable equilibria on the horizon for desingularized vector fields.
Moreover, it is also seen that {\em blow-up times can behave in a singular manner across the saddle-type blow-up solutions}.
Remark that such a singular nature has not been provided only by the local theory, because the {\em global} dynamical information requires to unravel it, while many theoretical characterizations of solution structures are stated only in the local sense. 
We emphasize that computer-assisted proofs enable us to clarify the global nature, even in dynamically singular one, with appropriately chosen machineries.}
All the codes for generating results with computer-assisted proofs in Sections \ref{sec:demo1}, \ref{sec:demo2} and \ref{sec:demo3} are available at \cite{code}.

\section{Preliminary 1: Characterization of blow-up solutions}

\label{sec:pre_blow_up}

In this section, we briefly review a characterization of blow-up solutions for autonomous, finite dimensional \KMa{systems of} ODEs from the viewpoint of dynamical systems.
\KMa{In particular, we pay attention to several concrete cases which are applied in examples later, while details of the present methodology are already provided in \cite{Mat2018, MT2020_2}.
} 
\par
\bigskip
Consider the initial value problem of an autonomous system of ODEs
\begin{equation}
\label{ODE-original}
y' = \frac{dy(t)}{dt}=f (y(t)),\quad y(0)=y_0,
\end{equation}
where $t\in[0,T)$ with $0<T\le\infty$, $f:\mathbb{R}^n\to\mathbb{R}^n$ is a $C^1$ function and $y_0\in\mathbb{R}^n$.

%
%
\subsection{Asymptotically quasi-homogeneous vector fields}
\label{sec:QH}

First of all, we review a class of vector fields in our present discussions.
\begin{dfn}[Asymptotically quasi-homogeneous vector fields, cf. \cite{D1993, Mat2018}]\rm
Let $\KMa{f_0}: \mathbb{R}^n \to \mathbb{R}$ be a smooth \KMa{(i.e. $C^r$ with $r\geq 1$)} function.
Let $\alpha_1,\ldots, \alpha_n, k \geq 1$ be natural numbers.
We say that $\KMa{f_0}$ is a {\em quasi-homogeneous function of type $\KMa{\alpha = } (\alpha_1,\ldots, \alpha_n)$ and order $k$} if
\begin{equation*}
\KMa{f_0}(s^{\alpha_1}x_1,\ldots, s^{\alpha_n}x_n) = s^k \KMa{f_0}(x_1,\ldots, x_n),\quad \forall x\in \mathbb{R}^n,\quad s\in \mathbb{R}.
\end{equation*}
\par
Next, let $X = \sum_{j=1}^n f_j(x)\frac{\partial }{\partial x_j}$ be a smooth vector field on $\mathbb{R}^n$.
We say that $X$, or simply $f = (f_1,\ldots, f_n)$ is a {\em quasi-homogeneous vector field of type $\KMa{\alpha = }(\alpha_1,\ldots, \alpha_n)$ and order $k+1$} if each component $f_j$ is a quasi-homogeneous function of type $\KMa{\alpha}$ and order $k + \alpha_j$.
\par
Finally, we say that $X = \sum_{j=1}^n f_j(x)\frac{\partial }{\partial x_j}$, or simply $f$ is an {\em asymptotically quasi-homogeneous vector field of type $\KMa{\alpha =}(\alpha_1,\ldots, \alpha_n)$ and order $k+1$ at infinity} if \KMa{there is a quasi-homogeneous vector field  $f_{\alpha,k} = (f_{j; \alpha,k})_{j=1}^n$ of type $\KMa{\alpha}$ and order $k+1$ such that}
\begin{equation*}
\lim_{s\to +\infty} s^{-(k+\alpha_j)}\left\{ f_j(s^{\alpha_1}x_1, \ldots, s^{\alpha_n}x_n) - s^{k+\alpha_j} \KMa{f_{j;\alpha,k}}(x_1, \ldots, x_n) \right\}
 = 0
 \end{equation*}
holds uniformly for $(x_1,\ldots, x_n)\in S^{n-1} \equiv \{x = (x_1,\ldots, x_n) \in \mathbb{R}^n \mid \sum_{i=1}^n x_i^2 = 1\}$.
\end{dfn}
Throughout successive sections, consider the (autonomous) vector field (\ref{ODE-original}),
where $f: \mathbb{R}^n \to \mathbb{R}^n$ is an asymptotically quasi-homogeneous smooth vector field of type $\alpha = (\alpha_1,\ldots, \alpha_n)$ and order $k+1$ at infinity.

%
%

\subsection{\KMa{Compactifications, Dynamics at Infinity and Blow-Up Criteria}}
\label{sec:compactification}
\KMa{
Here we summarize the basic strategy used throughout the successive sections.
The main idea is application of {\em compactifications}; the embedding of the original phase space into compact manifolds or their tangent spaces with boundaries.
The boundaries then correspond to the infinity.
There are mainly two different types of compactifications: the locally defined one and globally defined one.
The local one is simple and applied to many preceding works involving dynamics at infinity, while the global one enables us to treat dynamics including infinity in one chart.
After introducing compactifications, we derive vector fields which we mainly concern, and provide the characterization of blow-up solutions by means of dynamical systems.
The concrete process for the characterization of blow-up solutions is provided for each compactification which we introduce.
}

\subsubsection{\KMa{A basic strategy}}
\label{sec:basic}

\KMa{
The basic strategy for characterizing blow-up solutions is summarized as follows, which is independent of the choice of compactifications introduced below.
\begin{enumerate}
\item For given vector field $f$ provided by (\ref{ODE-original}), determine its type $\alpha$ and order $k+1$.
\item Choose an appropriate compactification of the same type $\alpha$ (mentioned below) as $f$.
\item Transform (\ref{ODE-original}) into the corresponding one through the compactification.
\item Introduce a time-scale transformation to desingularize the vector field determined by the order $k+1$ of $f$. 
The resulting vector field shall be called {\em the desingularized vector field}.
{\em Dynamics at infinity} then makes sense through the desingularized vector field.
\item Validate hyperbolic invariant sets on the special geometric object corresponding to infinity, {\em the horizon}, and their local stable manifolds for the desingularized vector field.
\end{enumerate}
Once invariant sets, such as equilibria and periodic orbits, {\em on the horizon} with their hyperbolicity are validated, their local stable manifolds characterize the collection of blow-up solutions of (\ref{ODE-original}) near blow-ups, which is the essence of our proposing methodology.
In the successive parts, the blow-up characterization is shown for each compactification.
\par
An important point here is a suitable choice of \lq\lq appropriate" compactifications so that our blow-up problem can be reduced to standard issues in dynamical systems.
Below are examples of such suitable compactifications, which possess both advantages and disadvantages and hence these compactifications have to be used according to our needs.
Several characteristics of compactifications are summarized in Section \ref{rem-choice-compactification}.
}

\subsubsection{Directional compactifications}
\label{sec:dir}
\KMa{First a locally defined compactification is introduced, which shall be called a {\em directional compactification}.}
\begin{dfn}[Directional compactification, cf. \cite{DLA2006, Mat2018}]\rm
A {\em directional compactification}\footnote
{
Although $T_d$ is not a compactification in the topological sense, we shall use this terminology for $T_d$ from its geometric interpretation shown below.
} of type $\alpha = (\alpha_1, \ldots, \alpha_n)$ is defined as 
\begin{align}
\notag
&y = (y_1,\ldots, y_n) \mapsto 
T_d(y) = (s,\hat x) \equiv (s, \hat x_1,\ldots, \hat x_{i_0 -1}, \hat x_{i_0+1}, \ldots, \hat x_n),\\
\label{dir-cpt}
&y_i := \frac{\hat x_i}{s^{\alpha_i}}\quad (i\not = i_0),\quad y_{i_0} := \pm \frac{1}{s^{\alpha_{i_0}}}
\end{align}
with given direction $i_0\in \{1,\ldots, n\}$ and the signature $\pm$.
This compactification is bijective in $\mathbb{R}^n\cap \{\pm y_{i_0}  > 0\}$, in which sense directional compactifications are {\em local} ones.
In particular, this compactification is available when we are interested in trajectories of (\ref{ODE-original}) such that the $i_0$-th component has the identical sign during time evolution.
The image of $T_d$ is
\begin{equation}
\label{D-directional}
\mathcal{D} = \{(s, \hat x_1,\ldots, \hat x_{i_0-1}, \hat x_{i_0+1}, \ldots, \hat x_n) \mid s >0,\quad \hat x_i \in \mathbb{R}\quad (i\not = i_0)\}.
\end{equation}
The set $\mathcal{E} = \{s=0\}$ corresponds to the infinity in the original coordinate, which shall be called {\em the horizon}.
\end{dfn}
\KMa{
Other geometric interpretations are mentioned in Section \ref{rem-choice-compactification}.
For simplicity, fix $i_0=1$ in (\ref{dir-cpt}) in the following arguments.
}
Next transform (\ref{ODE-original}) via (\ref{dir-cpt}), which is straightforward:
\begin{align*}
\frac{ds}{dt} &= -\frac{1}{\alpha_1}s^{-k+1}\hat f_1(s, \hat x_2, \ldots, \hat x_n),\\
\frac{d\hat x_i}{dt} &= s^{-k} \left\{ \hat f_i(s, \hat x_2, \ldots, \hat x_n) -\frac{\alpha_i}{\alpha_1}x_i \hat f_1(s, \hat x_2, \ldots, \hat x_n) \right\}\quad (i=2,\ldots, n),
\end{align*}
where
\begin{equation}
\label{f-tilde-directional}
\hat f_i(s, \hat x_2, \ldots, \hat x_n) \bydef s^{k+\alpha_i} f_i(s^{-\alpha_1}, s^{-\alpha_2}\hat x_2, \ldots, s^{-\alpha_n}\hat x_n),\quad i=1,\ldots, n.
\end{equation}
The resulting vector field is still singular near \KMa{the horizon}, but it turns out that the order of divergence of vector field as $s\to +0$ is $O(s^{-k})$, and hence the following time-scale transformation is available.

\begin{dfn}[Time-variable desingularization: \KMa{the directional} version]\rm 
Define the new time variable $\tau_d$ by
\begin{equation}
\label{time-desing-directional}
d\tau_d = s(t)^{-k} dt
\end{equation}
equivalently,
\begin{equation}
\label{time-desing-directional-integral}
t =  t_0 + \int_{\tau_0}^\tau s(\tau_d)^k d\tau_d,
\end{equation}
where $\tau_0$ and $t_0$ denote the correspondence of initial times, and $s(\tau_d)$ is the solution trajectory $s(t)$ under the parameter $\tau_d$.
We shall call (\ref{time-desing-directional}) {\em the time-variable desingularization (of order $k+1$)}.
\end{dfn}

The vector field $g=g_d$ in $\tau_d$-time-scale is 
\begin{equation}
\label{ODE-desing-directional}
\begin{pmatrix}
\frac{ds}{d\tau_d} \\ \frac{dx_2}{d\tau_d} \\ \vdots \\ \frac{dx_{n}}{d\tau_d}
\end{pmatrix}
=
g_d(s,\hat x_2,\ldots, \hat x_n)
\bydef 
\begin{pmatrix}
-s & 0 & \cdots & 0 \\
0 & 1 & \cdots & 0\\
\vdots & \vdots & \ddots & \vdots \\
0 & 0 & \cdots & 1
\end{pmatrix}
B
\begin{pmatrix}
\hat f_1 \\ \hat f_2 \\ \vdots \\ \hat f_n
\end{pmatrix}
\end{equation}
where $B$ is the inverse\footnote{
The existence of $B$ immediately follows by cyclic permutations and the fact that $\alpha_n > 0$.
}
of the matrix
\begin{equation*}
\begin{pmatrix}
\alpha_1 & 0  & \cdots & 0& 0\\
\alpha_2 \hat x_2 & 1 & \cdots & 0 & 0 \\
\vdots & \vdots & \ddots & \vdots & \vdots \\
\alpha_{n-1} \hat x_{n-1} & 0 & \cdots & 1 & 0\\
\alpha_n \hat x_n & 0  & \cdots & 0 & 1
\end{pmatrix}.
\end{equation*}
The componentwise expression is
\begin{align*}
\frac{ds}{d\tau_d} &= g_{d,1}(s,\hat x_2,\ldots, \hat x_n) \equiv -\frac{1}{\alpha_1}s^{-k+1}\hat f_1(s, \hat x_2, \ldots, \hat x_n),\\
\frac{d\hat x_i}{d\tau_d} &= g_{d,i}(s,\hat x_2,\ldots, \hat x_n) \equiv \hat f_i(s, \hat x_2, \ldots, \hat x_n) -\frac{\alpha_i}{\alpha_1}x_i \hat f_1(s, \hat x_2, \ldots, \hat x_n)\quad (i=2,\ldots, n).
\end{align*}
This vector field is as smooth as $f$ {\em including $s=0$} and hence {\em dynamics at infinity} makes sense through dynamics generated by (\ref{ODE-desing-directional}) around the horizon $\mathcal{E} = \{s=0\}$.
Once the desingularized vector field (\ref{ODE-desing-directional}) is provided, blow-up solutions can be characterized as follows.

\begin{thm}[Stationary blow-up: the directional version, \cite{Mat2018}]
\label{thm:blowup-dir}
Assume that \KMa{the desingularized vector field (\ref{ODE-desing-directional}) associated with (\ref{ODE-original}) has an equilibrium on the horizon ${\bf x}_\ast = (0,x_\ast)\in \mathcal{E}$}.
Also suppose that ${\bf x}_\ast$ is hyperbolic with $n_s > 0$ (resp. $n_u = n-n_s$) eigenvalues of the Jacobian matrix $Dg_d({\bf x}_\ast)$ with negative (resp. positive) real parts.
If there is a solution $y(t)$ of (\ref{ODE-original}) with a bounded initial \KMa{point} $y(0)$ whose image ${\bf x} = T_d(y)$ is on the local stable manifold $W^s_{\rm loc}({\bf x}_\ast; g_d)$, then $t_{\max} < \infty$ holds; namely, $y(t)$ is a blow-up solution.
Moreover,
\begin{equation*}
s(t)^{-1} \sim c(t_{\max} - t)^{-1/k}\quad \text{ as }\quad t\to t_{\max}
\end{equation*}
where $c>0$ is a constant.
Finally, if the $i$-th component of ${\bf x}_\ast$ ($i\in \{2,\ldots, n\}$) is not zero, then we also have
\begin{equation*}
y_i(t) \sim c_i(t_{\max} - t)^{-\alpha_i /k}\quad \text{ as }\quad t\to t_{\max},
\end{equation*}
where $c_i$ is a constant with the same sign as $y_i(t)$ as $t\to t_{\max}$.
\end{thm}

\begin{rem}
Note that there are other locally defined compactifications, such as a {\em quasi-polar} one known as {\em Poincar\'{e}-Lyapunov disk} (e.g. \cite{DH1999, DLA2006, Mat2018}).
\end{rem}

%
%
\subsubsection{Poincar\'{e}-type compactifications}

\KMa{
The remaining compactifications we introduce here are {\em global} ones in the sense that they are embeddings of the whole phase space $\mathbb{R}^n$ into compact manifolds with boundaries.
A suitable class of global type compactifications for characterizing dynamics at infinity for asymptotically quasi-homogeneous vector fields is discuss in \cite{MT2020_1}, where such a class of compactifications are called {\em admissible global compactifications}.
Among such compactifications, two representative compactifications are reviewed.
}
\par
\KMa
{
As a general setting, for given $n$-tuple of natural numbers $\alpha = (\alpha_1,\ldots, \alpha_n)$, 
let $\beta_1,\ldots, \beta_n$ be natural numbers\footnote{
The simplest choice of the natural number $c$ is the least common multiple of $\alpha_1,\ldots, \alpha_n$.
Once we choose such $c$, we can determine the $n$-tuples of natural numbers $\beta_1,\ldots, \beta_n$ uniquely.
The choice of natural numbers in (\ref{LCM}) is essential to desingularize vector fields at infinity, as shown below.
} such that 
\begin{equation}
\label{LCM}
\alpha_1 \beta_1 = \alpha_2 \beta_2 = \cdots = \alpha_n \beta_n \equiv c \in \mathbb{N}.
\end{equation}
Then define a functional $p(y)$ as
\begin{equation}
\label{func-p}
p(y) \bydef \left( y_1^{2\beta_1} +  y_2^{2\beta_2} + \cdots +  y_n^{2\beta_n} \right)^{1/2c}.
\end{equation}
}
The prototype of admissible global compactifications is the {\em Poincar\'{e}-type}.
\KMa{
\begin{dfn}[Poincar\'{e}-type compactification. cf. \cite{Mat2018}]\rm
The {\em Poincar\'{e}-type compactification (of type $\alpha = (\alpha_1, \ldots, \alpha_n)$)} is defined as the mapping $T_{qP}: \mathbb{R}^n \to \mathbb{R}^n$ as
\begin{equation}
\label{cpt-global}
\quad T_{qP}(y) = x,\quad x_i \bydef \frac{y_i}{\kappa(y)^{\alpha_i}},
\end{equation}
with $\kappa(y) = \kappa_{qP}(y) \bydef (1+p(y)^{2c})^{1/2c}$.
The map $T_{qP}$ maps $\mathbb{R}^n$ onto 
\begin{equation}
\label{D-global}
\mathcal{D} = \{x\in \mathbb{R}^n \mid p(x) < 1\}.
\end{equation}
The boundary $\mathcal{E} \equiv \partial \mathcal{D} = \{x\in \mathbb{R}^n \mid p(x) = 1\}$ is called the {\em horizon}.
\end{dfn}
}
\KMa{Its geometric interpretation is mentioned in Section \ref{rem-choice-compactification}.}
Note from \cite{Mat2018} that $\kappa = \kappa_{qP}(y)$ has an equivalent expression by means of $x$:
\begin{equation*}
\kappa = \kappa_{qP}(T_{qP}^{-1}(x)) = \left(1 - \sum_{j= 1}^n x_j^{2\beta_j}\right)^{-1/2c}.
\end{equation*}
\KMa{
Similar to the directional ones, for given vector field $f$ of the same type $\alpha$, we apply the Poincar\'{e}-type compactification of the same type $\alpha$.}
Then we have
\begin{align*}
\frac{dx_i}{dt} = 
 \tilde f_i(x) - \alpha_i x_i \sum_{j=1}^n (\nabla \kappa)_j \kappa^{\alpha_j - 1}\tilde f_j(x),
\end{align*}
where
\begin{equation}
\label{f-tilde-Poincare}
\tilde f_j(x_1,\ldots, x_n) := \kappa^{-(k+\alpha_j)} f_j(\kappa^{\alpha_1}x_1, \ldots, \kappa^{\alpha_n}x_n),\quad j=1,\ldots, n,
\end{equation}
which is the alternate object of $\hat f_j$'s in (\ref{f-tilde-directional}), $\kappa = \kappa_{qP}(y)$, and
\begin{equation}
\label{nabla_kappa}
(\nabla \kappa)_j \equiv (\nabla_y \kappa(y))_j  = \frac{\beta_j y_j^{2\beta_j-1}}{c\kappa^{2c-1} } = \frac{\beta_j \kappa^{2c-\alpha_j} x_j^{2\beta_j-1}}{c\kappa^{2c-1} } = \frac{x_j^{2\beta_j-1}}{\alpha_j \kappa^{\alpha_j-1} }.
\end{equation}

It is shown in \cite{Mat2018} that the above vector field is still singular on the horizon $\mathcal{E}$, but the order of divergence is $O(\kappa^k)$ as \KMa{$p(y)\to +\infty$, equivalently $p(x)\to 1$}, which is independent of components.
Therefore a common time-scale transformation can be introduced.

\begin{dfn}[Time-variable desingularization: the Poincar\'{e}-type version]\rm 
Define the new time variable $\tau_d$ by
\begin{equation}
\label{time-desing-Poin}
d\tau_{qP} = \kappa_{qP}(y(t))^k dt
\end{equation}
equivalently,
\begin{equation}
\label{time-desing-Poin-integral}
t = t_0 + \int_{\tau_0}^\tau \kappa_{qP}(y(\tau_{qP}))^{-k} d\tau_{qP},
\end{equation}
where $\tau_0$ and $t_0$ denote the corresponding initial times, and $y(\tau_{qP})$ is the solution  $y(t)$ under the time-scale $\tau_{qP}$.
We shall call (\ref{time-desing-Poin}) {\em the time-variable desingularization (of order $k+1$)}.
\end{dfn}

Using this time-scale, we obtain 
\begin{align}
\label{vectorfield-cw-Poincare}
\dot x_i = \frac{dx_i}{d\tau} &= g_{qP, i}(x) \bydef
 \tilde f_i(x) - \alpha_i x_i \sum_{j=1}^n \frac{x_j^{2\beta_j-1}}{\alpha_j} \tilde f_j(x).
\end{align}
This vector field is continuous including the horizon $\mathcal{E}$, and hence {\em dynamics at infinity} makes sense through (\ref{vectorfield-cw-Poincare}).
It should be noted, however, that the desingularized vector field (\ref{vectorfield-cw-Poincare}) is not always smooth on $\mathcal{E}$. 
Details are mentioned in Section \ref{sec:choice-compactification}.
Similar to Theorem \ref{thm:blowup-dir}, blow-up characterization is provided as follows.

\begin{thm}[Stationary blow-up: the Poincar\'{e}-type version, \cite{Mat2018}]
\label{thm:blowup-Poincare}
Consider the desingularized vector field $g_{qP}$ associated with (\ref{ODE-original}) given by (\ref{vectorfield-cw-Poincare}).
Assume that \KMa{$g_{qP}$ is $C^1$ in a neighborhood of the horizon $\mathcal{E}$, and that $g_{qP}$ has an equilibrium on the horizon ${\bf x}_\ast\in \mathcal{E}$}.
Suppose that ${\bf x}_\ast$ is hyperbolic with $n_s > 0$ (resp. $n_u = n-n_s$) eigenvalues of $Dg_{qP}({\bf x}_\ast)$ with negative (resp. positive) real parts.
If there is a solution $y(t)$ of (\ref{ODE-original}) with a bounded initial \KMa{point} $y(0)$ whose image $x = T_{qP}(y)$ is on the local stable manifold $W^s_{\rm loc}({\bf x}_\ast; g_{qP})$, then $t_{\max} < \infty$ holds; namely, $y(t)$ is a blow-up solution.
Moreover,
\begin{equation*}
\KMa{p(y(t))} \sim c(t_{\max} - t)^{-1/k}\quad \text{ as }\quad t\to t_{\max},
\end{equation*}
where $c>0$ is a constant.
Finally, if the $j$-th component ${\bf x}_\ast$ is not zero, then we also have
\begin{equation*}
y_i(t) \sim c_i(t_{\max} - t)^{-\alpha_i /k}\quad \text{ as }\quad t\to t_{\max},
\end{equation*}
where $c_i$ is a constant with the same sign as $y_i(t)$ as $t\to t_{\max}$.
\end{thm}

\subsubsection{Parabolic-type compactifications}
\KMa{
An alternative admissible global compactification, which shall be called the {\em parabolic-type} compactification, is introduced here.
Compactifications of the present type were originally introduced in \cite{G2004} and generalized in \cite{MT2020_2}.
}
\par
Similar to the Poincar\'{e}-type compactifications, define a set $\mathcal{D}\subset \mathbb{R}^n$ by (\ref{D-global}).
For any $x\in \mathcal{D}$, \KMa{correspond $y\in \mathbb{R}^n$ to $x\in \mathcal{D}$} by 
\begin{equation*}
S(x) = y,\quad y_j = \frac{x_j}{(1- p(x)^{2c})^{\alpha_j}},\quad j=1,\ldots, n.
\end{equation*}
Let $\tilde \kappa_\alpha(x) \bydef (1-p(x)^{2c})^{-1}$, which satisfies $\tilde \kappa_\alpha(x) \geq 1$ for all $x\in \mathcal{D}$.
Moreover, $y\not = 0$ implies $\tilde \kappa_\alpha(x) > 1$.
We also have 
\begin{equation}
\label{py-para}
p(y)^{2c} = \tilde \kappa_\alpha(x)^{2c}p(x)^{2c} = \tilde \kappa_\alpha(x)^{2c}\left(1-\frac{1}{\tilde \kappa_\alpha(x)}\right).
\end{equation}
This equality indicates that $p(y) = p(S(x)) < \tilde \kappa_\alpha(x)$ holds for all $x\in \mathcal{D}$.

\begin{lem}[\cite{MT2020_2}]
\label{lem-zero-compactification}
Let $F(\kappa; R) \bydef \kappa^{2c} - \kappa^{2c-1} - R^{2c}$ for $R\geq 0$.
Then, for any $R\geq 0$, there is a unique $\kappa = q(R)$ satisfying $q(0) = 1$ such that $F(q(R); R) \equiv 0$.
Moreover, $q(R) > 1$ holds for all $R>0$ and $q(R)$ is smooth with respect to $R\geq 0$.
\end{lem}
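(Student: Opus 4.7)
The plan is to treat this as a one-variable root-finding problem for the polynomial $F(\cdot\,; R)$ at each fixed $R$, establish existence and uniqueness on the branch $\kappa \geq 1$, and then upgrade pointwise solvability to $C^\infty$-dependence on $R$ via the implicit function theorem.

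First, I would fix $R \geq 0$ and study the one-variable map $g(\kappa) \bydef \kappa^{2c} - \kappa^{2c-1}$ on the half-line $[1,\infty)$. Since
\[
g(1) = 0, \qquad g(\kappa) \to \infty \text{ as } \kappa \to \infty, \qquad g'(\kappa) = \kappa^{2c-2}\bigl(2c\kappa - (2c-1)\bigr),
\]
and since $g'(1) = 1$ with $g'$ strictly increasing on $[1,\infty)$, we obtain $g'(\kappa) \geq 1$ for every $\kappa \geq 1$. Hence $g\colon [1,\infty) \to [0,\infty)$ is a smooth strictly increasing bijection, so the equation $g(\kappa) = R^{2c}$ has a unique solution $\kappa = q(R) \in [1,\infty)$, and this choice satisfies $q(0) = 1$ together with $q(R) > 1$ for every $R > 0$ (since then $R^{2c} > 0 = g(1)$). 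This already gives the existence, uniqueness on the branch $\kappa \geq 1$, and the sign statement.

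For smoothness, I would apply the implicit function theorem to $F(\kappa; R) = g(\kappa) - R^{2c}$ on a neighborhood of each point $(q(R_0), R_0)$ with $R_0 \geq 0$. Because $q(R_0) \geq 1$, the partial derivative $\partial_\kappa F(q(R_0); R_0) = g'(q(R_0)) \geq 1$ is bounded below away from zero, and $F$ is jointly polynomial (hence real-analytic) in $(\kappa, R)$. The IFT therefore furnishes a $C^\infty$ local solution branch agreeing with $q$ at $R_0$; by its uniqueness this branch must coincide with $q$ on a neighborhood, giving $q \in C^\infty([0,\infty))$.

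I do not anticipate a serious obstacle. The only mild care needed is at $R = 0$, where $F(\cdot\,;0) = \kappa^{2c-1}(\kappa-1)$ also admits the spurious root $\kappa = 0$; restricting a priori to the half-line $[1,\infty)$ (which is invariant under the implicit branch because of the uniform estimate $g' \geq 1$ there) selects the correct root and reconciles the IFT construction with the normalization $q(0) = 1$.
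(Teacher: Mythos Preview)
Your argument is correct. The paper does not actually supply its own proof of this lemma; it is quoted from \cite{MT2020_2} and stated without proof, so there is nothing in the present paper to compare against. Your approach---showing that $g(\kappa)=\kappa^{2c}-\kappa^{2c-1}$ is a strictly increasing bijection from $[1,\infty)$ onto $[0,\infty)$ via the derivative bound $g'(\kappa)\ge 1$, and then invoking the implicit function theorem with $\partial_\kappa F = g' \ge 1$ for smoothness---is the natural one and is essentially what the cited reference does.
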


Now we have $\tilde \kappa_\alpha(x)$ satisfies $F(\tilde \kappa_\alpha(x);p(y)) = 0$.
By the uniqueness of $\kappa(y) = q(R)$ with respect to $R=p(y)$, for any $y\in \mathbb{R}^n\setminus \{{\bf 0}\}$, $\kappa(y) = \kappa_{para}(y) \equiv \kappa(S(x)) \bydef \tilde \kappa_\alpha(x)$ is well-defined.
\KMa{As a consequence, the mapping $S$ admits the inverse $S^{-1} = T \equiv T_{para}$, which yields the following definition.}

\begin{dfn}[\KMa{Parabolic-type} compactification, \cite{MT2020_2}]\rm
\label{dfn-quasi-para}
Let the type $\alpha = (\alpha_1,\ldots, \alpha_n)\in \mathbb{Z}_{>0}^n$ fixed.
Let $\{\beta_i\}_{i=1}^n$ and $c$ be a collection of natural numbers satisfying (\ref{LCM}).
Define $T_{para}:\mathbb{R}^n\to \mathcal{D}$ as
\begin{equation*}
T_{para}(y) \bydef x,\quad x_i = \frac{y_i}{\kappa_{para}(y)^{\alpha_i}},
\end{equation*}
where $\kappa = \kappa_{para}(y) = \tilde \kappa_\alpha(x)$ is the unique zero of $F(\kappa; p(y))=0$ obtained in Lemma \ref{lem-zero-compactification}.
We say the map $T_{para}$ the {\em \KMa{parabolic-type} compactification (of type $\alpha = (\alpha_1,\ldots, \alpha_n)$)}.
The map $T_{para}$ maps $\mathbb{R}^n$ onto $\mathcal{D}$.
The boundary $\mathcal{E} \equiv \partial \mathcal{D} = \{x\in \mathbb{R}^n \mid p(x) = 1\}$ is called the {\em horizon}.
\end{dfn}

Similar to directional and \KMa{the} Poincar\'{e}-type ones, we apply the parabolic\KMa{-type} compactification of the type $\alpha$ which is the same as that of $f$ to transforming (\ref{ODE-original}).
The resulting vector field is 
\begin{equation*}
\frac{dx_i}{dt} =  \tilde f_i(x) - \alpha_i x_i \sum_{j=1}^n (\nabla \kappa)_j \kappa^{\alpha_j - 1}\tilde f_j(x),
\end{equation*}
where $\tilde f = (\tilde f_1,\ldots, \tilde f_n)$ is (\ref{f-tilde-Poincare}) replacing $\kappa$ by $\kappa_{para}$, in which case
\begin{equation*}
(\nabla_y \kappa(y))_j  = \frac{y_j^{2\beta_j-1}}{\alpha_j\kappa(y)^{2c-1} \left(1- \frac{2c-1}{2c} \kappa(y)^{-1}\right)}.
\end{equation*}

\KMa{
Similar to the Poincar\'{e}-type case, all components of the transformed vector field are $O(\kappa^k)$ as $p(y) \to \infty$, equivalently as $x$ approaches to $\mathcal{E}$, and hence the uniform time-scale transformation can be introduced to desingularize the vector field on $\mathcal{E}$.
}
\begin{dfn}[Time-variable desingularization: the parabolic-type version]\rm 
Define the new time variable \KMa{$\tau_{para}$} by
\begin{equation}
\label{time-desing-para}
d\tau_{para} = (1-p(x)^{2c})^{-k}\left\{1-\frac{2c-1}{2c}(1-p(x)^{2c}) \right\}^{-1}dt,
\end{equation}
equivalently,
\begin{equation}
\label{time-desing-para-integral}
t = t_0 + \int_{\tau_0}^\tau \left\{1-\frac{2c-1}{2c}(1-p(x(\tau_{para}))^{2c}) \right\}(1-p(x(\tau_{para}))^{2c})^k d\tau_{para},
\end{equation}
where $\tau_0$ and $t_0$ denote the correspondence of initial times\KMa{.}
We shall call (\ref{time-desing-para}) {\em the time-variable desingularization (of order $k+1$)}.
\end{dfn}
The change of coordinate and the above desingularization yield the following vector field \KMa{$g_{para}$}, which is continuous on $\overline{\mathcal{D}} = \{p(x) \leq 1\}$:
\begin{align}
\label{desing-para}
\dot x_i = g_{para,i}(x) \bydef \left(1-\frac{2c-1}{2c}(1-p(x)^{2c}) \right)\tilde f_i(x) - \alpha_i x_i \sum_{j=1}^n \frac{x_j^{2\beta_j - 1}}{\alpha_j}\tilde f_j(x) ,
\end{align}
\KMa{The desingularized vector field $g_{para}$ has the very similar form to $g_{qP}$. 
On the other hand, the algebraic structure of $\kappa$ is quite different from each other. 
In particular, \KMa{$\kappa = \kappa_{para}$} does not include radicals in $x$, and hence the smoothness of $f$ and the asymptotic quasi-homogeneity guarantee the smoothness of the right-hand side $g_{para}$ of (\ref{desing-para}) including the horizon \KMa{$\mathcal{E}$}.
See \cite{MT2020_1} for details.
This property yields a relaxation of conditions for characterizing blow-ups.
}

\begin{thm}[Stationary blow-up: the parabolic-type version, cf. \cite{Mat2018}, \cite{MT2020_1}]
\label{thm:blowup-parabolic}
Consider the desingularized vector field $g_{para}$ associated with (\ref{ODE-original}) given by (\ref{desing-para}).
Assume that $g_{para}$ has an equilibrium on the horizon ${\bf x}_\ast\in \mathcal{E}$.
Also, suppose that ${\bf x}_\ast$ is hyperbolic with $n_s > 0$ (resp. $n_u = n-n_s$) eigenvalues of $Dg_{para}({\bf x}_\ast)$ with negative (resp. positive) real parts.
If there is a solution $y(t)$ of (\ref{ODE-original}) with a bounded initial \KMa{point} $y(0)$ whose image $x = T_{para}(y)$ is on the local stable manifold $W^s_{\rm loc}({\bf x}_\ast; g_{para})$, then $t_{\max} < \infty$ holds; namely, $y(t)$ is a blow-up solution.
Moreover,
\begin{equation*}
\KMa{p(y(t))} \sim c(t_{\max} - t)^{-1/k}\quad \text{ as }\quad t\to t_{\max}
\end{equation*}
where $c>0$ is a constant.
Finally, if the $j$-th component ${\bf x}_\ast$ is not zero, then we also have
\begin{equation*}
y_i(t) \sim c_i(t_{\max} - t)^{-\alpha_i /k}\quad \text{ as }\quad t\to t_{\max},
\end{equation*}
where $c_i$ is a constant with the same sign as $y_i(t)$ as $t\to t_{\max}$.
\end{thm}
\KMa{
The proof is essentially the same as Theorem \ref{thm:blowup-Poincare}.
Indeed, only the admissible nature (discussed in \cite{MT2020_1}) of $T_{para}$ is used to prove $t_{\max}<\infty$, which is the same as $T_{qP}$.
}

\par
\bigskip
The key point of \KMa{our characterization of blow-ups (Theorems \ref{thm:blowup-dir}, \ref{thm:blowup-Poincare} and \ref{thm:blowup-parabolic})} is that {\em blow-up solutions for (\ref{ODE-original}) are characterized as trajectories on (local) stable manifolds of invariant sets\footnote{
Hyperbolicity ensures not only blow-up behavior of solutions but their asymptotic behavior with the specific form.
Several case studies of blow-up solutions beyond hyperbolicity are shown in \cite{Mat2019}.
} on the horizon $\mathcal{E}$ for desingularized vector fields}.
Computations of blow-up solutions are therefore reduced to \KMa{those of local stable manifolds} of invariant sets, such as (hyperbolic) equilibria, for the associated vector field.
\KMa{Although the above theorems only characterizes the existence and {\em local} dynamical nature of blow-up solutions, combinations of our characterization with numerical computations and computer-assisted proofs provide {\em global} nature of blow-up solutions in the phase space.}


\subsection{Remark on appropriate choice of compactifications}
\label{rem-choice-compactification}

We have introduced three compactifications in this section.
Each compactification has its own set of advantages and disadvantages, which depend on our requirements.
Here we remark the choice of compactifications in case that the original vector field $f$ is polynomial\footnote{
This assumption is not essential but just for simplifications to show advantages and disadvantages of each compactification.
}.
In our examples (Sections \ref{sec:demo1}, \ref{sec:demo2} and \ref{sec:demo3}), all these compactifications are applied. 
It is worth mentioning several features of each compactification towards effective choice and applications of our machineries to practical and advanced problems.

\subsubsection{Geometric interpretations of compactifications}
\KMa{
First the geometric interpretation of each compactification is briefly summarized.
Directional compactifications are not actually compactifications in the topological sense, while these are still called \lq\lq compactifications" because these are inclusively discussed in the context of compactifications for applications.
In fact, images of directional compactifications are interpreted as the tangent space of the Poincar\'{e}'s hemisphere considered in the Poincar\'{e}-type compactifications at points on the horizon, as shown in Figure \ref{fig:compactification}-(a).
\par
Global (Poincar\'{e}-type and parabolic type) compactifications are geometrically simple in the homogeneous case $\alpha = (1,\ldots, 1)$, in which case $p(y) = \|y\|$ and we can choose $\beta = (1,\ldots, 1)$ and $c=1$. 
Therefore $\kappa_{qP}(y) = (1+ \|y\|^2)^{1/2}$, which is a well-known (global, but homogeneous) compactification\footnote{
In many references, this compactification is called the {\em Poincar\'{e} compactification}.
The quasi-homogeneous counterpart is introduced in \cite{Mat2018} where the corresponding mapping $T = T_{qP}$ is called the {\em quasi-Poincar\'{e} compactification}.
}, and the resulting mapping $T_{qR}$ is the embedding of $\mathbb{R}^n$ into the Poincar\'{e} hemisphere
\begin{equation*}
\mathcal{H} = \{(x_1, \ldots, x_n, z)\in \mathbb{R}^{n+1} \mid \|x\|^2 + z^2 = 1,\, z > 0\}.
\end{equation*}
A homogeneous compactification of this kind is shown in Figure \ref{fig:compactification}-(b).
\par
The geometric nature of the parabolic-type compactification \KMa{with} $\alpha = (1,\ldots, 1)$ \KMa{is also understood in a simple way}, in which case $T_{para}$ is defined as
\begin{equation*}
x_i = \frac{2y_i}{1+\sqrt{1+4\|y\|^2}}\quad \Leftrightarrow \quad y_i = \frac{x_i}{1-\|x\|^2},\quad i=1,\ldots, n.
\end{equation*}
See \cite{EG2006, G2004} for the homogeneous case, which is called the {\em parabolic compactification}.
In particular, the parabolic compactification is the embedding of $\mathbb{R}^n$ onto the bounded parabola 
\begin{equation*}
\left\{(x_1,\ldots, x_{n+1})\in \mathbb{R}^{n+1} \mid \sum_{i=1}^n x_i^2 = x_{n+1},\, x_{n+1} < 1\right\}
\end{equation*}
in $\mathbb{R}^{n+1}$ with the focus point $(x_1,\ldots, x_n, x_{n+1}) = (0,\ldots, 0,1)$.
The map $T_{para}$ is actually defined as the composition of this embedding and the projection onto the first $n$ components, which is shown in Figure \ref{fig:compactification}-(c).
\par
\bigskip
Our compactificaitons introduced here are quasi-homogeneous counterparts of the above homogeneous compactifications.
Geometric pictures of quasi-homogeneous Poincar\'{e}-type and parabolic-type compactifications are shown in \cite{Mat2018} and \cite{MT2020_1}, respectively.
}

\begin{rem}
\label{rem-comp-anal}
\KMa{Any compactifications we have introduced are} analytic at any point in \KMa{$\mathcal{D}$} by using the binomial theorem in the standard calculus and the inverse function theorem for analytic mappings (e.g. \cite{D1960}).
\end{rem}

\begin{figure}[h!]\em
\begin{minipage}{0.32\hsize}
\centering
\includegraphics[width=4.5cm]{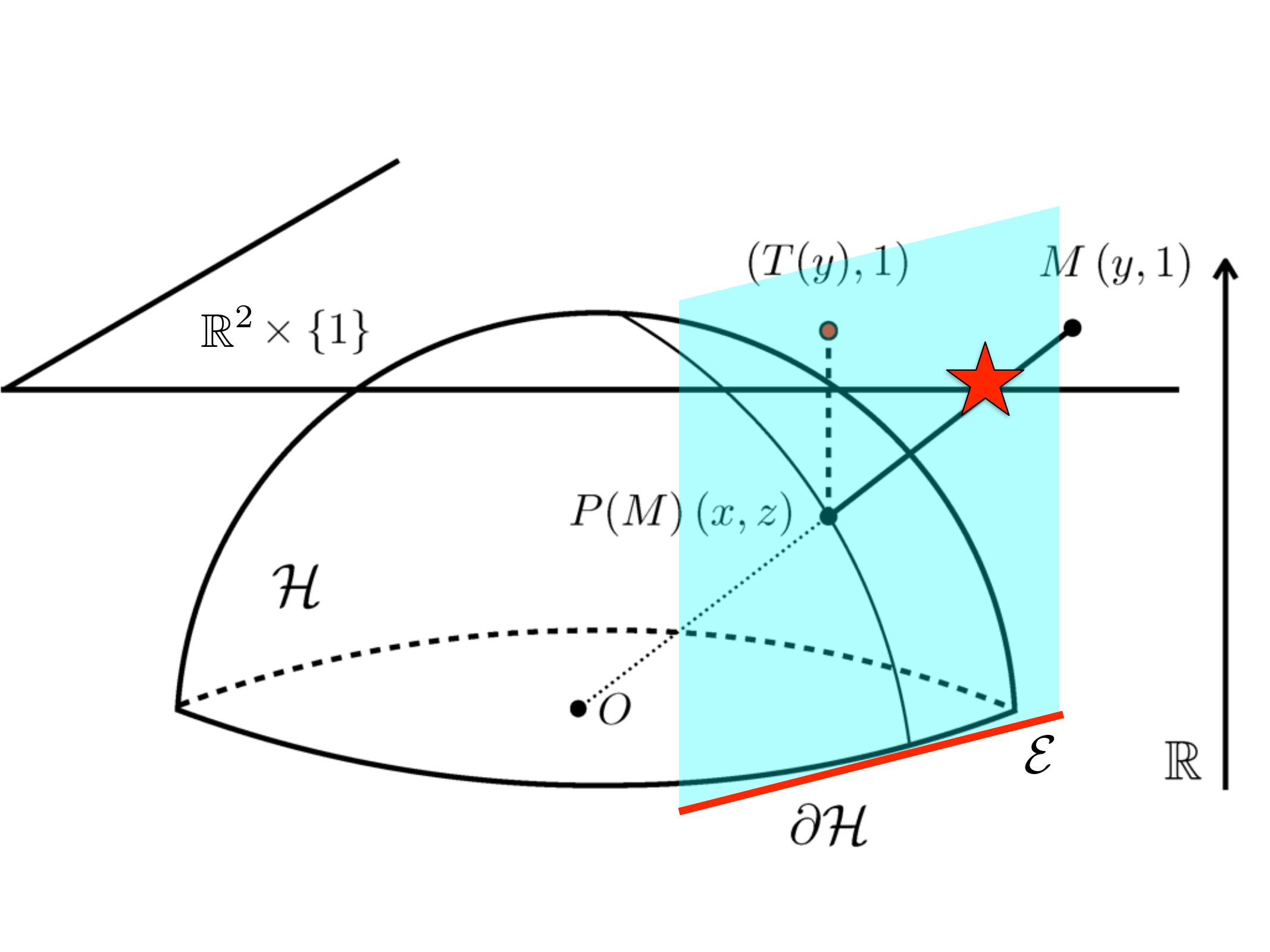}
(a)
\end{minipage}
\begin{minipage}{0.32\hsize}
\centering
\includegraphics[width=4.5cm]{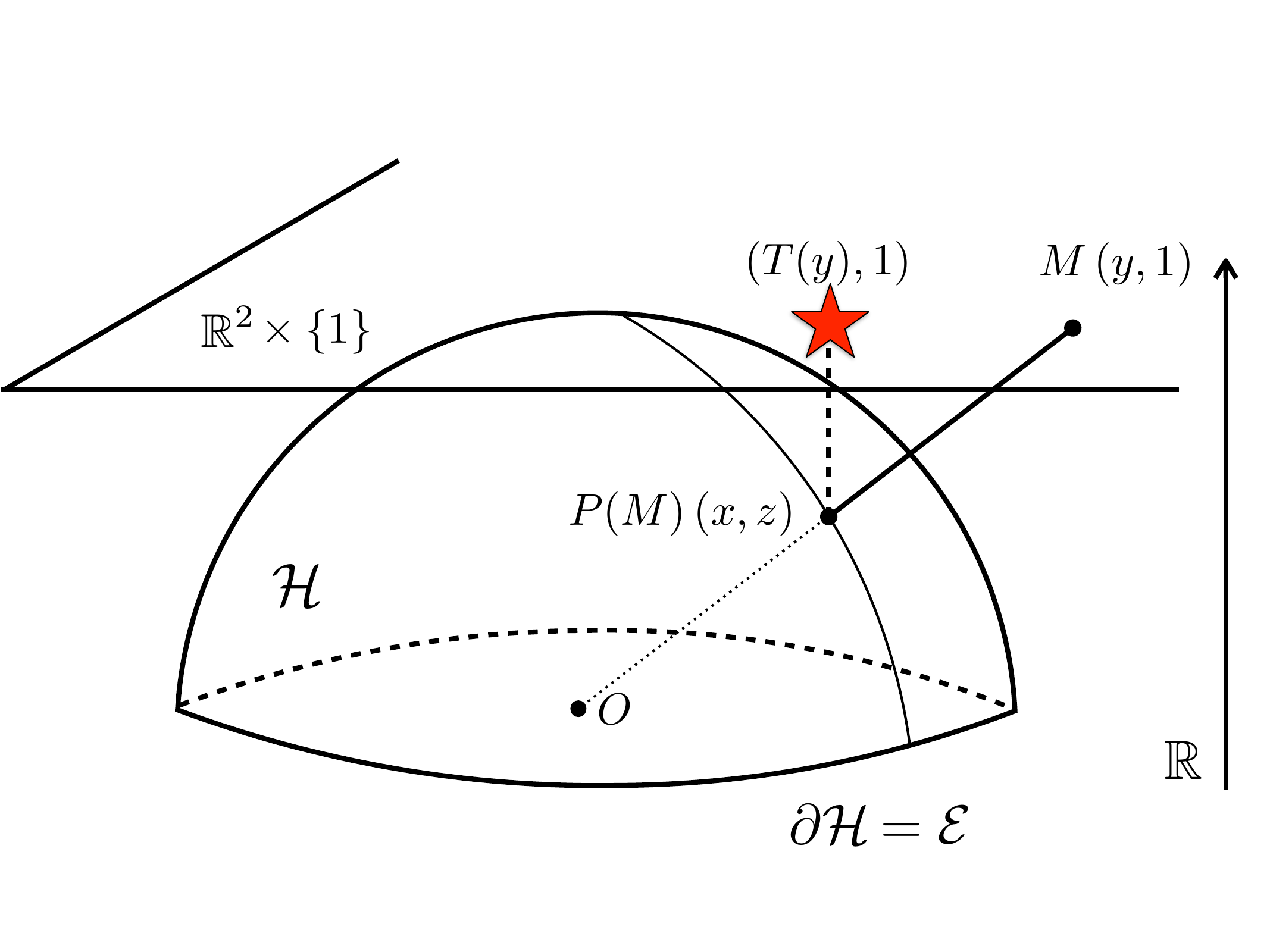}
(b)
\end{minipage}
\begin{minipage}{0.32\hsize}
\centering
\includegraphics[width=4.5cm]{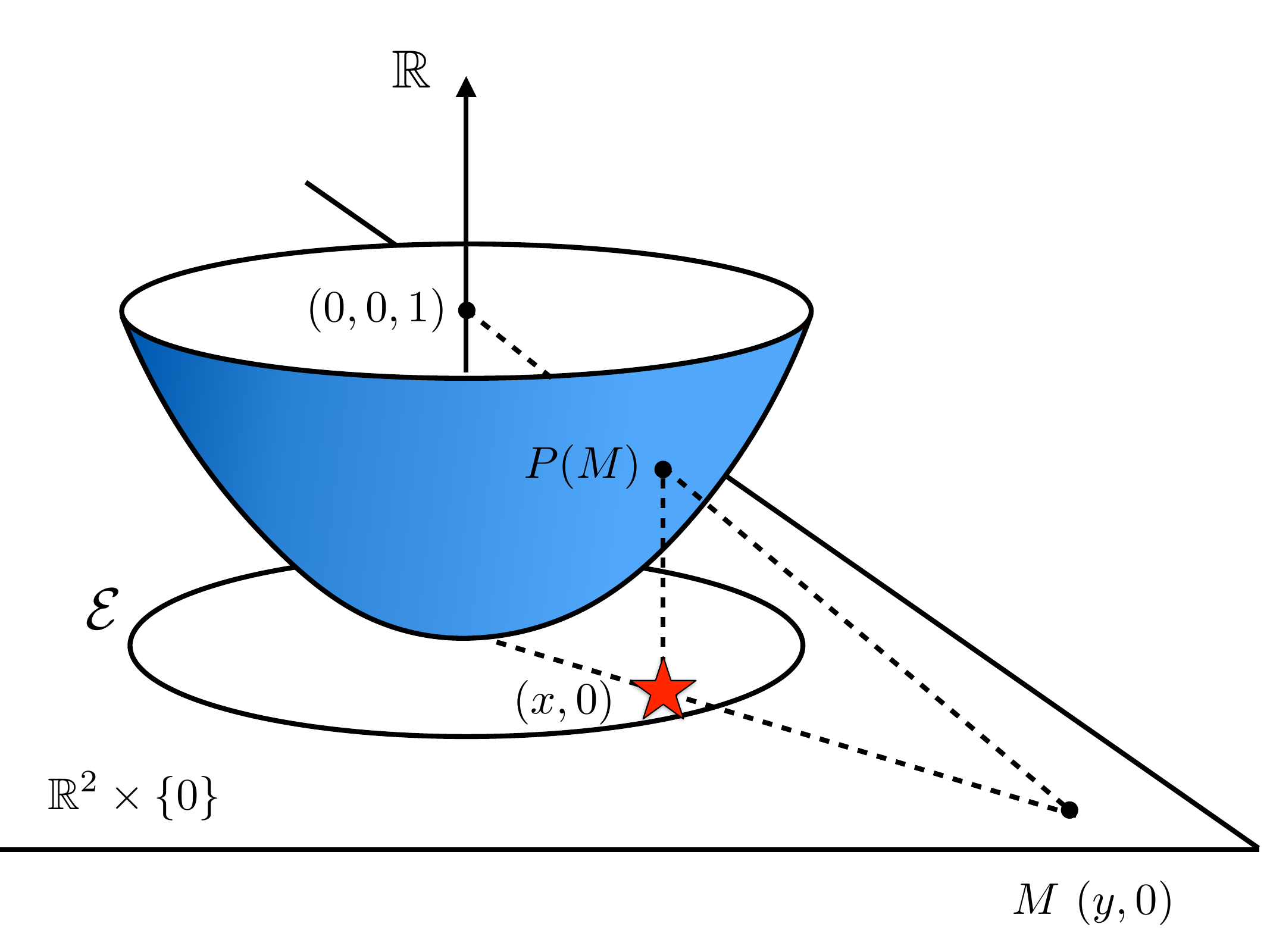}
(c)
\end{minipage}
\caption{\KMa{Schematic illustrations of} homogeneous compactifications of $\mathbb{R}^2$}
\label{fig:compactification}
(a): Directional compactification with type $\alpha = (1,1)$.
The original point $M = (y, 1) \in \mathbb{R}^2\times \{1\}$ is mapped into the point (drawn as the red star) on the upper-half tangent space (colored by skyblue) of a point on $\partial \mathcal{H}$, where $\mathcal{H}$ is the Poincar\'{e}'s hemisphere determining the Poincar\'{e} compactification.
The boundary $\mathcal{E}$ of the upper-half tangent space (the red line) is the horizon.
\par
(b): Poincar\'{e} compactification with type $\alpha = (1,1)$.
The image $T(y)$ of the original point $y\in \mathbb{R}^2$ is defined as the projection of the intersection point $P(M)\in \mathcal{H}$, given by the line segment connecting $M = (y,1)\in \mathbb{R}^3$ and the origin $O\in \mathbb{R}^3$, onto the original phase space $\mathbb{R}^2$.
The horizon is identified with $\partial \mathcal{H}$. 
The precise definition is its projection onto $\mathbb{R}^2 \times \{1\}$.
\par
(c): Parabolic compactification with type $\alpha = (1,1)$.
The image $x$ of the original point $y\in \mathbb{R}^2$ is defined as the projection of the intersection point $P(M)\in \mathcal{H}$ determined by the paraboloid $x_1^2 + x_2^2 = x_3$ in $\mathbb{R}^3$ and the line segment connecting $M = (y,0)\in \mathbb{R}^3$ and the focus point $(0,0,1)\in \mathbb{R}^3$, onto the original phase space $\mathbb{R}^2$.
The horizon is identified with the circle $\{x_1^2 + x_2^2 = 1\}$ on the parabola. 
The precise definition is its projection onto $\mathbb{R}^2\times \{0\}$.
\end{figure}

\subsubsection{Directional compactifications: advantages and disadvantages}
A typical way to study dynamics at infinity is the application of directional compactifications introduced in Section \ref{sec:dir}, which is simple in the sense that the magnitude of points in the original coordinate can be measured by an independent variable $s$.
Heuristically, associated desingularized vector fields are as complex as the original vector fields \KMa{because} the new variable $\hat x_i$ in (\ref{dir-cpt}) depends only on the original variable $y_i$ and the scaling variable $s$. 
Moreover, $\hat x_i$ is proportional to $y_i$.
Characterization of blow-up times is also simple, \KMa{because} they are characterized only by the asymptotic behavior of $s = s(\tau)$.
On the other hand, directional compactifications are defined only {\em locally}.
If our interested blow-up solutions have sign-changing structure, multiple charts of compactifications can be  necessary for complete descriptions of blow-up solutions.
From the numerical viewpoint, change of coordinates may cause additional computation costs and errors.
If {\em one already knows from preceding mathematical or numerical arguments that targeting blow-up solutions have identical signs during time evolutions for a certain component}, directional compactifications 
with appropriate choice of the constant-sign components are efficient.

\subsubsection{Global compactifications: advantages and disadvantages}
If we study blow-up solutions with sign-changing structure, or one does not have sufficient knowledge of \KMa{solutions} near infinity, globally defined compactifications like the Poincar\'{e}-type and the parabolic-type are more appropriate than directional ones, \KMa{because} one does not suffer from violation of integrations of differential equations due to the change of signs, or change of local charts.
\KMa{Because} the horizon, topologically sphere-shaped boundary of the compactified space, is invariant under associated desingularized vector fields \KMa{(cf. \cite{Mat2018})}, {\em \KMa{computed trajectories through points in $\mathcal{D}$ for desingularized vector fields are always inside $\overline{\mathcal{D}}$}, unless unrealistic or mathematically inappropriate choice of numerical parameters}.
On the other hand, application of such global compactifications generally increases the \KMa{degree} of associated desingularized vector fields as polynomial ones, which cause complication of arguments.
For example, in the case of the vector field shown in Section \ref{sec:demo3}, we have to study (desingularized) polynomial vector fields with \KMa{degree} over $10$, while the original one before compactification \KMa{has degree} at most $2$ or $3$.
Without systematic implementations of vector fields or their derivatives like automatic differentiations, applications to concrete systems require lengthy calculations.

\subsubsection{Poincar\'{e}-type or parabolic-type ?}
\label{sec:choice-compactification}

Among globally defined compactifications, more than one compactifications are introduced here, the Poincar\'{e}-type and the parabolic-type.
The simplest one in the class of {\em admissible} compactifications \KMa{(e.g., \cite{EG2006, MT2020_1})} is the Poincar\'{e}-type, which is easy to understand from geometric viewpoints and widely applied in many fields of mathematics.
However, the Poincar\'{e}-type compactification has an unavoidable defect, the presence of {\em radicals} in the definition.
Radicals generally lose the smoothness of desingularized vector fields on the horizon.
In other words, desingularized vector fields under the Poincar\'{e}-type compactification are $C^0$ but not $C^1$ in general around the horizon.
Therefore typical \lq\lq linear stability analysis" in the theory of dynamical systems does not always make sense on the horizon.
Nevertheless, it should be noted that there is an exception where the Poincar\'{e}-type compactifications can be applied without losing the smoothness of resulting vector fields, which is the case if $f$ is {\em quasi-homogeneous (not only in the asymptotic sense), or the residual term $f - f_{\alpha, k}$ has sufficiently low degree}. 
In this case, the associated desingularized vector field is also smooth and hence no obstruction of $C^1$ smoothness on the horizon arises.
Details are discussed in \cite{Mat2018}.
\par
Although the \KMa{degree} of polynomials significantly increases when we apply the parabolic-type compactifications, we do not worry about the lack of smoothness of desingularized vector fields.
Indeed, {\em parabolic-type transformations of the present type originally transforms rational functions into rational ones}, unlike the Poincar\'{e}-type ones (cf. \cite{G2004}).
We thus do not \KMa{suffer from obstructions} to consider dynamics at infinity when we apply parabolic-type compactifications.

\subsubsection{\KMa{The other choice ?}}
The geometrically simplest compactification would be the one-point compactifications such as embedding of $\mathbb{R}^n$ into $S^n$, which is known as the {\em Bendixson's compactification}.
One can use the Bendixson's compactification to map the infinity to a bounded point, where the corresponding dynamics possess the high degeneracy in general (e.g. \cite{H2010}).
In order to avoid the degeneracy at infinity, we have to apply an additional desingularization (blowing-up) of the infinity.
The Poincar\'{e}-type and the parabolic-type compactifications avoid such extra tasks for obtaining desingularized dynamics at infinity.

\section{Preliminary 2: Parameterization method}

\label{sec:parameterization_method}

In this section, we introduce the theory of the parameterization method \cite{MR1976079,MR1976080,MR2177465} to compute rigorous charts of local stable and unstable manifolds of fixed points of ODEs of the form $\dot x = g(x)$, where $g$ is a desingularized vector field. We begin by making some assumptions, which will be sufficient for the purpose of the present paper. 

\begin{itemize}
\item[A1.] Assume $g:\R^n \to \R^n$ is a polynomial vector field with a steady state $\tx \in \R^n$ (i.e. $g(\tx)=0$).
\item[A2.] Assume that the eigenvalues of the Jacobian matrix $Dg(\tx)$ are real, nonzero and distinct (hence the Jacobian matrix $Dg(\tx)$ is diagonalizable over the real and $\tx$ is hyperbolic). 
\end{itemize} 

Denote by $\lambda_1,\dots,\lambda_m<0$ the {\em stable eigenvalues} of $Dg(\tx)$ with $\xi_1,\dots,\xi_m \in \R^n$ some associated {\em stable eigenvectors}.
From now on, we focus on the computation of a local stable manifold, which we denote by $W^s_{\text{loc}}(\tx)$, and note that $ \dim W^s_{\text{loc}}(\tx) = m \le n$. The computation of the unstable manifold is similar (e.g. see \cite{BLM2016}). The idea of the computational approach is to represent the chart of the local stable manifold using a Taylor series representation $P\colon B^m \to \R^n$ of the form 
\begin{equation} \label{eq:parameterization_multi-indices_taylor}
P(\theta) = \sum_{|\alpha|=0}^\infty a_\alpha \theta^\alpha, \qquad a_\alpha \in \R^n,
\end{equation}
where $B^m \subset \R^m$ is a domain (usually chosen to be a ball) on which the Taylor series converges, and where $\alpha = (\alpha_1,\dots,\alpha_m) \in \mathbb{N}^m$, $|\alpha| = \alpha_1 + \dots + \alpha_m$, $\theta=(\theta_1,\dots,\theta_m) \in \R^m$ and $\theta^\alpha = \theta_1^{\alpha_1} \cdots \theta_m^{\alpha_m}$. This requires making an extra assumption, which involves the notion of a resonance. 
\begin{dfn}
{\em
The eigenvalues $\lambda_1,\dots,\lambda_m$ are said to have a {\em resonance} of order $\alpha = (\alpha_1,\dots,\alpha_m) \in \mathbb{N}^m$ if
\begin{equation} \label{eq:resonance}
\alpha_1 \lambda_1 + \dots + \alpha_m \lambda_m - \lambda_j = 0,
\end{equation}
for some $j \in \{ 1, \dots,m\}$ with $|\alpha| \ge 2$. If there are no resonances at any order $|\alpha| \ge 2$, then the eigenvalues $\lambda_1,\dots,\lambda_m$ are said to be {\em non-resonant}.
}
\end{dfn}

We are ready to state our third hypothesis. 

\begin{itemize}
\item[A3.] Assume that the eigenvalues $\lambda_1,\dots,\lambda_m$ are {\em non-resonant}
\end{itemize} 

%
Construct the following real-valued matrices: an $m \times m$ diagonal matrix with the diagonal entries made up of the stable eigenvalues
\begin{equation}
\label{eq:diagonalL}
\Lambda = \begin{pmatrix}
 \lambda_1 &  \ldots & 0  \\
 \vdots & \ddots  & \vdots  \\
 0  & \ldots  & \lambda_m  
\end{pmatrix}
\end{equation}
and an $n \times m$ matrix whose columns are the associated eigenvectors 
\[
 A_0 = [\xi_1 | \ldots | \xi_m].
\]

Using the  basis defined by the stable eigenvectors, the linearized equation for $\dot x = g(x)$ restricted to the stable subspace takes the form
\[
\dot{y} = \Lambda y,\quad y\in \R^m.
\]
The associated flow is given by $e^{\Lambda t}$.
As indicated above our goal is to construct an analytic function $P\colon B^m \to \R^n$ such that $P(B^m) = \KMa{W_{\rm loc}^s}(\tx)$.
To obtain constraints, so that we can solve for $P$, we begin by insisting that $P$ be a conjugacy between the flow $\varphi$ of $\dot x = g(x)$ restricted to $W_{\text{loc}}^s(\tx)$ and the flow $e^{\Lambda t}$ of the linear equation.
The most obvious restriction  is that $P$ must map fixed points to fixed points and hence
\[
P(0) = \tx.
\]
To obtain the conjugacy we assume that
\[
DP(0) = A_0
\]
and
\begin{equation} \label{eq:conjRelation}
\varphi \left( t, P(\theta) \right) = P\left( e^{\Lambda t} \theta \right),
\end{equation}
for all $\theta \in B^m$. 
The geometric meaning
of this conjugacy is illustrated in Figure \ref{fig:parmConj}.
To see that $P \left( B^m\right)\subset W^s_{\text{loc}}(\tx)$ observe that
\[
\lim_{t \to \infty} \varphi (t, P(\theta) ) = \lim_{t \to \infty} 
	P\left( e^{\Lambda t} \theta \right)
			= P\left(\lim_{t \to \infty} e^{\Lambda t} \theta \right) 
		    = P(0) 
		    = \tx,
\]
\KMa{because} the entries of $\Lambda$ are negative.

\begin{figure}[h!] 
\begin{center}
\includegraphics[width=10cm]{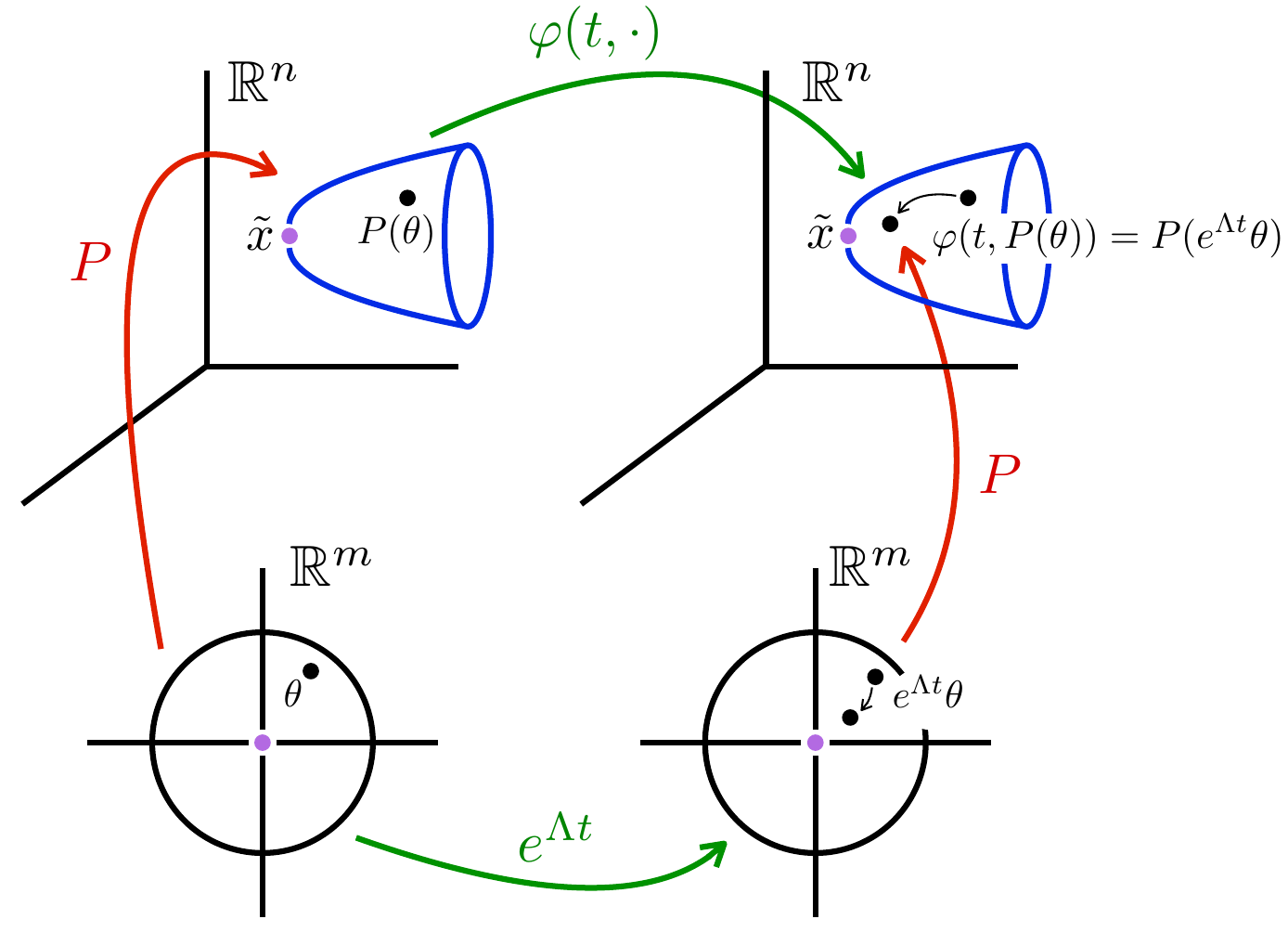}
\end{center}
\vspace{-.5cm}
\caption{\textbf{Schematic of the Parameterization Method for Vector Fields in $\R^n$:}
The figure illustrates the conjugacy described by Equation 
\eqref{eq:conjRelation}.  The bottom half of the figure 
represents the parameter space in $\R^m$ (the domain of the parameterization $P$)
while the top half of the figure represents the phase
space in $\R^n$.  The image of $P$ is the local stable manifold shown in blue.
The dynamics are depicted by moving from the left to the right side 
of the figure.  The dynamics in the parameter space is
generated by exponentiating the matrix of stable eigenvalues $\Lambda$. 
The dynamics in phase space is generated by the flow $\varphi$ associated with the vector field $g$.  The diagram {\em commutes} in the sense that applying first the chart map $P$ and then nonlinear flow $\varphi(t,\cdot)$ is required to be the same as applying the linear dynamics $e^{\Lambda t}$ and then the chart map $P$. The result is that the dynamics on the local stable manifold are described by the stable linear dynamics.} \label{fig:parmConj}
\end{figure}

Note that any function $P(\theta)$ satisfying Equation (\ref{eq:conjRelation}) is one-to-one on $B^m$.  
To see this observe that $P$ is tangent to the stable eigenspace at the origin as $DP(0) = A_0$.  
Moreover recall that $A_0$ is of full rank as its columns are linearly independent.  
By the implicit function theorem\KMa{,} $P$ is of rank $m$, and hence one-to-one, in some neighborhood $U\subset B^m$ of $0$.
Now suppose that $\theta_1, \theta_2 \in B^m$ and that $P(\theta_1) = P(\theta_2)$.  Then 
for any $t \in \mathbb{R}$, $\varphi \left( t, P(\theta_1) \right) = \varphi \left( t, P(\theta_2) \right)$
by the uniqueness of the initial value problem.  
Choose $T >0$  large enough so that 
$e^{\Lambda T} \theta_1, e^{\Lambda T} \theta_2 \in U$.
By the conjugacy relation we have that
$
P\left( e^{\Lambda t} \theta_1 \right) = P\left( e^{\Lambda t} \theta_2 \right),
$
and \KMa{because} the arguments are in $U$, the local immersion gives that $e^{\Lambda T} \theta_1= e^{\Lambda T} \theta_2$.  
But $e^{\Lambda T}$ is an isomorphism and we have $\theta_1 = \theta_2$. 
We \KMa{therefore} conclude from the discussion above that $P(B^m) = W_{loc}^s(\tx)$.

The utility of \eqref{eq:conjRelation} is limited by the appearance of the flow $\varphi$ in the equation. 
In practice the flow is only known implicitly, that is it is determined by solving the differential equation.  
The following lemma establishes a more practical infinitesimal version of  \eqref{eq:conjRelation}.

\begin{lem} \label{lem:parmLemma} 
Let  $P \colon B^m \subset \mathbb{R}^m \to \mathbb{R}^n$
be a smooth function with 
\begin{equation}\label{eq:parmLinearConstraints}
P(0) = \tx \qquad \text{and}\qquad DP(0) = A_0.
\end{equation}
Then $P(\theta)$ satisfies the conjugacy relationship (\ref{eq:conjRelation})
if and only if $P$ is a solution of the partial differential equation (PDE)
\begin{equation} \label{eq:parmFE}
\lambda_1 \theta_1 \frac{\partial}{\partial \theta_1} P(\theta_1, \ldots, \theta_m) 
+ \ldots +
\lambda_m \theta_m \frac{\partial}{\partial \theta_m} P(\theta_1, \ldots, \theta_m) 
= g(P(\theta_1, \ldots, \theta_m))
\end{equation}
for all $\theta = (\theta_1, \ldots, \theta_m) \in B^m$.
\end{lem}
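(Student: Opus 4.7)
The plan is to prove the biconditional by direct calculus using the chain rule on both sides of the conjugacy relation. This is a standard computation in the parameterization method; the only real subtlety is keeping track of the domain on which $e^{\Lambda t}\theta$ lies in $B^m$, which is automatic for $t\geq 0$ because $\Lambda$ has only negative eigenvalues so $e^{\Lambda t}$ contracts.

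For the forward implication, I will assume $\varphi(t,P(\theta))=P(e^{\Lambda t}\theta)$ for all $\theta\in B^m$ and all $t$ for which both sides are defined. Differentiate both sides with respect to $t$ at $t=0$. On the left, $\frac{d}{dt}\varphi(t,P(\theta))\big|_{t=0}=g(P(\theta))$ by definition of the flow of $\dot x=g(x)$. On the right, by the chain rule,
\begin{equation*}
\frac{d}{dt}P\bigl(e^{\Lambda t}\theta\bigr)\Big|_{t=0} = DP(\theta)\,\Lambda\theta = \sum_{i=1}^m \lambda_i\theta_i\,\frac{\partial P}{\partial \theta_i}(\theta),
\end{equation*}
since $\Lambda$ is the diagonal matrix in \eqref{eq:diagonalL}. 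Equating the two yields the PDE \eqref{eq:parmFE}. The linear constraints $P(0)=\tx$ and $DP(0)=A_0$ are forced by evaluating the conjugacy and its derivative at $\theta=0$ (using $\varphi(t,\tx)=\tx$ and that $DP(0)$ conjugates the linearization to $\Lambda$).

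For the converse, assume $P$ satisfies \eqref{eq:parmLinearConstraints} and the PDE \eqref{eq:parmFE}. Fix $\theta_0\in B^m$ and define $u\colon[0,\infty)\to\R^n$ by $u(t)\bydef P(e^{\Lambda t}\theta_0)$. Since the stable eigenvalues are negative, $e^{\Lambda t}\theta_0\in B^m$ for all $t\geq 0$ (shrinking toward the origin), so $u$ is well defined and smooth. Differentiating and applying the PDE pointwise at $e^{\Lambda t}\theta_0$,
\begin{equation*}
\dot u(t) = DP\bigl(e^{\Lambda t}\theta_0\bigr)\,\Lambda\,e^{\Lambda t}\theta_0 = \sum_{i=1}^m \lambda_i \bigl(e^{\Lambda t}\theta_0\bigr)_i \frac{\partial P}{\partial\theta_i}\bigl(e^{\Lambda t}\theta_0\bigr) = g\bigl(P(e^{\Lambda t}\theta_0)\bigr) = g(u(t)).
\end{equation*}
With initial value $u(0)=P(\theta_0)$, uniqueness for the initial value problem $\dot x=g(x)$, $x(0)=P(\theta_0)$ gives $u(t)=\varphi(t,P(\theta_0))$, which is exactly the conjugacy relation \eqref{eq:conjRelation}.

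The main obstacle, such as it is, is domain bookkeeping: one should justify that $t\mapsto e^{\Lambda t}\theta_0$ stays inside $B^m$ so that $P$ may be evaluated along the linear flow, and similarly that $\varphi(t,P(\theta))$ does not escape the neighborhood in which uniqueness applies. Choosing $B^m$ to be a ball (or any forward-invariant set under $e^{\Lambda t}$) handles the first; the second follows from $P(B^m)\subset W^s_{\text{loc}}(\tx)$ together with the contractive nature of the stable flow. No further hypothesis is needed beyond smoothness of $P$ and $g$, so the lemma holds as stated.
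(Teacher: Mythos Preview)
Your proof is correct and follows essentially the same approach as the paper: differentiate the conjugacy at $t=0$ for the forward direction, and for the converse define $\gamma(t)=P(e^{\Lambda t}\theta)$, verify it solves the ODE via the PDE, and invoke uniqueness of solutions. The only superfluous remark is that $P(0)=\tx$ and $DP(0)=A_0$ are ``forced'' by the conjugacy---in the lemma these are hypotheses, not conclusions---but this does no harm.
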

\begin{proof}
Let $P \colon  B^m \to \mathbb{R}^n$ 
be a smooth function with $P(0) = \tx$ and $DP(0) = A_0$. \\ 
($\Longleftarrow$) Suppose that $P(\theta)$ solves the partial differential
equation \eqref{eq:parmFE} in $B^m$.  Choose a
fixed $\theta \in B^m$ and fix $t > 0$.  Define the function 
$\gamma \colon [0, t] \to \mathbb{R}^n$ by
\begin{equation}\label{e:defgamma}
\gamma(t) \bydef P\left( e^{\Lambda t} \theta \right).
\end{equation}
Then, $\gamma(0) = P(\theta)$ and 
\[
\gamma'(t) = \frac{d}{dt}  P\left( e^{\Lambda t} \theta \right)
   	= D P\left( e^{\Lambda t} \theta \right) \Lambda e^{\Lambda t} \theta
 	= g\left( P\left( e^{\Lambda t} \theta \right)\right)
 	= g ( \gamma(t)),
\]
where we pass from the first to the second equality by the chain rule, from the
second to the third equality by the invariance equation (\ref{eq:parmFE}) and the fact
that $e^{\Lambda t}\theta \in B^m$ when $t > 0$,
and from the third to the fourth equation by the definition of $\gamma$.
Hence $\gamma$ is the solution of the initial value problem
\begin{equation}\label{e:flowgamma}
\gamma'(t) = g(\gamma(t)) , 
\qquad \text{and} \qquad 
\gamma(0) = P(\theta).
\end{equation}
Therefore by definition $\varphi( t, \gamma(0)) =\gamma(t)$, and it follows from~\eqref{e:defgamma} and~\eqref{e:flowgamma} that 
\[
\varphi( t , P(\theta)) = P( e^{\Lambda t} \theta).
\]

($\Longrightarrow$) Suppose  that $P$ satisfies
the conjugacy relationship \eqref{eq:conjRelation}
for all $\theta \in B^m$. 
Fix $\theta \in B^m$ and differentiate both sides 
with respect to $t$ in order to obtain
\[
g(\varphi ( t , P(\theta) )) = DP( e^{\Lambda t} \theta) \Lambda e^{\Lambda t} \theta.
\]
Taking the limit as $t \to 0$ gives that $P(\theta)$ is a solution 
of \eqref{eq:parmFE}. 
\end{proof}

As a consequence of Lemma~\ref{lem:parmLemma}, it should now be clear that computing a local $m$-dimensional stable manifold is equivalent to find a solution $P\colon B^m \to \R^n$ of the PDE \eqref{eq:parmFE}. As mentioned earlier, the idea is to use a Taylor series representation of the form \eqref{eq:parameterization_multi-indices_taylor}. Note that since $g:\R^n \to \R^n$ is a polynomial vector field, the power series expansion of $g(P(\theta))$ involves Cauchy products. Denote the Taylor expansion of $g(P(\theta))$ as 
\[
g(P(\theta)) = \sum_{|\alpha|=0}^\infty \left( g(a) \right)_\alpha \theta^\alpha, \qquad j=1,\dots,n,
\]
where we abuse slightly the notation and used the same notation $g(a)$ to denote the vector field $g$ where the monomial terms in the variables $x_1,\dots,x_n$ are replaced by Cauchy products in the variables $a_1,\dots,a_n$.

Formally plugging the Taylor expansion \eqref{eq:parameterization_multi-indices_taylor} in the PDE \eqref{eq:parmFE}
results in 
\[
DP(\theta) \Lambda \theta = 
\sum_{|\alpha|=0}^\infty (\alpha \cdot \lambda )a_\alpha \theta^\alpha=\sum_{|\alpha|=0}^\infty \left( g(a) \right)_\alpha \theta^\alpha= g(P(\theta)),
\]
where $\alpha \cdot \lambda \bydef \alpha_1 \lambda_1 + \dots + \alpha_m \lambda_m$ and $a_\alpha = ((a_1)_\alpha,\dots,(a_n)_\alpha) \in \R^n$. The first order constraints \eqref{eq:parmLinearConstraints} imply that 
\[
a_0 = ((a_1)_0,\dots,(a_n)_0) = \tx \in \R^n \qquad \text{and} \qquad a_{e_j} = \xi_j \in \R^n \quad (j=1,\dots,m),
\]
where $e_j$ is the $j^{th}$ vector of the canonical basis of $\R^n$. In other words, the Taylor coefficients $a_\alpha$ for $|\alpha| \in \{0,1\}$ are fixed and do not need to be solved for. 

Computing the higher order Taylor coefficients $a_\alpha=((a_1)_\alpha,\dots,(a_n)_\alpha)$ (for $|\alpha| \ge 2$) of \eqref{eq:parameterization_multi-indices_taylor} reduces to find the solution of the zero finding problem $F(a)=0$, with $F$ given by 
\begin{equation} \label{eq:parameterization_F(a)=0}
(F(a))_\alpha \bydef (\alpha \cdot \lambda)a_\alpha - (g(a))_\alpha, \qquad |\alpha| \ge 2.
\end{equation}
Also denote, for $j=1,\dots,n$ and $|\alpha| \ge 2$,
\[
(F_j(a))_\alpha \bydef (\alpha \cdot \lambda)(a_j)_\alpha - (g_j(a))_\alpha, 
\]
so that we may write  $F(a) = (F_1(a),F_2(a), \dots, F_n(a))$.

\begin{rem}
When $|\alpha| \in \{0,1\}$, the constraints $(F(a))_\alpha=0$ correspond to finding the steady state ($|\alpha|=0$) and the stable eigenvalues/eigenvectors ($|\alpha|=1$). Since this information is already assumed to be at hand, we only need to solve for $(F(a))_\alpha=0$ for $|\alpha| \ge 2$.
\end{rem}

Denote the Banach space
\begin{equation} \label{eq:ell_one}
\ell^1 \bydef \left\{ b = (b_\alpha)_{| \alpha| \ge 2} : b_\alpha \in \R \text{ and } \|b\|_1 \bydef \sum_{|\alpha| =2}^\infty |b_\alpha| < \infty \right\}
\end{equation}
and the product Banach space $X \bydef (\ell^1)^n = \ell^1 \times \ell^1 \times \cdots \times \ell^1$ with induced norm
\begin{equation} \label{eq:X_norm}
\|a\|_X \bydef \max_{j=1,\dots,n} \| a_j \|_1.
\end{equation}
Moreover, denoting the Banach space
\begin{equation} \label{eq:t_ell_one}
\tilde \ell^1 \bydef \left\{ b = (b_\alpha)_{| \alpha| \ge 2} : b_\alpha \in \R \text{ and } \sum_{|\alpha| =2}^\infty |(\alpha \cdot \lambda)b_\alpha| < \infty \right\},
\end{equation}
and $X' \bydef (\tilde \ell^1)^n$, we get that $F:X \to X'$. 

Denote by $B_1^m \bydef \{ z = (z_1,\dots,z_m) \in \mathbb{C}^m : |z_k| \le 1, \text{ for all } k=1,\dots,m \}$ the unit polydisc in $\mathbb{C}^m$. 
We have the following result.

\begin{thm} \label{thm:equivalent_zero_finding}
Assume that Assumptions A1, A2 and A3 are satisfied. If there exists $\ta \in X$ such that $F(\ta)=0$ with $F$ given in \eqref{eq:parameterization_F(a)=0}, then the corresponding Taylor expansion $P:B_1^m \to \R^n$ given by
\begin{equation} \label{eq:parameterization_proof}
P(\theta) \bydef \tx + \sum_{k=1}^m \xi_k \theta_k + \sum_{|\alpha|=2}^\infty \ta_\alpha \theta^\alpha
\end{equation}
provides a parameterization of a local stable manifold of $\tx$, that is $P(B_1^m) = W^s_{\rm loc} (\tx)$.
\end{thm}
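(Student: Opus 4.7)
The plan is to reduce the theorem to an application of Lemma~\ref{lem:parmLemma}: I will verify that the formal Taylor series \eqref{eq:parameterization_proof} converges to an analytic function $P$ on $B_1^m$, that it meets the linear constraints \eqref{eq:parmLinearConstraints}, and that it satisfies the invariance PDE \eqref{eq:parmFE}. Once these are established, the lemma produces the conjugacy \eqref{eq:conjRelation}, and the discussion preceding the lemma (the implicit function theorem argument for local injectivity together with propagation of injectivity along the flow) yields that $P$ is one-to-one on $B_1^m$ with image contained in the stable set of $\tx$. This image is then by definition a parameterization of the local stable manifold $W^s_{\rm loc}(\tx)$.

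For the first step, since $\ta\in X = (\ell^1)^n$, the estimate $|\theta^\alpha|\le 1$ for every $\theta\in B_1^m$ and every multi-index $\alpha$ yields
\[
\sup_{\theta\in B_1^m}\sum_{|\alpha|\ge 2} \|\ta_\alpha\|\,|\theta^\alpha|\;\le\;\sum_{|\alpha|\ge 2}\|\ta_\alpha\|\;\le\;n\,\|\ta\|_X<\infty,
\]
so the series \eqref{eq:parameterization_proof} converges absolutely and uniformly on $B_1^m$. The limit $P$ is therefore real-analytic on the interior of $B_1^m$, and the identities $P(0)=\tx$ and $DP(0)=A_0=[\xi_1|\cdots|\xi_m]$ follow by reading off the terms of order $0$ and $1$ in the expansion.

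For the second step, I verify the PDE \eqref{eq:parmFE} coefficient-by-coefficient. The left-hand side of \eqref{eq:parmFE} has Taylor coefficients $(\alpha\cdot\lambda)\,a_\alpha$. The right-hand side $g(P(\theta))$ is a polynomial in the components of $P$, so it is analytic on $B_1^m$ and its Taylor coefficients are given precisely by $(g(a))_\alpha$ as defined through the Cauchy-product algebra on $\ell^1$ used to set up \eqref{eq:parameterization_F(a)=0}. The match of coefficients then splits into three cases. For $|\alpha|=0$, both sides vanish because $\tx$ is an equilibrium of $g$ by Assumption A1. For $|\alpha|=e_j$ with $j\in\{1,\dots,m\}$, the left-hand side contributes $\lambda_j \xi_j$ while the right-hand side contributes $Dg(\tx)\xi_j=\lambda_j\xi_j$, using that $(\lambda_j,\xi_j)$ is an eigenpair of $Dg(\tx)$ by Assumption A2. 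For $|\alpha|\ge 2$, the matching condition is exactly $(F(\ta))_\alpha=0$, which holds by hypothesis. By uniqueness of Taylor expansions of analytic functions, the PDE \eqref{eq:parmFE} is satisfied pointwise on $B_1^m$.

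For the final step, Lemma~\ref{lem:parmLemma} converts the PDE together with the linear constraints into the conjugacy $\varphi(t,P(\theta))=P(e^{\Lambda t}\theta)$, valid for every $\theta\in B_1^m$ and every $t\ge 0$ for which $e^{\Lambda t}\theta$ remains in $B_1^m$; since $\Lambda$ is negative-definite by the choice of stable eigenvalues, $B_1^m$ is forward-invariant under $e^{\Lambda t}$ and the conjugacy holds for all $t\ge 0$. The argument recorded between \eqref{eq:conjRelation} and Lemma~\ref{lem:parmLemma} then gives that $P$ is injective on $B_1^m$ (using the full-rank property of $A_0$ and the flow-conjugacy to propagate local injectivity near $0$ globally) and that $P(\theta)\to\tx$ under the forward flow for every $\theta\in B_1^m$, hence $P(B_1^m)\subseteq W^s(\tx)$. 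Because $P$ is an analytic immersion of the $m$-dimensional disc $B_1^m$ tangent to the stable eigenspace at $\tx$, its image is an $m$-dimensional analytic submanifold through $\tx$ contained in the stable set and tangent to $E^s$, i.e., a local stable manifold; taking this to be the definition of $W^s_{\rm loc}(\tx)$ (or, equivalently, intersecting with any sufficiently small neighborhood of $\tx$ provided by the classical stable manifold theorem) yields $P(B_1^m)=W^s_{\rm loc}(\tx)$, as claimed. The main technical subtlety is the coefficient-matching step, where one must verify that the formal Cauchy-product expression $(g(a))_\alpha$ used to build $F$ truly coincides with the Taylor coefficients of the composition $g\circ P$; this rests on $\ell^1$ being a Banach algebra under Cauchy multiplication, together with $g$ being a polynomial so that only finitely many Cauchy products appear in each component.
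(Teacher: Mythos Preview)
Your proof is correct and follows essentially the same route as the paper: establish absolute and uniform convergence of the series on $B_1^m$ from $\ta\in(\ell^1)^n$, verify that $P$ satisfies the linear constraints \eqref{eq:parmLinearConstraints} and the invariance PDE \eqref{eq:parmFE}, and then invoke Lemma~\ref{lem:parmLemma} together with the injectivity argument preceding it. Your treatment is in fact more explicit than the paper's, which simply asserts that $P$ satisfies \eqref{eq:parmFE} ``by construction''; your coefficient-by-coefficient verification (splitting into $|\alpha|=0$, $|\alpha|=1$, $|\alpha|\ge 2$ and noting that $\ell^1$ is a Banach algebra so the Cauchy-product coefficients genuinely compute the Taylor coefficients of $g\circ P$) spells out exactly what that phrase means.
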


\begin{proof}
Assume that $\ta \in X$ solves $F(\ta)=0$. Then by construction, the function $P(\theta)$ given in \eqref{eq:parameterization_proof} converges absolutely and uniformly on $B_1^m$ as for each $j \in \{1,\dots,n\}$
\begin{align*}
\sup_{z \in B_1^m} |P_j(z)| &\le |\tx_j| + \sup_{z \in B_1^m} \left|  \sum_{k=1}^m (\xi_k)_j z_k 
+ \sum_{|\alpha|=2}^\infty (\ta_j)_\alpha z_1^{\alpha_1} \cdots z_m^{\alpha_m} \right| 
\\
& \le |\tx_j| + \sup_{z \in B_1^m}  \sum_{k=1}^m |(\xi_k)_j | |z_k| 
+ \sup_{z \in B_1^m} \sum_{|\alpha|=2}^\infty |(\ta_j)_\alpha| |z_1|^{\alpha_1} \cdots |z_m|^{\alpha_m} 
\\
& \le |\tx_j| +  \sum_{k=1}^m |(\xi_k)_j |
+ \sum_{|\alpha|=2}^\infty |(\ta_j)_\alpha| 
\\
& = |\tx_j| +  \sum_{k=1}^m |(\xi_k)_j | + \|\ta_j\|_1 < \infty,
\end{align*}
since $\|\ta\|_1<\infty$. By construction, the function $P:B_1^m \to \R^n$ given in \eqref{eq:parameterization_proof} satisfies the first order constraints \eqref{eq:parmLinearConstraints} and the PDE \eqref{eq:parmFE}. By the Lemma~\ref{lem:parmLemma},  $P$ satisfies the conjugacy relationship (\ref{eq:conjRelation}). Finally, we conclude that $P:B_1^m \to \R^n$ provides a parameterization of a local stable manifold of $\tx$, that is $P(B_1^m) = W^s_{\rm loc} (\tx)$.
\end{proof}

The strategy to compute a parameterization of $W^s_{\rm loc} (\tx)$ is now clear. Fix the lengths of the eigenvectors $\xi_1,\dots,\xi_m$ such that we can compute $\ta \in X$ such that $F(\ta)=0$. This is achieved with a Newton-Kantorovich type argument, which we now state. 

Denote by $B_r(b) \bydef \{ x \in X : \| x - b \|_X \le r\}$ the closed ball of radius $r>0$ centered at a given $b \in X$, and $B(X_1,X_2)$ the space of bounded linear operators between two Banach spaces $X_1$ and $X_2$.

\begin{thm}[{\bf A Newton-Kantorovich type theorem}] \label{thm:radii_polynomials}
Let $X$ and $X'$ be Banach spaces, $A^{\dagger} \in B(X,X')$ and $A \in B(X',X)$ be bounded linear operators.
Assume $F \colon X \to X'$ is Fr\'echet differentiable at $\ba \in X$, $A$ is injective and $A F \colon X \to X.$
Let $Y_0$, $Z_0$ and $Z_1$ be nonnegative constants, and a function $Z_2:(0,\infty) \to (0,\infty)$ satisfying
\begin{align}
\label{eq:general_Y_0}
\| A F(\ba) \|_X &\le Y_0
\\
\label{eq:general_Z_0}
\| I - A A^{\dagger}\|_{B(X)} &\le Z_0
\\
\label{eq:general_Z_1}
\| A[DF(\ba) - A^{\dagger}] \|_{B(X)} &\le Z_1,
\\
\label{eq:general_Z_2}
\| A[DF(c) - DF(\ba)]\|_{B(X)} &\le Z_2(r) r, \quad \text{for all } c \in B_r(\ba),
\end{align}
where $\| \cdot \|_{B(X)}$ denotes the operator norm.  Define the radii polynomial by 
\begin{equation} \label{eq:general_radii_polynomial}
p(r) \bydef Z_2(r) r^2 - ( 1 - Z_1 - Z_0) r + Y_0.
\end{equation}
If there exists $r_0>0$ such that $p(r_0)<0$, then there exists a unique $\ta \in B_{r_0}(\ba)$ such that $F(\ta) = 0$.
\end{thm}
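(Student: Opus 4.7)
The plan is to recast $F(a) = 0$ as a fixed point problem for the Newton-like operator $T(a) \bydef a - AF(a)$ and then invoke the Banach fixed point theorem on the closed ball $B_{r_0}(\bar a)$. Two observations motivate this: the hypothesis $AF: X \to X$ makes $T$ a self-map of $X$, and since $A$ is injective any fixed point $\tilde a$ of $T$ satisfies $AF(\tilde a) = 0$ and hence $F(\tilde a) = 0$. So everything reduces to verifying that $T$ maps $B_{r_0}(\bar a)$ into itself and is a strict contraction there.

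The key estimate is a telescoping identity for $I - ADF(c)$ with $c \in B_r(\bar a)$: writing
\begin{align*}
I - ADF(c) = (I - AA^{\dagger}) + A(A^{\dagger} - DF(\bar a)) + A(DF(\bar a) - DF(c)),
\end{align*}
the bounds \eqref{eq:general_Z_0}, \eqref{eq:general_Z_1} and \eqref{eq:general_Z_2} combine to give the uniform operator-norm estimate $\| I - ADF(c) \|_{B(X)} \le Z_0 + Z_1 + Z_2(r) r$ for every $c \in B_r(\bar a)$. Applying the mean value representation
\begin{align*}
T(a) - \bar a = -AF(\bar a) + \Bigl( I - A\!\int_0^1 \!DF(\bar a + s(a - \bar a))\, ds \Bigr)(a - \bar a),
\end{align*}
together with \eqref{eq:general_Y_0} then yields $\| T(a) - \bar a \|_X \le Y_0 + (Z_0 + Z_1 + Z_2(r) r) r = p(r) + r$ for all $a \in B_r(\bar a)$.

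The assumption $p(r_0) < 0$ now does double duty. It immediately produces $\| T(a) - \bar a \|_X < r_0$ for every $a \in B_{r_0}(\bar a)$, establishing that $T$ maps $B_{r_0}(\bar a)$ into itself; and it forces $Z_0 + Z_1 + Z_2(r_0) r_0 < 1 - Y_0/r_0 < 1$, which combined with the same uniform bound on $\| I - ADF(c)\|_{B(X)}$ makes $T$ a strict contraction on $B_{r_0}(\bar a)$ via the mean value inequality. Banach's fixed point theorem delivers the unique $\tilde a \in B_{r_0}(\bar a)$ with $T(\tilde a) = \tilde a$, and injectivity of $A$ upgrades this to the desired unique zero $F(\tilde a) = 0$.

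The one delicate point is the integral mean value representation itself: hypothesis \eqref{eq:general_Z_2} is phrased pointwise in $c \in B_r(\bar a)$, but the integral formula and the Lipschitz estimate used for the contraction both implicitly require that $DF$ exists at every point of the segment from $\bar a$ to any $a \in B_{r_0}(\bar a)$. This is standard in the radii polynomial literature and should be read as a mild strengthening of the Fr\'echet differentiability assumption at $\bar a$; once that is granted, the rest is routine Banach space calculus.
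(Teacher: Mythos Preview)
Your proof is correct and follows the standard argument for this Newton--Kantorovich type result; the paper itself states the theorem without proof, treating it as a known tool from the radii polynomial literature. Your observation about the differentiability hypothesis is well taken: the bound \eqref{eq:general_Z_2} already presupposes that $DF(c)$ exists for every $c \in B_r(\bar a)$, so the phrase ``Fr\'echet differentiable at $\bar a$'' in the statement should indeed be read as differentiable on the full ball, which makes the mean value representation legitimate.
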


The strategy of Theorem~\ref{thm:radii_polynomials} requires obtaining $\ba$ (a numerical approximation), the operator $A^{\dagger} \in B(X,X')$ (an approximation of the Fr\'echet derivative $DF(\ba)$) and the operator $A \in B(X',X)$ (an approximate inverse of $DF(\ba)$).

To compute the numerical approximation $\ba$, we first consider a finite dimensional projection of the map $F:X\to X'$. Fixing a dimensional Taylor projection number $N$, denote by $X^{(N)}$ the finite dimensional space 
\[
X^{(N)} \bydef \left\{ a = (a_1,\dots,a_n) : a_j =  \left( (a_j)_{\alpha} \right)_{|\alpha|=2}^N, \text{ for } j=1,\dots,n \right\}.
\]
Moreover, denote by $\kappa(N) \bydef \# \left\{ \alpha \in \N^m : |\alpha| \in \{2,\dots,N\} \right\}$ the number of multi-indices $\alpha$ with order between $2$ and $N$. Given a vector $b = (b_\ell)_{|\ell| \ge 0} \in \ell^1$, consider the projection
\begin{align*}
\pi^N : \ell^1& \to \R^{\kappa(N)} \\ 
b &\mapsto \pi^N b \bydef (b_\alpha)_{|\alpha|=2}^N \in \R^{\kappa(N)}.
\end{align*}
We generalize that projection to get $\Pi^N:X=(\ell^1)^{n} \to X^{(N)} \cong \R^{n\kappa(N)}$ defined by
\[
\Pi^N a \bydef (\pi^N a_1,\dots,\pi^N a_n) \in X^{(N)}.
\]
Given $a \in X$, we denote
\[
a^{(N)} \bydef \Pi^{N} a \in X^{(N)}.
\]
Moreover, we define the natural inclusion $\iota^N : \R^{\kappa(N)} \xhookrightarrow{} \ell^1$ as follows. For $b =(b_\alpha)_{|\alpha|=2}^N \in \R^{\kappa(N)}$ let $\iota^N b \in \ell^1$ be defined component-wise by
\[
\left( \iota^N b \right)_{\alpha}
= 
\begin{cases}
b_\alpha, & |\alpha|=2,\dots,N
\\
0, & |\alpha| > N.
\end{cases}
\]
Similarly, let $\inc:X^{(N)} \xhookrightarrow{} X$ be the natural inclusion defined as follows. Given $a = (a_1,\dots,a_n) \in X^{(N)} \cong \R^{n\kappa(N)}$, let
\[
\inc a \bydef \left( \iota^N a_1, \dots,\iota^N a_n \right) \in X.
\]
Finally, define the {\em finite dimensional projection} $F^{(N)} :X^{(N)} \to X^{(N)}$ by
\begin{equation} \label{eq:F_map_projection}
F^{(N)}(a) = \Pi^{(N)} F(\inc a).
\end{equation}
Also denote $F^{(N)} = \left(F_1^{(N)},\dots,F_n^{(N)} \right)$.

Assume that a numerical approximation $\ba^{(N)} = \left( \ba_1^{(N)}, \dots, \ba_n^{(N)} \right)$ such that $F^{(N)}(\ba^{(N)}) \approx 0$ has been computed (e.g.~using Newton's method).
Given $j=1,\dots,n$, denote $\ba_j = \iota^N \ba_j^{(N)} \in \ell^1$ and denote $\ba = (\ba_1,\dots,\ba_n)$, and for the sake of simplicity of the presentation, we use the same notation $\ba$ to denote $\ba \in X$ and $\ba^{(N)} \in X^{(N)}$. Denote by $DF^{(N)}(\ba)$ the Jacobian of $F^{(N)}$ at $\ba$, and let us write it as
\[
DF^{(N)}(\ba)=
\begin{pmatrix}
D_{a_1} F_1^{(N)}(\ba) & \cdots & D_{a_n} F_1^{(N)}(\ba) \\
\vdots & \ddots & \vdots \\
D_{a_1} F_n^{(N)}(\ba) & \cdots & D_{a_n} F_n^{(N)}(\ba)
\end{pmatrix} \in M_{n\kappa(N)}(\R).
\]
The next step is to construct the linear operator $A^\dag$ (an approximate derivative of the derivative $DF(\ba)$), and the linear operator $A$ (an approximate inverse of $DF(\ba)$). Let
\begin{equation} \label{eq:operator_dagA}
A^\dagger=
\begin{pmatrix}
A_{1,1}^\dagger & \cdots & A_{1,n}^\dagger\\
\vdots & \ddots & \vdots \\
A_{n,1}^\dagger & \cdots & A_{n,n}^\dagger
\end{pmatrix} ,
\end{equation}
whose action on an element $h=(h_1,\dots,h_n) \in X$ is defined by $(A^\dagger h)_i = \sum_{j=1}^n A_{i,j}^\dagger h_j$, for $i=1,\dots,n$. Here the action of $A_{i,j}^\dagger$ is defined as
\begin{align*}
(A_{i,j}^\dagger h_j)_n &= 
\begin{cases}
\bigl(D_{a_j} F_i^{(N)}(\ba) h_j^{(N)} \bigr)_\alpha &\quad\text{for }   2 \leq |\alpha| \le N ,  \\
\delta_{i,j} (\alpha \cdot \lambda) (h_j)_\alpha  &\quad\text{for }  |\alpha| > N,
\end{cases}
\end{align*}
where $\delta_{i,j}$ is the Kronecker $\delta$. 
Consider now a matrix $A^{(N)} \in M_{n\kappa(N)}(\R)$ computed so that $A^{(N)} \approx {DF^{(N)}(\ba)}^{-1}$. We decompose it into $n^2$ $\kappa(N) \times \kappa(N)$ blocks:
\[
A^{(N)}=
\begin{pmatrix}
A_{1,1}^{(N)} & \cdots& A_{1,n}^{(N)}\\
\vdots & \ddots & \vdots \\
A_{n,1}^{(N)} & \cdots& A_{n,n}^{(N)}
\end{pmatrix}.
\]
This allows defining the linear operator $A$ as
\begin{equation} \label{eq:operator_A}
A=
\begin{pmatrix}
A_{1,1} & \cdots& A_{1,n}\\
\vdots & \ddots & \vdots \\
A_{n,1} & \cdots & A_{n,n}
\end{pmatrix},
\end{equation}
whose action on an element $h=(h_1,\dots,h_n) \in X$ is defined by $(Ah)_i = \sum_{j=1}^n A_{i,j} h_j$, for $i=1,\dots,n$. 
Given $i,j \in\{1,\dots,n\}$, the action of $A_{i,j}$ is defined as
 \begin{align*}
 (A_{i,j} h_j)_n &=
 \begin{cases}
\left(A_{i,j}^{(N)} h_j^{(N)} \right)_\alpha & \text{for }   2 \leq |\alpha| \le N    \\
\delta_{i,j} \frac{1}{\alpha \cdot \lambda} (h_j)_\alpha  & \text{for }  |\alpha| > N.
 \end{cases}
  \end{align*}
Having obtained an approximate solution $\ba$ and the linear operators $\dagA$ and $A$, the next step is to construct the bounds $Y_0$, $Z_0$, $Z_1$ and $Z_2(r)$ satisfying \eqref{eq:general_Y_0}, \eqref{eq:general_Z_0}, \eqref{eq:general_Z_1} and \eqref{eq:general_Z_2}, respectively. 

\subsection{The \boldmath $Y_0$ \unboldmath bound} \label{sec:Y0}

Denote by $d$ the highest order nonlinear term of the vector field $f$. Then since $\ba$ consists of Taylor coefficients of order $N$, then $(F(\ba))_\alpha=0$ for all $|\alpha|>dN$. For $i=1,\dots,n$, we set
\[
Y_0^{(i)} \bydef
\sum_{|\alpha|=2}^{N} \biggl| \sum_{j=1}^n \left( A^{(N)}_{i,j} F^{(N)}_j(\ba) \right)_\alpha \biggr| 
+ \sum_{|\alpha|=N+1}^{dN} \biggl| \frac{1}{\alpha \cdot \lambda}  (F_i(\ba))_{\alpha} \biggr| 
\]
which is a collection of finite sums that can be evaluated with interval arithmetic. We conclude that 
\[
\| [ AF(\ba)]_i \|_1 = \biggl\| \sum_{j=1}^n A_{i,j} F_j(\ba) \biggr\|_1
\le Y_0^{(i)}, \qquad\text{for } i=1,\dots,n
\]
and we set 
\begin{equation} \label{eq:Y0_stable_manifold}
Y_0 \bydef \max \left(Y_0^{(1)},\dots,Y_0^{(n)} \right).
\end{equation}

\subsection{The \boldmath $Z_0$ \unboldmath bound} \label{sec:Z0}
We look for a bound of the form
$
\| I - A A^{\dagger}\|_{B(X)} \le Z_0
$.
Recalling the definitions of $A$ and $\dagA$ given in \eqref{eq:operator_A} and \eqref{eq:operator_dagA},
let $B \bydef I - A \dagA$ the bounded linear operator represented as
\begin{equation*} 
B = 
		\begin{pmatrix}
		 B_{1,1} & \cdots & B_{1,n} \\
		 \vdots & \ddots & \vdots \\
		 B_{n,1} & \cdots & B_{n,n}
		\end{pmatrix}.
\end{equation*}
We remark that $( B_{i,j} )_{n_1,n_2}=0$ for any $i,j \in \{1,\dots,n\}$, whenever $n_1 > N$  or $n_2 > N $.
Hence we can compute the norms $\|B_{i,j}\|_{B(\ell^1)}$ using the following standard result.

\begin{lem}\label{l:Blnu1norm}
Given $\Gamma \in B(\ell^1)$ a bounded linear operator, acting as $(\Gamma a)_\beta =\sum_{|\alpha| \ge 2} \Gamma_{\beta,\alpha} a_\alpha$ for $|\beta| \ge 2$. 
\begin{equation} \label{eq:operator_norm_ell_nu_1}
   \| \Gamma \|_{B(\ell^1)} = \sup_{|\alpha| \ge 2} \sum_{|\beta| \ge 2} | \Gamma_{\beta,\alpha} |.
\end{equation}
\end{lem}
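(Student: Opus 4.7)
The plan is to prove the two inequalities separately, corresponding to an upper and a lower bound on $\|\Gamma\|_{B(\ell^1)}$, and show they coincide. This is a standard fact about operators on $\ell^1$, so the main challenge is not conceptual but organizational: making sure that the interchange of summations in the upper bound is justified (since we are dealing with absolutely convergent double series on a countable index set, Tonelli's theorem applies without issue).

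For the upper bound, I would start with an arbitrary $a \in \ell^1$ and estimate
\begin{equation*}
\|\Gamma a\|_1 = \sum_{|\beta|\ge 2}\left| \sum_{|\alpha|\ge 2} \Gamma_{\beta,\alpha} a_\alpha \right| \le \sum_{|\beta|\ge 2}\sum_{|\alpha|\ge 2}|\Gamma_{\beta,\alpha}||a_\alpha|.
\end{equation*}
Swapping the order of summation (justified because all terms are nonnegative), this becomes
\begin{equation*}
\sum_{|\alpha|\ge 2}|a_\alpha|\sum_{|\beta|\ge 2}|\Gamma_{\beta,\alpha}| \le \left(\sup_{|\alpha|\ge 2}\sum_{|\beta|\ge 2}|\Gamma_{\beta,\alpha}|\right)\|a\|_1,
\end{equation*}
which gives the upper bound by definition of the operator norm.

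For the matching lower bound, I would test $\Gamma$ against the canonical basis vectors $e_\alpha \in \ell^1$ defined by $(e_\alpha)_\gamma = \delta_{\alpha,\gamma}$. Each satisfies $\|e_\alpha\|_1 = 1$, and one directly computes $(\Gamma e_\alpha)_\beta = \Gamma_{\beta,\alpha}$, so
\begin{equation*}
\|\Gamma e_\alpha\|_1 = \sum_{|\beta|\ge 2}|\Gamma_{\beta,\alpha}|.
\end{equation*}
Taking the supremum over $\alpha$ and using $\|\Gamma\|_{B(\ell^1)} \ge \|\Gamma e_\alpha\|_1 / \|e_\alpha\|_1$ yields the reverse inequality. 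Combining both bounds gives the claimed identity \eqref{eq:operator_norm_ell_nu_1}. The only subtlety worth flagging is to confirm that $e_\alpha \in \ell^1$ genuinely lies in the domain (it does, trivially), and that the column sums $\sum_{|\beta|\ge 2}|\Gamma_{\beta,\alpha}|$ are finite for every fixed $\alpha$, which follows from $\Gamma e_\alpha \in \ell^1$ since $\Gamma$ is assumed bounded.
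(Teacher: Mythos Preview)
Your proof is correct and is exactly the standard argument for this fact. The paper itself does not prove this lemma; it is stated there as a ``standard result'' and left without proof, so there is nothing to compare against beyond noting that your two-inequality approach (triangle inequality plus Tonelli for the upper bound, canonical basis vectors $e_\alpha$ for the lower bound) is the expected one.
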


Given $h =(h_1,\dots,h_n)  \in X = (\ell^1)^n$ with $\|h\|_X = \max(\|h_1\|_1,\dots,\|h_n\|_1) \leq 1$,
and for $i=1,\dots,n$, we obtain
\[
\|(Bh)_i \|_1 = \biggl\|  \sum_{j=1}^n B_{i,j} h_j \biggr\|_1 
\le   \sum_{j=1}^n \| B_{i,j} \|_{B(\ell^1)}.
\]
Hence we define
\begin{equation} \label{eq:Z0_1d_stable_manifold}
Z_0 \bydef \max_{i=1,\dots,n}
\left(
\sum_{j=1}^n \| B_{i,j} \|_{B(\ell^1)}
  \right),
\end{equation}
where each norm $\|B_{i,j}\|_{B(\ell^1)}$ can be computed using formula \eqref{eq:operator_norm_ell_nu_1} with vanishing tail terms.

\subsection{The \boldmath $Z_1$ \unboldmath bound} \label{sec:Z1} 

Recall that we look for the bound 
$
\| A[DF(\ba) - A^{\dagger} ] \|_{B(X)} \le Z_1
$.
Given $h=(h_1,\dots,h_n) \in X$ with $\|h\|_X \le 1$, set
\[
z \bydef [DF(\ba) -  A^{\dagger} ] h.
\]
Then, for each $i=1,\dots,n$, $(z_i)_\alpha = 0$ for $|\alpha| = 2,\dots,N $ and for $|\alpha| >N$,
\[
(z_i)_\alpha = -\left( Dg_i(\ba)h \right)_\alpha = -\left( \sum_{j=1}^n \frac{\partial g_i}{\partial a_j}(\ba)h_j \right)_\alpha
\]
%
Denote 
\[
\lambda^*(N) \bydef \min_{|\alpha|>N} |\alpha \cdot \lambda|.
\]
Since the tail of $A_{i,j}$ is zero for $i\neq j$, then $Az = (A_{1,1} z_1,\dots,A_{n,n} z_n)$. 
Then a straightforward calculation yields, for each $i \in \{1,\dots,n\}$, that
\[
\| A_{i,i} z_i \|_1  \le Z_1^{(i)} \bydef \frac{1}{\lambda^*(N)} \sum_{j=1}^n \left\| \frac{\partial g_i}{\partial a_j}(\ba) \right\|_1,
\]
%
so that we set 
\begin{equation} \label{eq:Z1_stable_manifold}
Z_1 \bydef \max \left(Z_1^{(1)},\dots,Z_1^{(n)} \right).
\end{equation}

\subsection{The \boldmath $Z_2$ \unboldmath bound} \label{sec:Z2}

For a fixed $r_*>0$, set 
\begin{equation} \label{eq:Z2_via_second_derivative}
  Z_2(r_*) \bydef \sup_{b \in B_{r_*}(\ba)} \biggl( \max_{i=1,\dots,n} \sum_{k,m=1}^n \left\| \sum_{j=1}^n A_{ij} 
 \frac{\partial^2 g_j}{\partial a_m \partial a_k} 
 \bigl( b \bigr) \right\|_{B(\ell^1)} \biggr)
\end{equation}
which satisfies (by the Mean Value Inequality in Banach spaces)
\[
\| A[DF(c) - DF(\ba)]\|_{B(X)} \le Z_2(r_*) r, \quad \text{for all } c \in B_r(\ba), \quad \text{for all } r \le r_*.
\]
Evaluating the bound \eqref{eq:Z2_via_second_derivative} is straightforward with interval arithmetic and the easily computed formulas of the second derivatives of each component $f_i$ of the vector field $f$.

\subsection{Rigorous enclosure of the points on \boldmath $W^s_{\rm loc} (\tx)$ \unboldmath} \label{sec:C0_manifold}

Assume that assumptions A1, A2 and A3 are satisfied for a fixed point $\tx$. Let $\lambda_1,\dots,\lambda_m<0$ be the corresponding non-resonant real (stable) eigenvalues and $\xi_1,\dots,\xi_m \in \R^n$ be some associated {\em stable eigenvectors}.

Consider $N \ge 2$ the order of the Taylor approximation, and as before, 
assume that a numerical approximation $\ba^{(N)} = \left( \ba_1^{(N)}, \dots, \ba_n^{(N)} \right)$ such that $F^{(N)}(\ba^{(N)}) \approx 0$ has been computed. Denote by
\begin{equation} \label{eq:manifold_approximation}
P^{(N)}(\theta) \bydef \tx + \sum_{k=1}^m \xi_k \theta_k + \sum_{|\alpha|=2}^N \ba_\alpha \theta^\alpha.
\end{equation}
Using a computer program in MATLAB using the interval arithmetic package INTLAB, we can compute rigorously the bounds $Y_0$, $Z_0$, $Z_1$ and $Z_2$ satisfying \eqref{eq:Y0_stable_manifold}, \eqref{eq:Z0_1d_stable_manifold}, \eqref{eq:Z1_stable_manifold} and \eqref{eq:Z2_via_second_derivative}, respectively.
Define the radii polynomial $p(r)$ defined in \eqref{eq:general_radii_polynomial}, and assume the existence of $r_0>0$ such that $p(r_0)<0$. From Theorem~\ref{thm:radii_polynomials}, there exists a unique $\ta \in B_{r_0}(\ba)$ such that $F(\ta) = 0$. By Theorem~\ref{thm:equivalent_zero_finding}, the corresponding Taylor expansion $P:B_1^m \to \R^n$ given by \eqref{eq:parameterization_proof} provides a parameterization of a local stable manifold of $\tx$, that is $P(B_1^m) = W^s_{\rm loc} (\tx)$. From the computer-assisted proof, we immediately obtain a rigorous upper bound for the $C^0$ error bound between the approximate parameterization  \eqref{eq:manifold_approximation} and the true parameterization. More explicitly, for a fixed $j=1,\dots,n$
\begin{align*}
\sup_{z \in B_1^m} |P_j(z)-P_j^{(N)}(z)| &=
\sup_{z \in B_1^m} \left| \sum_{|\alpha|=2}^\infty ((\ta_j)_\alpha-(\ba_j)_\alpha) z^\alpha \right| \\
& \le
\sup_{z \in B_1^m} \sum_{|\alpha|=2}^\infty |(\ta_j)_\alpha-(\ba_j)_\alpha| |z_1|^{\alpha_1} \cdots |z_m|^{\alpha_m} \\
& \le \sum_{|\alpha|=2}^\infty |(\ta_j)_\alpha-(\ba_j)_\alpha| =\| \ta_j -\ba_j \|_1 \le \| \ta -\ba \|_X  < r_0.
\end{align*}
Using that estimate, given a point $z \in B_1^m$ in parameter space, one may evaluate rigorously the corresponding value $P(z) \in W^s_{\rm loc} (\tx)$ on the local stable manifold using the following enclosure 
\begin{equation} \label{eq:rigorous_enclosure_on_manifold}
P_j(z) \in P_j^{(N)}(z) + [-r_0,r_0], \qquad j=1,\dots,n
\end{equation}
where $P_j^{(N)}(z)$ can be computed with interval arithmetic using the formula \eqref{eq:manifold_approximation}.

\section{Saddle-type blow-up solutions: basic methodology for validations and extensions}

\label{sec:methodology}
\KMa{In this section, we provide a methodology for validating saddle-type blow-up solutions with computer-assisted proofs.}
A remarkable feature obtained from theorems mentioned in Section \ref{sec:pre_blow_up} is that stability of equilibria on the horizon does not matter for characterizing blow-up solutions. 
Therefore we can characterize blow-up solutions whose blow-up direction is characterized by {\em unstable} equilibria\footnote{
Potentially the similar characterization of blow-up solutions can be achieved with general invariant sets on the horizon.
But we pay attention only to equilibria on the horizon in the present study.
} 
\KMa{in the same way as stable ones}.
\KMa{When we emphasize} structure of equilibria on the horizon, we shall call them as follows.
\begin{dfn}\rm
We say that a blow-up solution is {\em sink-type} (resp. {\em saddle-type}) if it is transformed into a trajectory on \KMa{$W^s_{\rm loc}(x_\ast; g)$} with a sink (resp. saddle) equilibrium $x_\ast$ on the horizon for the associated desingularized vector field $g$ \KMa{introduced in Section \ref{sec:pre_blow_up}}.
\end{dfn}
There are many studies of blow-up solutions through analytic arguments (e.g. \cite{FM2002, HV1997, W2013}) or numerical simulations (e.g. \cite{AIU2017, C2016, CHO2007, ZS2017}), many of which would be sink-type through related numerical simulations and computer-assisted proofs (e.g. \cite{Mat2018, MT2020_1, MT2020_2}).
Whereas, saddle-type blow-up solutions are quite difficult to calculate and to understand the role in global dynamics, \KMa{because} generic small perturbations of \KMa{initial points} (for (\ref{ODE-original})) break the structure.
\KMa{Even in the context of dynamics at infinity (i.e., without concerning the blow-up nature of solutions)}, there are very limited studies for characterizing \KMa{trajectories asymptotic to the horizon} themselves and their global nature, except special cases such as {\em planar} dynamical systems (e.g. \cite{DH1999, DLA2006}).
\KMa{On the other hand, saddle-type blow-up solutions themselves can exist in various types of differential equations, many of which do not concern with its sensitivity under perturbations of initial points, but are interested only in their existence and/or persistence of blow-up structure under perturbation of initial points is mentioned implicitly (cf. \cite{Ha2016, Ha2017, NZ2015} for complex-valued PDEs).}
\par
\KMa{
Here we will see that our validation methodology provide not only a systematic way to capture saddle-type blow-up solutions but also distributions of a collection of blow-up solutions in the phase space, both of which are with mathematical rigor.
Moreover, as seen in preceding works, methodologies with computer-assisted proofs provide {\em explicit} enclosures of computation objects.
This property enables us to visualize the distribution of solution profiles and blow-up times depending on initial points of blow-up solutions.
As a byproduct of the application of parameterization method reviewed in Section \ref{sec:parameterization_method}, we obtain an explicit formula of $t_{\max}$ as a function of initial \KMa{points} and its smoothness.
}


\subsection{Basic methodology}
\label{sec:method_basic}
First we discuss a basic methodology for validating \KMa{(locally defined) blow-up solutions and their extension}.
The fundamental steps consist of the following:
\begin{enumerate}
\item Validation of local stable manifolds of equilibria on the horizon for desingularized vector fields;
\item Extension of validated stable manifolds via rigorous integration of desingularized vector fields.
\end{enumerate}
These steps are shown to provide a collection of {\em divergent} solutions of (\ref{ODE-original}), according to Theorems \ref{thm:blowup-dir}, \ref{thm:blowup-Poincare}, and \ref{thm:blowup-parabolic} except the evaluation of $t_{\max}$.
When an equilibrium $p$ on the horizon is {\em stable}, the validation procedures reported in \cite{MT2020_1, MT2020_2, TMSTMO2017} allow (a) studying the local stable manifold of $p$ by means of {\em locally defined Lyapunov functions}; and (b) computing rigorous enclosure of \KMa{solutions} converging to $p$, hence yielding a rigorous bound of the blow-up time.
\KMa{
Although the same strategy or similar topological arguments such as {\em covering relations} (e.g. \cite{ZCov2009}) can work effectively, 
we apply the parameterization method to validating local stable manifolds of equilibria on the horizon for desingularized vector fields here instead.

}
\par
An important merit of the parameterization method is that {\em only the stable information of equilibria can be treated through the whole computations \KMa{involving invariant manifolds, no matter how unstable equilibria or general invariant sets are}}.
In other words, if we can compute stable eigenvectors at the equilibria and a topological conjugacy $P$ with high accuracy, we obtain the local stable manifold without containing intrinsic unstable information of equilibria\footnote{
\KMa{
In topological arguments such as local Lyapunov functions and covering relations, topological information of {\em both} stable and unstable directions around equilibria are necessary to validate locally defined invariant manifolds, which cause a big difference of treatments between stable and unstable invariant sets.
}
}.
\KMa{Moreover, we obtain the embedding of the parameterized invariant manifolds and hence the {\em distribution of locally defined invariant manifolds in the whole phase space can be captured at the same time}.
This distribution greatly helps us with investigating the behavior of trajectories far from invariant manifolds.}
\KMa{Universality} of such features \KMa{in the parameterization} is shown in many preceding works (e.g. \cite{MR4127962,BLM2016,MR3706909,MR3792792,MR2821596}) for obtaining global nature of dynamical systems.
\par
After validating the locally parameterized stable manifolds, \KMa{these manifolds can be extended through time-integrations of the {\em time-reversal} desingularized vector fields.} 
\KMa{In particular, we obtain {\em globalized} stable manifolds whose preimages under compactifications are (candidates of) families of saddle-type blow-up solutions\footnote{
Needless to say, the proposing methodology can be applied to sink-type blow-up solutions.
}}.
\KMa{Globalization of invariant manifolds enables us to investigate global nature of dynamical systems, including blow-up solutions in the present study, while the methodology itself is standard and essentially identical with the one used in preceding works (e.g. \cite{MT2020_1, MT2020_2, TMSTMO2017})}.

%
%

\begin{rem}[Rigorous integrators of ODEs]\label{rem:integrator}
Many methods for rigorously integrating solution trajectories of vector fields have been proposed over the last thirty years. The most famous achievement is the resolution of Smale's 14th problem by W.~Tucker \cite{Tucker2002}.
We refer to \cite{COSY, Bunger:2020aa, Immler:2018aa,Kashi1,MR3148084,Lohner,Zgli} for different methods for rigorous integration of ODEs.
These methods are based on fixed-point arguments, which is equivalent to show the existence of solution trajectories, and several techniques of interval arithmetic.
For the sake of forward time integration, we use a C++ Library for rigorous integration of ODEs, which is named the \emph{kv library} \cite{kv}. This integrator is based on an interval representation of the solutions' Taylor series and the Affine arithmetic \cite{Rump2015}, which is a technique for preventing the so-called wrapping effect in interval analysis. 
\end{rem}


\KMa{The remaining issue is \KMa{the finiteness} of $t_{\max}$ and its explicit enclosure to assure that our validated trajectories indeed \KMa{correspond} to blow-up solutions for the original system.}
The blow-up time $t_{\max}$ generally depends on initial \KMa{points of solutions}. 
We now introduce an explicit estimate methodology for obtaining blow-up times.
\par
\KMa{
The blow-up time $t_{\max}$ is defined by the improper integral as $\tau \to \infty$ in (\ref{time-desing-directional-integral}), (\ref{time-desing-Poin-integral}) or (\ref{time-desing-para-integral}), where $\tau$ is the corresponding time-scale.
The basic approach to enclose $t_{\max}$ is to divide the integral into two parts:
\begin{equation}
\label{tmax-basic}
t_{\max} = t_0 + \int_{\tau_0}^{\bar \tau} h(\tau) d\tau +  \int_{\bar \tau}^{\infty} h(\tau) d\tau \equiv t_0 + t_{\max, 1}(\bar \tau) + t_{\max, 2}(\bar \tau)
\end{equation}
for some $\bar \tau > \tau_0$, where $h$ is a functional representing integrands for characterizing $t_{\max}$ depending on solutions of the desingularized vector field $g$.
The key point of the successive treatments is to enclose $t_{\max, 2}(\bar \tau)$ by using the asymptotic information of trajectories, say the fact that trajectories of our interest are located on a local stable manifold $W^s_{\rm loc}(p; g)$ of a saddle equilibrium $p$.
Once the manifold $W^s_{\rm loc}(p; g)$ is constructed through the parameterization $P$, the functional $h$ is expressed by means of a (nonlinear) combination of $P$, and the enclosure of $t_{\max, 2}(\bar \tau)$ is also computed through $P$ itself or its enclosure.
As for $t_{\max, 1}(\bar \tau)$, we directly enclose the integral through the enclosed trajectories via ODE integrations.
\par
When we extend the local stable manifold, then integrate the vector field $g$ in the reverse-time direction and evaluate $t_{\max,1}(\bar \tau)$ by
\begin{equation*}
\label{blow-up-time-traj}
\int_{\tau_0}^{\bar \tau} h(\tau) d\tau = \int_{-\bar \tau}^{-\tau_0} h(\tilde \tau) d\tilde \tau\quad \KMa{\text{with }\tilde \tau = -\tau}.
\end{equation*}
The functional $h$ is given as follows, depending on the choice of compactifications:
\begin{description}
\item[Directional:] $\KMa{h}(\tau) = s(\tau)^k$.
\item[Poincar\'{e}-type:] $\KMa{h}(\tau) = (1-p(x(\tau))^{2c})^{k/2c} = \left( 1-\sum_{i=1}^n x_i(\tau)^{2\beta_i} \right)^{k/2c}$.
\item[Parabolic-type:] $\KMa{h}(\tau) = \left(1-\frac{2c-1}{2c}(1-p(x(\tau))^{2c})\right)(1-p(x(\tau))^{2c})^k$.
\end{description}
}

\begin{rem}
The absence of constant terms in the integrand of $t_{\max}$ is the most essential property to show that $t_{\max} < \infty$ in the preceding work \cite{Mat2018} when \KMa{equilibria on the horizon} are hyperbolic, where the Hartman-Grobman-type argument is applied to extracting the exponentially decaying property of the integrand.
This property is essentially independent of the choice of compactifications associated with appropriately chosen time-scale desingularizations.
The present argument explicitly extracts this property to verify $t_{\max} < \infty$ by means of the parameterization method.
\end{rem}

\par
Summarizing the above arguments, our methodology for validating (saddle-type) blow-up solutions consists of the following.
\begin{enumerate}
\item Validate the local stable manifold of an equilibrium on the horizon for desingularized vector fields via the parameterization method.
\item Extend the validated stable manifold via (backward) integration, that is done by considering the time-reversed desingularized vector fields.
\item Compute a rigorous enclosure of the blow-up time $t_{\max}$ through the decomposition of the form (\ref{tmax-basic}) as well as direct integrations \KMa{through trajectories and parameterizations}.
\end{enumerate}

\par
In the subsequent sections, applicability of the present methodology is shown.
In particular, we aim at showing the following features, respectively:
\begin{itemize}
\item {\bf Section \ref{sec:demo1}} shows an application of {\em directional compactifications} for validating saddle-type blow-up profiles and computing the validated curve $t_{\max}$ as a function of initial points.
\item {\bf Section \ref{sec:demo2}} shows an {\em application to higher-dimensional systems}. 
In the present study we consider an artificial $3$-dimensional system.
The Poincar\'{e}-type compactification is applied to an asymptotically homogeneous vector field.
This example shows the global phase portrait involving multiple saddle-type blow-up solutions.
\item {\bf Section \ref{sec:demo3}} shows a characteristic nature of saddle-type blow-up solutions with bounded global solutions which separate the whole phase space into \KMa{four} sets, one of which is the set of points such that solutions through them determine time-global solutions for both time directions and \KMa{the others are the sets} of points such that solutions through them are blow-up solutions \KMa{in positive and/or negative time directions}.
Dependence of $t_{\max}$ as a function of initial \KMa{points} including saddle-type blow-up solutions is also addressed.
The parabolic-type compactification is applied to an asymptotically quasi-homogeneous vector field.
\end{itemize}

\subsection{Smooth dependence of \boldmath$t_{\max}$\unboldmath~on initial points}
\label{sec:tmax-smoothness}
Explicit expressions of $t_{\max}$ shown in Section \ref{sec:method_basic} indicate that $t_{\max}$ depends continuously, possibly smoothly, on initial points within stable manifolds of hyperbolic equilibria (for desingularized vector fields) on the horizon, which is just a consequence of standard calculus.
One of \KMa{benefits} of applying the parameterization method \KMa{reviewed} in Section~\ref{sec:parameterization_method} is that $t_{\max}$ can be treated as a {\em locally analytic} function on initial points of solutions. 
Here we discuss the dependence of $t_{\max}$ on initial points in more details.
\par
Consider the desingularized vector field $g$ associated with the directional (resp. Poincar\'{e}-type and parabolic-type) compactification with the \KMa{associated} time-scale desingularization.
Let $p_\ast \in \mathcal{E}$ be a hyperbolic equilibrium for $g$.
First note that typical choices of time-scale desingularizations \KMa{$h$}, namely the integrand of $t_{\max}$ \KMa{mentioned in (\ref{tmax-basic})}, satisfy the following properties (so that trajectories for $g$ is orbitally equivalent to the original dynamical system (cf. \cite{Mat2019})):
\begin{itemize}
\item It vanishes at $p_\ast$. 
\item It is positive along $W^s_{\rm loc}(p_\ast; g)$.
\item It is smooth, in particular analytic, except $k/2c \not \in \mathbb{N}$ in the case of the Poincar\'{e}-type compactifications. 
\end{itemize}

Assume that all assumptions in Theorem \ref{thm:radii_polynomials} for $F$ given in (\ref{eq:parameterization_F(a)=0}) as well as (A1), (A2) and (A3) associated with the hyperbolic equilibrium $p_\ast$ for $g$ are satisfied, in which case the local stable manifold $W^s_{\rm loc}(p_\ast;g)$ is parameterized by an {\em analytic} function $P$ defined on the \KMa{$m$-dimensional} unit polydisc $B_1^m$ so that $W^s_{\rm loc}(p_\ast;g) = P(B_1^m)$.
For typical asymptotically quasi-homogeneous fields, the function \KMa{$h$} can be chosen as a polynomial or a rational function whose denominator is polynomial and positive on $W^s_{\rm loc}(p_\ast;g)$.
From these observations, we obtain the following proposition.

\begin{prop}[Analytic function through parameterization]
\label{prop-blow-up-time-analytic}
Let $p_\ast$ be an hyperbolic equilibrium for a dynamical system \KMa{generated by a vector field $g$ which is analytic in a neighborhood of $p_\ast$} satisfying (A1), (A2) and (A3).
Assume that a parameterization $P$ of $W^s_{\rm loc}(p_\ast)$ satisfying $P(0) = p_\ast$ is defined on the \KMa{$m$-dimensional} unit polydisc $B^m_1$, in particular $W^s_{\rm loc}(p_\ast) = P(B^m_1)$.
Let \KMa{$h$} be an analytic function defined in a neighborhood of $W^s_{\rm loc}(p_\ast)$ satisfying $\KMa{h}(p_\ast) = 0$.
Then the integral
\begin{equation}
\label{U-integral}
U(\theta) \bydef \int_0^\infty \KMa{h}\circ P (e^{\Lambda \tau} \theta) d\tau,\quad \theta\in B_1^m
\end{equation}
is an analytic function on $B^m_1$ satisfying $U(0)=0$.
\end{prop}
\begin{proof}
\KMa{Because} $\KMa{h}$ and $P$ are analytic, then so is $\KMa{h}\circ P$ and hence the integrand of $U$ is written by the convergent series
\begin{equation*}
\KMa{h}\circ P (e^{\Lambda \tau}\theta) = \sum_{|\alpha| \geq 0} c_{\alpha} \left( e^{\Lambda \tau} \theta \right)^\alpha.
\end{equation*}
Denoting $\alpha \cdot \lambda = \sum_{i=1}^m \alpha_i \lambda_i $,
the assumption $\KMa{h}\circ P(0) = 0$ implies that $c_{\bf 0} = 0$ and 
\begin{align*}
U(\theta) &= \int_0^\infty \sum_{|\alpha| > 0} c_{\alpha} \left( e^{\Lambda \tau} \theta \right)^\alpha d\tau = \int_0^\infty  \sum_{|\alpha| > 0} c_{\alpha} \theta^\alpha e^{(\alpha \cdot \lambda )\tau}  d\tau \\
	&= \sum_{|\alpha| > 0} c_{\alpha} \theta^\alpha \left( e^{(\alpha \cdot \lambda)\tau}  d\tau \right) \\
	&= -\sum_{|\alpha| > 0} \frac{c_{\alpha} \theta^\alpha }{\alpha \cdot \lambda},
\end{align*}
which converges uniformly on $B^m_1$.
Indeed, letting $\sigma_{\rm gap} \bydef \min_{j=1,\dots,m} |\lambda_j| > 0$, then
\begin{align*}
\left| U(\theta)_j \right| & = \left| \sum_{|\alpha| > 0} \frac{(c_j)_{\alpha} \theta^\alpha }{\alpha \cdot \lambda} \right| \leq \frac{1}{\sigma_{\rm gap}} \left| \sum_{|\alpha| > 0} (c_j)_{\alpha}\theta^\alpha \right|.
\end{align*}
holds for $j=1,\KMa{\ldots}, n$, uniformly in $B^m_1$.
\end{proof}

This proposition provides a fundamental feature of blow-up times.
\KMa{Combining with the choice of functionals $h$ mentioned in Section \ref{sec:method_basic}, the compositions of $h$ and the parameterization $P$ are given as follows for each compactification:
}
\begin{description}
\item[Directional:] $\KMa{h}(s(\tau), x(\tau)) = s(\tau)^k$ and $s(\tau) = P_1(e^{\Lambda \tau}\theta)$.
\item[Poincar\'{e}-type with $k/2c\in \mathbb{N}$:] $\KMa{h}(x(\tau)) = (1-p(x(\tau))^{2c})^{k/2c} = \left( 1-\sum_{i=1}^n x_i(\tau)^{2\beta_i} \right)^{k/2c}$ and $x(\tau) = P(e^{\Lambda \tau}\theta)$.
\item[Parabolic-type:] $\KMa{h}(x(\tau)) = \left(1-\frac{2c-1}{2c}(1-p(x(\tau))^{2c})\right)(1-p(x(\tau))^{2c})^k$ and $x(\tau) = P(e^{\Lambda \tau}\theta)$.
\end{description}
\KMa{The function $U(\theta)$ in (\ref{U-integral})} equals to $t_{\max} = t_{\max}(\theta)$ under corresponding compactifications and time-scale desingularizations.
Note that the function $\KMa{h}$ corresponds to the time-scale transformation factor. 
Different choice of time-scale desingularizations provides different $\KMa{h}$ and, consequently, different determination of $U(\theta) = t_{\max}(\theta)$. 
\par
The inverse $T^{-1}$ of compactifications {\em away from the horizon} can be described by analytic functions \KMa{because} it is defined by the $n$-tuples of composite functions of radicals and rational functions whose singularities in the sense of the loss of regularity and convergence of infinite series are located \KMa{on the horizon}.
The analytic dependence of $t_{\max}$ on bounded initial \KMa{points} for (\ref{ODE-original}) \KMa{near blow-up} is therefore inherited by restricting our attention to stable manifolds of equilibria \KMa{on the horizon for desingularized vector fields}.

\begin{thm}[Analyticity of blow-up times]
\label{thm-blow-up-time-analytic}
Let $t_{\rm max}$ be given by \KMa{the directional (resp. the Poincar\'{e}-type with $k/2c\in \mathbb{N}$, or the parabolic-type) compactification given by (\ref{time-desing-directional-integral}) (resp. (\ref{time-desing-Poin-integral}) and (\ref{time-desing-para-integral})).}
Let $W^s_{\rm loc}(p_\ast; g)$ be a local stable manifold \KMa{of a hyperbolic saddle $p_\ast$ on the horizon} for the desingularized vector field \KMa{$g = g_d$} \KMa{(resp. $g=g_{qP}$ and $g = g_{para}$)} given by the parameterization $P$ satisfying all requirements presented in Proposition \ref{prop-blow-up-time-analytic}.
Let $y_0\in \mathbb{R}^n$ be a point such that the solution $y(t)$ to (\ref{ODE-original}) with $y(\KMa{t_0})=y_0$ is mapped into the \KMa{trajectory $\{(T(y))(\tau)\}_{\tau \geq \tau_0}$} included in $W^s_{\rm loc}(p_\ast; g)$ through \KMa{$T = T_d$ (resp. $T = T_{qP}$ and $T = T_{para}$)} and the \KMa{corresponding} time-scale desingularization. 
\par
Then the blow-up time $t_{\max}$ is real analytic at $y_0$ in 
${\rm int}_{T^{-1}(\mathcal{D})}T^{-1}(W^s_{\rm loc}(p_\ast; g))$, where $\mathcal{D}$ is \KMa{given} by (\ref{D-directional}) \KMa{(resp. (\ref{D-global}) for Poincar\'{e}- and parabolic-types)}.
Moreover, $t_{\max} = t_{\max}(y_0)$ converges to $0$ as $y_0$ goes to infinity along the solution $y(t)$.
\end{thm}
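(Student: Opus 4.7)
The plan is to express $t_{\max}(y_0)$ as a composition of real-analytic maps through the parameterization chart, and then read the two conclusions off Proposition~\ref{prop-blow-up-time-analytic}. Under each of the three compactifications listed, the time-scale desingularization together with Equations~\eqref{blow-up-time-dir}, \eqref{blow-up-time-Poin}, and \eqref{blow-up-time-para} supplies an integrand $S$ that is polynomial (using the hypothesis $k/(2c)\in\mathbb{N}$ in the Poincar\'{e}-type case) with $S(p_\ast)=0$, and along any orbit lying in $W^s_{\rm loc}(p_\ast;g)$ the conjugacy reads $(T(y))(\tau)=P(e^{\Lambda\tau}\theta_0)$, where $\theta_0\in B_1^m$ is the unique parameter with $P(\theta_0)=T(y_0)$. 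Consequently $t_{\max}(y_0)=\int_0^\infty S\circ P(e^{\Lambda\tau}\theta_0)\,d\tau=U(\theta_0)$, and Proposition~\ref{prop-blow-up-time-analytic} immediately gives that $U\colon B_1^m\to\mathbb{R}$ is real-analytic with $U(0)=0$.

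Second, I would show that the assignment $y_0\mapsto\theta_0$ is real-analytic on the indicated open set. The compactification $T$ is real-analytic there by Remark~\ref{rem-comp-anal}. Since $DP(0)=A_0$ has full column rank $m$, the inverse function theorem yields a real-analytic local left-inverse of $P$ on a neighborhood of $p_\ast$ inside $W^s_{\rm loc}(p_\ast;g)$. To extend this to an arbitrary $\theta\in B_1^m$, I would differentiate the conjugacy $P(e^{\Lambda\tau}\theta)=\varphi(\tau,P(\theta))$ with respect to $\theta$ to obtain
\begin{equation*}
DP(e^{\Lambda\tau}\theta)\,e^{\Lambda\tau}=D_y\varphi(\tau,P(\theta))\,DP(\theta),
\end{equation*}
and choose $\tau>0$ large enough that $e^{\Lambda\tau}\theta$ lies in the neighborhood of $0$ on which $DP$ already has rank $m$; invertibility of $e^{\Lambda\tau}$ and of the flow-derivative $D_y\varphi(\tau,\cdot)$ then forces $DP(\theta)$ to have rank $m$ as well. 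Combined with the injectivity of $P$ established in Section~\ref{sec:parameterization_method}, this makes $P$ a real-analytic embedding of $B_1^m$ onto $W^s_{\rm loc}(p_\ast;g)$ whose inverse on the image is real-analytic. Composition then gives the analyticity of $t_{\max}=U\circ P^{-1}\circ T$.

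For the convergence $t_{\max}(y_0)\to 0$, as the initial point $y_0$ is slid along the blow-up orbit toward infinity in the original space, the compactified image $T(y_0)$ travels along the desingularized trajectory $\tau\mapsto P(e^{\Lambda\tau}\theta_0^{\mathrm{init}})$ toward $p_\ast$, so the corresponding parameter tends to $0$. Continuity of $U$ together with $U(0)=0$ then yields $t_{\max}(y_0)=U(\theta_0)\to 0$.

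The hardest step is the global-in-$B_1^m$ immersivity of $P$ needed to invert it analytically on the whole stable manifold. This is handled by the flow-transport identity displayed above, which converts the a priori local statement at $\theta=0$ into a global one; it relies crucially on stability (so that $e^{\Lambda\tau}$ contracts $\theta$ to $0$) and on the smoothness of the desingularized flow $\varphi$. Once that is in place, the remaining ingredients---analyticity of $T$ away from the horizon, the polynomial form of $S$ in each case, and analyticity of $U$---are immediate from Remark~\ref{rem-comp-anal}, inspection of the three integrands, and Proposition~\ref{prop-blow-up-time-analytic}, respectively.
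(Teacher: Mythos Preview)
Your proof is correct and follows essentially the same route as the paper: write $t_{\max}=U\circ P^{-1}\circ T$, invoke Proposition~\ref{prop-blow-up-time-analytic} for $U$, Remark~\ref{rem-comp-anal} for $T$, and the analytic inverse function theorem for $P^{-1}$; then read off the convergence from $U(0)=0$. The one place you go further than the paper is the global immersivity of $P$ on $B_1^m$: the paper relegates this to a footnote invoking the ``linear isomorphism property of $DP$'' without justification, whereas your flow-transport identity $DP(e^{\Lambda\tau}\theta)e^{\Lambda\tau}=D_y\varphi(\tau,P(\theta))DP(\theta)$ actually proves that $DP(\theta)$ has full rank for every $\theta\in B_1^m$, not just at $\theta=0$, which is what is really needed.
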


\begin{proof}
Let $y_0 \in {\rm int}_{T^{-1}(\mathcal{D})}T^{-1}(W^s_{\rm loc}(p_\ast; g))$ be arbitrary.
Then there is a unique point $\theta_0 \in B^m_1$ such that $y_0 = T^{-1}(P(\theta_0))$.
We shall write $\theta_0 = P^{-1}(T(y_0))$, where the expression of $P^{-1}$ reflects the one-to-one property of $P$ on $B^m_1$. 
\KMa{Because} $T$ is analytic in \KMa{$\mathcal{D}$} (Remark \ref{rem-comp-anal}), then so is $P^{-1}\circ T$ in $ {\rm int}_{T^{-1}(\mathcal{D})}T^{-1}(W^s_{\rm loc}(p_\ast; g))$\footnote{
Analyticity of $P^{-1}$ follows from that of $P$ by assumption, linear isomorphism property of $DP$ and the inverse function theorem for analytic functions. 
See e.g. \cite{D1960} for the latter argument.
}.
Then the blow-up time $t_{\max} = t_{\max}(y_0)$ at $y_0$ is written by
\begin{equation*}
t_{\max} = t_{\max}(\theta_0) = \KMa{t_0 + } \int_{\KMa{\tau_0}}^\infty \KMa{h}\circ P (e^{\Lambda \tau} (P^{-1}(T(y_0)))) d\tau \equiv t_{\max}(y_0),
\end{equation*}
\KMa{where $h$ is a function mentioned just after the proof of Proposition \ref{prop-blow-up-time-analytic}.}
The integrand is analytic in ${\rm int}_{T^{-1}(\mathcal{D})}T^{-1}(W^s_{\rm loc}(p_\ast; g))$, according to the same argument as the proof of Proposition \ref{prop-blow-up-time-analytic}.
Therefore $t_{\max} = t_{\max}(y_0)$ is analytic at $y_0 \in {\rm int}_{T^{-1}(\mathcal{D})}T^{-1}(W^s_{\rm loc}(p_\ast; g))$.
\par
Our assumption for the solution $y(t)$ implies that the property of $y(t)$ going to infinity as $t\to t_{\max}$ corresponds to $(T(y))(\tau) \to p_\ast$ as $\tau \to \infty$.
Moreover, for any $y_0 \in {\rm int}_{T^{-1}(\mathcal{D})}T^{-1}(W^s_{\rm loc}(p_\ast; g))$, we can choose a point $\tilde y_0 \in \mathbb{R}^n$ such that $T(\tilde y_0) \in W^s_{\rm loc}(p_\ast; g)$ and that $y_0 = y(t) = y(t; \tilde y_0\KMa{, t_0})$ for some $t>\KMa{t_0}$, where $t$ is uniquely determined by $\tilde y_0$.
The last assertion is equivalent to $T(y_0) = (T(\tilde y_0))(\bar \tau)$ for $\bar \tau > \KMa{\tau_0}$ uniquely determined by $t$ and the time-scale desingularization.
Fix the point $\tilde y_0$.
\KMa{Consider the decomposition (\ref{tmax-basic}) of $t_{\max} = t_{\max}(\tilde y_0)$ with the integrand $S_h(\tau)\equiv h\circ P (e^{\Lambda \tau} (P^{-1}(T(y_0))))$}.
%
Notice that the second term in the right-hand side is the contribution of $y_0$ to the determination of $t_{\max}$. 
As as result, we have 
\begin{equation*}
t_{\max}(y_0) = \int_{\bar \tau}^\infty \KMa{S_h}(\tau) d\tau.
\end{equation*}
As mentioned, the convergence of $T(y_0)$ to $p_\ast$ corresponds to $\bar \tau \to \infty$.
\KMa{Because $S_h$} is analytic in $B_1^m$, the integral $t_{\max}(y_0)$ goes to $0$ as $\bar \tau \to \infty$.
This implies the final statement in the theorem.
\end{proof}

Theorem \ref{thm-blow-up-time-analytic} indicates that $t_{\max}$ depends analytically on initial points on $T^{-1}(W^s_{\rm loc}(p_\ast; g))$, provided that the non-resonance condition holds for eigenvalues of $Dg(p_\ast)$.
Furthermore, a computer-assisted proof for the existence of $P$ as discussed in Section \ref{sec:parameterization_method} provides the {\em explicit} region where the analyticity of $t_{\max}$ as a function of initial points of trajectories is guaranteed.
Extending $W^s_{\rm loc}(p_\ast; g)$ through the flow and using the smooth dependence of the flow on initial points, we can extend $t_{\max}$ as a smooth function of the initial \KMa{points} whose smoothness depends on that for the flow, as long as $W^s_{\rm loc}(p_\ast; g)$ is smoothly continued.
In particular, $t_{\max}$ can be analytically continued if the vector field $g$ is analytic.
Note that the analyticity, or even continuity of $t_{\max}$ is not guaranteed {\em as a function of $y$ in $\mathbb{R}^n$} \KMa{because} the expression of $t_{\max}$ as an analytic function only makes sense on $W_{\rm loc}^s(p_\ast;g)$.
The different choice of $p_\ast$ induce a different expression of $P$, and hence of $t_{\max}$.
\par
\bigskip
\KMa{In the end of this section,} we shall derive a detailed implementation of $t_{\max}$ for directional compactifications, namely \KMa{an} estimate of the integral $\int_{\bar \tau}^\infty s(\tau)^k d\tau$ given in (\ref{time-desing-directional-integral}).
The corresponding calculations of $t_{\max}$ for Poincar\'{e}-type with $k/2c \in \mathbb{N}$ and parabolic-type compactifications are achieved in the same manner.
Let $p_\ast \in \mathcal{E}$ be a hyperbolic equilibrium for \KMa{the} desingularized vector field $g = g_d$.
Assume that the parameterization method around $p_\ast$ works and the local stable manifold $W^s_{\rm loc}(p_\ast; g)$ is obtained through the ($m$-dimensional) stable polydisk $B_1^m$ and the parameterization $P$.
For simplicity, stable eigenvalues $\{\lambda_i\}_{i=1}^m$ of the linearized matrix of the desingularized vector field at $p_\ast$ are assumed to be simple and real.
In particular, $\lambda_i < 0$ for $i=1,\KMa{\ldots}, m$.
Recalling \eqref{eq:diagonalL}, write
\begin{equation*}
\Lambda = \begin{pmatrix}
\lambda_1 & & \\
& \ddots & \\
& & \lambda_m
\end{pmatrix}.
\end{equation*}
Then the solution $(s(\tau), \hat x(\tau)) \in W^s_{\rm loc}(p_\ast; g)$ is written by 
\begin{equation*}
(s(\tau), \hat x(\tau)) = P\left( e^{\Lambda \tau}\theta\right)\text{ with }s(\tau) = P_1\left( e^{\Lambda \tau}\theta\right),\quad \theta = (\theta_1,\KMa{\ldots}, \theta_m) \in B_1^m, 
\end{equation*}
where 
\begin{equation}
\label{param_expression}
P(\theta) = \sum_{|\alpha| \geq 0}a_{\alpha}\theta^{\alpha} \equiv \begin{pmatrix}
P_1(\theta) \\ \vdots \\ P_n(\theta)
\end{pmatrix}\in \mathbb{R}^n,\quad \theta = \begin{pmatrix}
\theta_1 \\ \vdots \\ \theta_m
\end{pmatrix}\in \mathbb{R}^{m},\quad 
a_{\alpha} = \begin{pmatrix}
(a_1)_\alpha \\ \vdots \\ (a_n)_\alpha
\end{pmatrix}\in \mathbb{R}^n,
\end{equation}
is the parameterization of $W_{\rm loc}^s(p_\ast; g)$.
The rightmost integral in (\ref{time-desing-directional-integral}) can be calculated as follows, once we obtain a concrete form of $P$:
\begin{align*}
\int_{\bar \tau}^\infty s(\tau)^k d\tau &= \int_{\bar \tau}^\infty \left( P_1\left( e^{\Lambda (\tau - \bar \tau)}\theta\right) \right)^k d\tau \equiv \int_0^\infty \left( P_1\left( e^{\Lambda \tilde \tau}\theta\right) \right)^k d\tilde \tau \\
 &= \int_0^\infty \left(  \sum_{|\alpha| \geq 0} (a_1)_\alpha \left( e^{\Lambda \tilde \tau}\theta \right)^{\alpha} \right)^k d\tilde \tau\\
	&= \int_0^\infty \left( \sum_{|\alpha| \geq 0} (a_1)_\alpha e^{\left(\alpha \cdot \lambda \right)\tilde \tau}\theta^{\alpha} \right)^k d\tilde \tau.
\end{align*}
Denote the {\em Cauchy product over multi-indices} by
\begin{equation}
\label{conv-multi}
(a\ast b)_\alpha = \sum_{\beta + \gamma = \alpha}a_{\beta} b_{\gamma},\quad a_\beta, b_\gamma \in \mathbb{R}\quad \text{ for }\quad \alpha, \beta, \gamma\in \mathbb{Z}_{\geq 0}^m,
\end{equation}
and given $k \in \N$ denote 
\[
\left(a^k \right)_\alpha = (\stackrel{k \text{ times}}{\overbrace{a \ast \cdots \ast a}} )_\alpha.
\]
Here we observe that $(a_1)_{\bf 0} = 0$, since $P(0) = p_\ast$ is the equilibrium for the desingularized vector field (\ref{ODE-desing-directional}) and $P_1(0) = 0$ from our choice of compactifications.
Using the previous notation and the above fact, the above integral is formally written as follows:
%
\begin{align}
\notag
\int_{\bar \tau}^\infty s(\tau)^k d\tau &= \int_0^\infty \left\{ \sum_{|\alpha| \geq 0} (a_1^k)_\alpha  e^{\left(\alpha \cdot \lambda \right)\tilde \tau}\theta^{\alpha} \right\} d\tilde \tau\\
\notag
	&=  \sum_{|\alpha| \geq 0} (a_1^k)_\alpha  \theta^{\alpha} \left( \int_0^\infty  e^{\left(\alpha \cdot \lambda \right)\tilde \tau} d\tilde \tau \right)\\
\notag
	&=  \sum_{|\alpha| > 0} (a_1^k)_\alpha  \theta^{\alpha} \left( \int_0^\infty  e^{\left(\alpha \cdot \lambda \right)\tilde \tau} d\tilde \tau \right) \\
\label{blow-up-time-dir-asym}
	&= -\sum_{|\alpha| > 0} (a_1^k)_\alpha  \frac{\theta^{\alpha}}{\alpha \cdot \lambda}.
\end{align}
In particular, the denominator $\alpha \cdot \lambda$ is strictly negative for all possible $\alpha$, and the analyticity of $P$ implies that the above infinite sum is convergent uniformly in $B^m_1$.
\par
The final formula (\ref{blow-up-time-dir-asym}) implies that we can calculate the rigorous value of $t_{\max}$ near blow-up, once we obtain the parameterization of the local stable manifold $W_{\rm loc}^s(p_\ast; g)$ and fix the point $\theta \in B^m$, namely $P(\theta) \in W_{\rm loc}^s(p_\ast; g)$.
As seen below, the similar expressions of $t_{\max}$ to (\ref{blow-up-time-dir-asym}) can be obtained for Poincar\'{e}-type and parabolic-type compactifications.

\begin{rem}[Special case]
If $n_s = 1$, the explicit expression (\ref{blow-up-time-dir-asym}) admits the simpler form:
\begin{align*}
(a\ast b)_n = \sum_{j \ge 0} a_j b_{n-j}, \quad a = (a_j)_{j \ge 0}, b = (b_j)_{j \ge 0}.
\end{align*}
Indeed, $\alpha$ becomes a single index $l$ and
\begin{align*}
t_{\max} &= 
	 \sum_{|\alpha| > 0} (a_1^k)_\alpha  \theta^{\alpha} \left( \int_0^\infty  e^{\left(\alpha \cdot \lambda \right)\tilde \tau} d\tilde \tau \right) = -\frac{1}{\lambda} \sum_{l = k}^{\infty} (a_1^k)_l \frac{\theta^{l}}{l},
\end{align*}
where we have used the fact that $(a_1)_0 = 0$ and that the Cauchy product 
$(a_1^k)_l  = (\stackrel{k \text{ times}}{\overbrace{a_1 \ast \cdots \ast a_1}} )_l$, with $l < k$ contains at least one $(a_1)_0$.
\end{rem}

\begin{rem}[Integrands, and smoothness of $t_{\max}$]
\label{rem:smoothness}
The concrete procedure to compute the integral (\ref{blow-up-time-dir-asym}) or its upper bound depends on problems, namely the choice of compactifications and time-scale desingularizations.
\begin{itemize}
\item Our first example (Section \ref{sec:demo1}) applies a directional compactification, while the time-scale desingularization has the different form from \eqref{time-desing-directional} so that the resulting desingularized vector field is polynomial.
Instead, $t_{\max}$ requires integrations of {\em rational}-type functions.
Nevertheless, the essence of the above argument, namely {\em the absence of constant terms in the integrand of $t_{\max}$}, can be applied to verifying that $t_{\max} < \infty$.
Analyticity of \KMa{the} integrand follows from that for both the numerator and the denominator with additional boundedness property of the denominator.
Detailed derivation of $t_{\max}$ or its upper bound is shown in subsequent sections.
\item In the case of Poincar\'{e}-type compactifications, analyticity of $t_{\max}$ is not guaranteed when $k/2c\not \in \mathbb{N}$, \KMa{because} the function $h(x) = x^{k/2c}$ is not analytic at $x=0$.
This failure comes from the \lq\lq mismatch" of properties of vector fields in the sense that the order $k+1$ and the type $\alpha$, consequently the natural number $c$, determining an appropriate Poincar\'{e}-type compactifications are determined by the asymptotic quasi-homogeneity of vector fields.
We then need further estimates for calculating $t_{\max}$ in such a case. 
The difficulty originated from this issue can be overcome by choosing the parabolic-type compactifications.
\end{itemize}
\end{rem}

\begin{rem}[Lyapunov functions versus parameterizations for expressing $t_{\max}$]
\KMa{In the preceding studies (e.g. \cite{MT2020_1, MT2020_2, TMSTMO2017}), $t_{\max}$ in all examples there are enclosed by means of Lyapunov functions. 
Local Lyapunov functions only provide} upper bounds of $t_{\max}$, \KMa{because} they do not trace concrete trajectories on stable manifolds, but values of functionals on trajectories, implying that smoothness arguments for $t_{\max}$ as a function of initial points cannot be derived.
Instead, simple inequalities by means of Lyapunov functions provide upper bounds of $t_{\max}$ even in the case of Poincar\'{e}-type compactifications with $k/2c\not \in \mathbb{N}$, as demonstrated in \cite{TMSTMO2017}.
Moreover, non-resonance condition (A3) is not required for estimations.
\par
On the other hand, we can trace trajectories on stable manifolds by means of parameterizations, indicating that $t_{\max}$ is \lq\lq exactly" calculated through the integration of given functions depending on \KMa{solutions}.
In particular, we can explicitly discuss properties of $t_{\max}$ as a functions of initial points.
In compensation for these precise information, however, we have to take care of analytic information of dynamical systems to ensure smoothness or analyticity of functions of interests, such as non-resonance condition (A3) for analyticity of $P$ providing the conjugacy to linearizations, matching of integers $k$ and $c$ for Poincar\'{e}-type compactifications mentioned in Remark \ref{rem:smoothness}.
\end{rem}

\section{Example 1: validation and visualization of globally extended saddle-type blow-ups}

\label{sec:demo1}
In what follows, we show several applications of our proposed methodology not only to show its applicability but also to reveal several remarkable features of saddle-type blow-up solutions.
The first problem is concerned with saddle-type blow-up solutions for the following system:
\begin{equation}
\label{two-fluid}
\begin{cases}
\beta' = vB_1(\beta) - c\beta - c_1, & \\
v' =  v^2 B_2(\beta) - cv - c_2, & 
\end{cases}\quad {}'=\frac{d}{d\zeta},
\end{equation}
where
\begin{equation*}
B_1(\beta) = \frac{(\beta-\rho_1)(\beta-\rho_2)}{\beta},\quad B_2(\beta) = \frac{\beta^2- \rho_1\rho_2}{2\beta^2}
\end{equation*}
and $\rho_2 > \rho_1$ are positive constants.
Moreover, 
\begin{equation}
\label{speed-two-phase}
c = \frac{v_R B_1(\beta_R) - v_L B_1(\beta_L)}{\beta_R - \beta_L}
\end{equation}
and $(c_1,c_2) = (c_{1L}, c_{2L})$ with 
\begin{equation}
\label{constants-two-phase}
\begin{cases}
c_{1L} = v_L B_1(\beta_L) - c\beta_L, & \\
c_{2L} = v_L^2 B_2(\beta_L) -cv_L. & \\
\end{cases}
\end{equation}
Points $(\beta_L, v_L)$ and $(\beta_R, v_R)$ are given in advance.

\begin{rem}
The system (\ref{two-fluid}) stems from the Riemann problem of the following system of conservation laws describing the (simplified) two-phase, one-dimensional imcompressible flow \cite{KSS2003}:
\begin{equation}
\label{two-fluid-PDE}
\beta_t + (vB_1(\beta))_x = 0,\quad
v_t + (v^2 B_2(\beta))_x = 0
\end{equation}
with
\begin{equation}
\label{data-two-fluid-PDE}
(\beta(x,0), v(x,0)) = 
\begin{cases}
U_L \equiv (\beta_L, v_L) &\text{$x<0$},\\
U_R \equiv (\beta_R, v_R) &\text{$x>0$}.
\end{cases}
\end{equation}
Observe that $B_1(\beta) < 0$ for $\beta\in (\rho_1, \rho_2)$ and $B_1(\beta) > 0$ for $0< \beta < \rho_1, \beta > \rho_2$. 
Details are stated in \cite{KSS2003}. 
\par
The system (\ref{two-fluid}) is the reduced problem of (\ref{two-fluid-PDE}) satisfying {\em viscosity profile criterion}, namely the traveling wave problem with respect to the frame coordinate $\zeta = x-ct$ with the boundary condition
\begin{equation*}
\lim_{\zeta \to -\infty}(\beta(\zeta), v(\zeta)) = (\beta_L, v_L),\quad \lim_{\zeta \to +\infty}(\beta(\zeta), v(\zeta)) = (\beta_R, v_R),
\end{equation*}
where $c$ is the speed of traveling waves.
Saddle-type blow-up solutions for (\ref{two-fluid}) are considered as components of {\em singular shock wave} solutions\footnote{
To make the correspondence precisely, the extended fast-slow system setting is required.
Detail is shown in \cite{KSS2003}.
} to (\ref{two-fluid-PDE}).
\end{rem}
We choose the directional compactification (\ref{dir-cpt}) of type $(0,1)$ : $(\beta, v)\mapsto (x_1, s) = (\beta, v^{-1})$ (cf. \cite{KSS2003, Mat2018}).
Direct calculations yield the following desingularized vector field on $\{r\geq 0\}\times \{\rho_1\leq \beta \leq \rho_2\}$:
\begin{equation}
\label{two-fluid-desing}
\begin{cases}
\displaystyle{\frac{dx_1}{d\tau} = B_1(x_1) - cx_1 s - c_1s}, & \\
\displaystyle{\frac{ds}{d\tau} =  -s\left\{B_2(x_1) - cs - c_2s^2\right\}}, & 
\end{cases}
\end{equation}
where $\tau$ is the desingularized time-scale given by $d\tau = s^{-1}dt$.
Obviously, $(x_1, s) = (\rho_1,0)\equiv p_1$ and $(\rho_2, 0)\equiv p_2$ are equilibria of (\ref{two-fluid-desing}) on the horizon $\mathcal{E} = \{s=0\}$ and the vector field on $\mathcal{E}\setminus \{p_1,p_2\}$ is monotone on each component.
\par
On the other hand, the vector field (\ref{two-fluid-desing}) is {\em rational}.
In order to nicely apply the parameterization method, we introduce {\em further} time-scale transformation as follows:
\begin{equation*}
\frac{d\tau}{d\eta} = x_1^{-2}.
\end{equation*}
Then the resulting vector field is
\begin{equation}
\label{two-fluid-desing-poly}
\begin{cases}
\displaystyle{\frac{dx_1}{d\eta} = x_1(x_1-\rho_1)(x_1-\rho_2) - cx_1^3 s - c_1x_1^2 s}, & \\
\displaystyle{\frac{ds}{d\eta} =  -s\left\{ \frac{1}{2}(x_1^2 - \rho_1\rho_2) - x_1^2 (c s + c_2 s^2)\right\}}. & 
\end{cases}
\end{equation}
Note that typical solutions of (\ref{two-fluid-desing}) are considered within the region $\{\rho_1 \leq x_1\leq \rho_2\}$ and $\rho_1 > 0$.
Therefore the new vector field (\ref{two-fluid-desing-poly}) is intrinsically the time-reparameterized vector field of (\ref{two-fluid-desing}) and hence these vector fields provide topologically the same information as each other.
\par
The horizon is $\{s=0\}$ and equilibria on the horizon is $(x_1, s) = (\rho_1, 0), (\rho_2, 0)$.
Looking at (\ref{two-fluid-desing-poly}) {\em only}, $(x_1, s) = (0, 0)$ can be also a stationary point, but it is not appropriate from our requirement.

\begin{rem}[Technical details]
\label{rem-tech-two-phase}
When we solve the problem (\ref{two-fluid-desing-poly}) in practice, we need to fix several parameters.
In the present case,
\begin{itemize}
\item First, we fix $x_L \equiv (x_{1,L}, s_L) = (1.9, 0.25)$ as a sample data.
Then, following the directional compactification $(x_1, s) = (\beta, v^{-1})$, we obtain $(\beta_L, v_L) = (1.9, 4)$.
Next, we fix $x_R \equiv (x_{1,R}, s_R) = (1.5, 0.2)$ similarly.
Then we obtain $(\beta_R, v_R) = (1.5, 5)$.
Independently, we need to fix $(\rho_1, \rho_2)$. 
In the present case, we fix $(\rho_1, \rho_2) = (1,2)$.
\item Following standard arguments of systems of conservation laws, compute  $B_1(\beta), B_2(\beta)$ and $c$ given above for $(\beta, v) = (\beta_L, v_L), (\beta_R, v_R)$.
\end{itemize}
\end{rem}


In the present study, we compute the stable manifold of the saddle equilibrium on the horizon $(x_1, s) = (2,0)$ for (\ref{two-fluid-desing-poly}) in $\{s\geq 0\}$ with parameters shown in Remark \ref{rem-tech-two-phase}.

\subsection{A local one-dimensional stable manifold of $p_2$ in \eqref{two-fluid-desing-poly}}

Consider the system of desingularized ODEs 
\begin{equation} \label{eq:ODE_1d_manifold}
\dot{x} = g(x) = \begin{pmatrix} g_1(x_1,x_2) \\  g_2(x_1,x_2) \end{pmatrix} \bydef 
\begin{pmatrix}
{\displaystyle
x_1^3 - (\rho_1+\rho_2) x_1^2 + \rho_1 \rho_2 x_1 - cx_1^3 x_2 - c_1 x_1^2 x_2
}
\vspace{.1cm}
\\
{\displaystyle
-\frac{1}{2}x_1^2x_2  + \frac{1}{2} \rho_1\rho_2 x_2 + c x_1^2 x_2^2 + c_2 x_1^2 x_2^3
}
\end{pmatrix},
\end{equation}
\KMa{which is exactly (\ref{two-fluid-desing-poly}) by replacing $(x_1,s)$ with $(x_1, x_2)$.
The dot $\dot {}$ denotes $d /d\eta$.}
Furthermore, at $x^{(2)} \bydef (\rho_2,0)$
\[
Dg(x^{(2)}) = 
\begin{pmatrix} 
\rho_2(\rho_2-\rho_1) & -\rho_2^2(c \rho_2+c_1) \\ 0 & -\frac{\rho_2}{2}(\rho_2-\rho_1)
\end{pmatrix}.
\]
We focus on the one-dimensional stable manifold of the steady state $x^{(2)}$ with stable eigenvalue $\lambda \bydef -\frac{\rho_2}{2}(\rho_2-\rho_1)<0$ 
and corresponding stable eigenvector 
\[
v \bydef \begin{pmatrix} -\rho_2^2(c \rho_2+c_1) \\  -\frac{3 \rho_2}{2}(\rho_2-\rho_1) \end{pmatrix}.
\]
Our goal is to produce an analytic function $P\colon (-\nu,\nu)\to \R^2$ that parameterizes $W_{loc}^s(x^{(2)})$.
The Taylor series representation has the form
\[
P(\theta) = \sum_{n=0}^\infty a_n \theta^n \quad \text{where}\ a_n = \begin{pmatrix} (a_1)_n \\ (a_2)_n \end{pmatrix}.
\]
By Lemma~\ref{lem:parmLemma} $P$ will represent the stable manifold if 
\[
P(0) = \begin{pmatrix} \rho_2 \\ 0 \end{pmatrix},\quad DP(0) = v = \begin{pmatrix} -\rho_2^2(c \rho_2+c_1) \\  -\frac{3 \rho_2}{2}(\rho_2-\rho_1) \end{pmatrix}
, \quad\text{and}\quad \lambda \theta \frac{\partial P}{\partial \theta} (\theta) = g(P(\theta)).
\]
From this we can immediately conclude that
\begin{equation}
\label{eq:manifold:a0a1}
\begin{pmatrix} (a_1)_0 \\ (a_1)_0 \end{pmatrix} = \begin{pmatrix} \rho_2 \\ 0 \end{pmatrix},\quad
\begin{pmatrix} (a_1)_1 \\ (a_2)_1 \end{pmatrix} = \begin{pmatrix} -\rho_2^2(c \rho_2+c_1) \\ -\frac{3 \rho_2}{2}(\rho_2-\rho_1) \end{pmatrix},\quad\text{and}\quad
\lambda  \sum_{n=0}^\infty n a_n \theta^n = g \left( \sum_{n=0}^\infty a_n \theta^n \right),
\end{equation}
where 
\[
g \left( \sum_{n=0}^\infty a_n \theta^n \right) = 
\sum_{n=0}^\infty \begin{pmatrix} 
{
\displaystyle
(a_1^3)_n-(\rho_1+\rho_2)(a_1^2)_n + \rho_1 \rho_2 (a_1)_n - c (a_1^3a_2)_n - c_1 (a_1^2a_2)_n
}
\vspace{.2cm}
\\ 
{
\displaystyle
-\frac{1}{2} (a_1^2a_2)_n + \frac{1}{2} \rho_1\rho_2 (a_2)_n + c (a_1^2a_2^2) + c_2 (a_1^2a_2^3)
}
\end{pmatrix} \theta^n.
\]

Let 
\[
\ell^1 \bydef \left\{ b = (b_n)_{n \ge 2} ~:~ \|b\|_1 \bydef \sum_{n \ge 2} |b_n|  < \infty \right\}. 
\]
For $j=1,2$, denote $a_j=((a_j)_n)_{n \ge 2}$, and $a = (a_1,a_2)$.
Define $F= (F_1,F_2)\colon (\ell^1)^2 \to (\ell^1)^2$ by
\begin{align}
\label{eq:F1_1d_stable_manifold}
(F_1(a))_n &\bydef \lambda n (a_1)_n - \left( (a_1^3)_n-(\rho_1+\rho_2)(a_1^2)_n + \rho_1 \rho_2 (a_1)_n - c (a_1^3a_2)_n - c_1 (a_1^2a_2)_n \right) 
\\
\label{eq:F2_1d_stable_manifold}
(F_2(a))_n &\bydef \lambda n (a_2)_n - \left( -\frac{1}{2} (a_1^2a_2)_n + \frac{1}{2} \rho_1\rho_2 (a_2)_n + c (a_1^2a_2^2) + c_2 (a_1^2a_2^3) \right) 
\end{align}
for $n \ge 2$, and observe that if there exists $\ta \in (\ell^1)^2$ such that $F(\ta)=0$, then we have obtained the desired parameterization.

\subsubsection{A computer-assisted proof}

Fixing $N=300$, we computed the bounds $Y_0$, $Z_0$, $Z_1$ and $Z_2$ as presented in Sections~\ref{sec:Y0},~\ref{sec:Z0},~\ref{sec:Z1}~and ~\ref{sec:Z2}, respectively. Then, we applied Theorem~\ref{thm:radii_polynomials} to prove existence of $\ta \in B_r(\ba)$ such that $F_1(\ta)=F_2(\ta)=0$ with $F_1$ and $F_2$ given in \eqref{eq:F1_1d_stable_manifold} and \eqref{eq:F2_1d_stable_manifold}, respectively. More explicitly, we got that $\| \ta - \ba \|_X \le r = 4.2 \times 10^{-13}$. 

The Taylor series representation of the parameterization of the local stable manifold has the form
\[
P(\theta) = \sum_{n=0}^\infty \ta_n \theta^n \quad \text{where}\ \ta_n = \begin{pmatrix} (\ta_1)_n \\ (\ta_2)_n \end{pmatrix}
\]
and denote by
\[
P^{(N)}(\theta) = \sum_{n=0}^N \ba_n \theta^n \quad \text{where}\ \ba_n = \begin{pmatrix} (\ba_1)_n \\ (\ba_2)_n \end{pmatrix}
\]
the numerical approximation of the local stable manifold. Then, 
\begin{align*}
\| P - P^{(N)} \|_\infty &= \sup_{\theta \in (-\nu,\nu)} \| P(\theta) - P^{(N)}(\theta) \|_\infty
\\
& = \sup_{\theta \in (-\nu,\nu)} \max \left( | P_1(\theta) - P_1^{(N)}(\theta) |,| P_2(\theta) - P_2^{(N)}(\theta) | \right)
\\
& \le \sup_{\theta \in (-\nu,\nu)} \max \left( \sum_{n=0}^\infty | (\ta_1)_n - (\ba_1)_n | |\theta|^n ,\sum_{n=0}^\infty | (\ta_2)_n - (\ba_2)_n | |\theta|^n \right)
\\
& \le \max \left( \sum_{n=0}^\infty | (\ta_1)_n - (\ba_1)_n | \nu^n ,\sum_{n=0}^\infty | (\ta_2)_n - (\ba_2)_n | \nu^n \right)
\\
& = \max \left( \| \ta_1 - \ba_1 \|_\nu,\| \ta_2 - \ba_2 \|_\nu \right) 
\\
& = \| \ta - \ba \|_X \le r = 4.2 \times 10^{-13}.
\end{align*}

\begin{figure}[htbp]\em
\centering
\includegraphics[width=7.0cm]{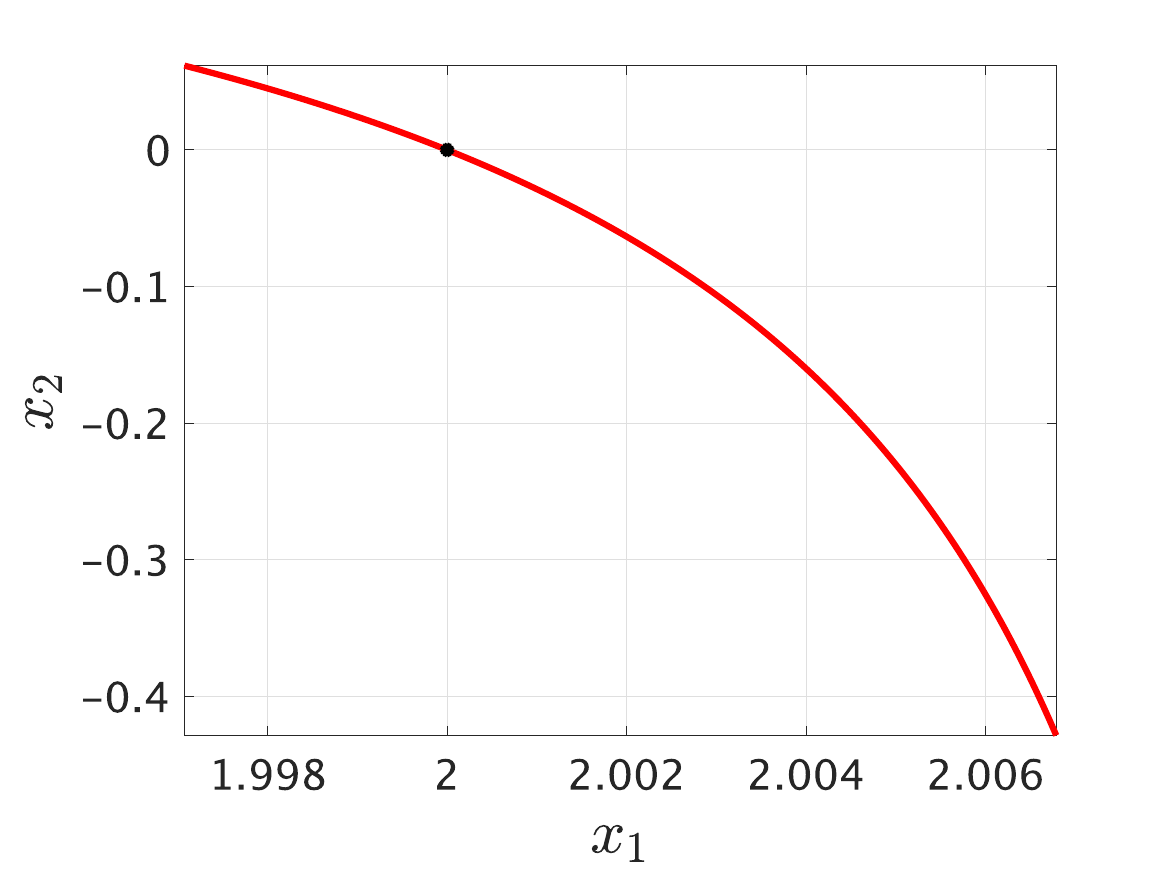}
\caption{The rigorously computed stable manifold with rigorous error bound $\| P - P^{(N)} \|_\infty \le r = 4.2 \times 10^{-13}$.} 
\flushleft
\KMa{Note that the plotted local stable manifold is defined for the desingularized vector field (\ref{eq:ODE_1d_manifold}), which itself makes sense for both positive and negative $x_2$. 
On the other hand, this makes sense only in $\{x_2 > 0\}$ as the corresponding object to the original vector field (\ref{two-fluid}), while the horizon $\{x_2 = 0\}$ corresponds to the infinity in the original $(\beta, v)$-phase space.}
\label{fig-1d-stable-manifold-ex1}
\end{figure}

\subsubsection{Computing the blow-up time}

Given a point $(x_1(0),s(0)) \in W_{loc}^s(p_2)$ (with $p_2=(2,0)$), the blow-up time is given by
\begin{equation}
\label{blow-up-time-demo1}
t_{\max} = \int_{0}^\infty \frac{s(\eta)}{x_1(\eta)^2} ~ d \eta.
\end{equation}
Given that $(x_1(0),s(0)) = (P_1(\theta),P_2(\theta))$ for a given $\theta \in (-\nu,\nu)$, we get from \eqref{eq:conjRelation} that 
$\varphi \left( t, P(\theta) \right) = P\left( e^{\lambda t} \theta \right)$ for all $t \ge 0$. 
Hence, the solution $(x_1(t),s(t))$ with \KMa{the initial point} $(x_1(0),s(0))= (P_1(\theta),P_2(\theta))$ is given by 
$(x_1(t),s(t)) = P\left( e^{\lambda t} \theta \right)$.

Rescaling the time interval $\eta \in [0,\infty]$ to $u \in [\theta,0]$ leads (via the change of coordinates $u = e^{\lambda \eta} \theta$) to
\begin{equation} \label{eq:t_max_v1}
t_{\max} = \int_{0}^\infty \frac{s(\eta)}{x_1(\eta)^2} ~ d \eta =
\int_{0}^\infty \frac{P_2 \left( e^{\lambda \eta} \theta \right)}{[P_1 \left( e^{\lambda \eta} \theta \right)]^2} ~ d \eta = 
\int_\theta^0 \frac{1}{\lambda u} \frac{P_2(u)}{[P_1(u)]^2}~du.
\end{equation} 
Now, note that
\[
P_2(u) = \sum_{n\ge 0} (\ta_2)_n u^n = \sum_{n\ge 1} (\ta_2)_n u^n
\]
since $(\ta_2)_0 = (p_2)_2 = 0$. Denote 
\[
Q(u) \bydef \frac{P_2(u)}{u} = \frac{1}{u} \sum_{n\ge 1} (\ta_2)_n u^n = \sum_{n\ge 0} (\ta_2)_{n+1} u^n = \sum_{n\ge 0} \tilde q_n u^n 
\]
where $\tilde  q_n \bydef (\ta_2)_{n+1}$ for $n \ge 0$. Hence, equation \eqref{eq:t_max_v1} becomes
\begin{equation*} 
t_{\max} = \frac{1}{\lambda} \int_\theta^0  \frac{Q(u)}{[P_1(u)]^2}~du.
\end{equation*} 
Assume now that we have (again using rigorous numerics) obtained 
\[
R(u) \bydef \frac{Q(u)}{[P_1(u)]^2} = \sum_{n\ge 0} r_n u^n
\]
with rigorous error bounds. Using that information, 
\begin{equation} \label{eq:t_max_v2}
t_{\max} = \frac{1}{\lambda} \int_\theta^0 \sum_{n\ge 0} r_n u^n~du = 
\frac{1}{\lambda} \sum_{n\ge 0} r_n \int_\theta^0  u^n~du = 
-\frac{1}{\lambda} \sum_{n\ge 0} \frac{r_n}{n+1} \theta^{n+1},
\end{equation} 
which is in essence computable (that is we can provide a numerical approximation together with rigorous error bounds). In the Figure~\ref{fig:t_max} below, we present a rigorous numerical computation (with rigorous bounds) of the value of $t_{\max}$ as a function of $\theta$, that is as a function of the initial \KMa{points} $P(\theta)$ on $W^s_{loc}(p_2)$. The rigorous error bound is obtained by computing rigorously the Taylor coefficients of $r_n$ in the expansion \eqref{eq:t_max_v2}. We present how to do that next. 

\begin{figure}[htbp]\em
\centering
\includegraphics[width=7.0cm]{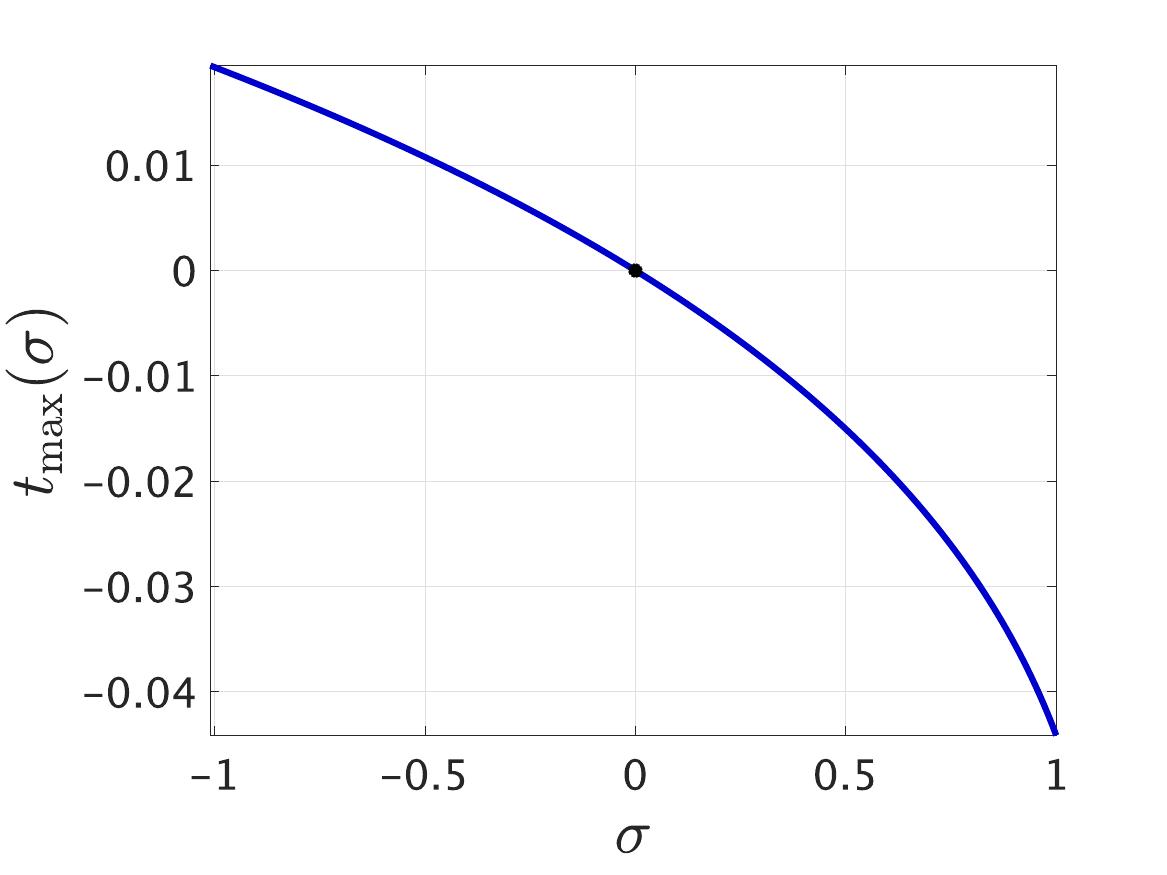}
\caption{The numerical values of $t_{\max}$ according to formula \eqref{eq:t_max_v2}.}
\label{fig:t_max}
\flushleft
Here the coordinate $\sigma$ denotes $\theta/\nu$.
\KMa{As in Figure \ref{fig-1d-stable-manifold-ex1}, the graph makes sense only in the region $\{t_{\max}(\sigma) \geq  0\}$ as the object defined by the blow-up solution of the original vector field (\ref{two-fluid}).
In the present validation result, initial points of the blow-up solution are distributed in the half-polydisk $\{\sigma \leq 0\}$ in the parameter space.
}
\end{figure}

\subsubsection{Rigorous computation of the coefficients \boldmath$r_n$\unboldmath}
\label{sec:rig_time_coeff_demo1}

Given $\ta=(\ta_1,\ta_2)$ with $\| \ta - \ba \|_X \le r = 4.2 \times 10^{-13}$ the power series coefficients of $P_i(u) = \sum_{n \ge 0} (\ta_i)_n u^n$. The goal in this section is to compute rigorously the coefficients $r_n$ of $R(u) = \sum_{n\ge 0} r_n u^n$ such that $[P_1(u)]^2 R(u) = Q(u)$. This amounts to solve the Taylor coefficients equation
\begin{equation} \label{eq:f_solve_for_r}
\psi(r) \bydef \ta_1^2 r - \tilde q = 0 . 
\end{equation} 
Using Newton's method, assume that we computed $\br$ such that $\psi(\br)\approx 0$
Denote by $D\psi^{(N)}(\br)$ the Jacobian of $\psi^{(N)}$ at $\br$. The next step is to construct the linear operator $A^\dag$ (an approximate derivative of the derivative $D\psi(\br)$), and the linear operator $A$ (an approximate inverse of $D\psi(\br)$). Let $\dagA$ be defined as
\[
(A^\dagger h)_n = 
\begin{cases}
\bigl(D\psi^{(N)}(\br) h^{(N)} \bigr)_n &\quad\text{for }   0 \leq n \le N ,  \\
(\ba_1^2)_0  &\quad\text{for }  n > N,
\end{cases}
\]
Consider now a matrix $A^{(N)} \in M_{N+1}(\R)$ computed so that $A^{(N)} \approx {D\psi^{(N)}(\br)}^{-1}$. This allows defining the linear operator $A$ whose action on an element $h \in \ell_\nu^1$ 
 \[
 (A h)_n =
 \begin{cases}
\left(A^{(N)} h^{(N)} \right)_n & \text{for }   0 \leq n \le N    \\
\frac{1}{(\ba_1^2)_0} h_n  & \text{for }  n > N.
 \end{cases}
 \]
Having obtained an approximate solution $\br$ and the linear operators $\dagA$ and $A$, the next step is to construct the bounds $Y_0$, $Z_0$, $Z_1$ and $Z_2(r)$ satisfying \eqref{eq:general_Y_0}, \eqref{eq:general_Z_0}, \eqref{eq:general_Z_1} and \eqref{eq:general_Z_2}, respectively. Note that since problem \eqref{eq:f_solve_for_r} is linear, then $Z_2=0$. \\

\noindent {\bf The bound \boldmath$Y_0$\unboldmath.} We look for a bound such that $\| A \psi(\br)\|_\nu \le Y_0$. Expand
\[
\psi(\br) = \ta_1^2 \br - \tilde q = (\ba_1+\delta_1)^2 \br - (\bar q + \delta_q) = \bar \psi (\br) + \psi^\delta(\br),
\]
where 
\[
\bar \psi (\br) \bydef \ba_1^2 \br - \bar q
\quad \text{and} \quad
\psi^\delta(\br) \bydef 2 \delta_1 \ba_1 \br + \delta_1^2 \br - \delta_q
\]
and $\delta_1 \bydef \ta_1-\ba_1$ and $\delta_q \bydef \tilde q - \bar q$. Hence, we can compute $Y_0$ such that
\begin{align*}
\|A \psi(\br)\|_\nu &\le \|A \bar \psi(\br)\|_\nu + \|A \|_{B(\ell_\nu^1)} \| \psi^\delta(\br)\|_\nu
\\
&\le \|A \bar \psi(\br)\|_\nu + \|A \|_{B(\ell_\nu^1)} \left( 2 \| \ba_1\|_\nu \| \br \|_\nu + \| \br \|_\nu r_0 + \frac{1}{\nu}  \right) r_0 \le Y_0,
\end{align*}
where we used that 
\[
\| \delta_q \|_\nu = \sum_{n \ge 0} | (\ta_2)_{n+1} - (\ba_2)_{n+1} | \nu^n = \frac{1}{\nu} \sum_{n \ge 0} | (\ta_2)_{n+1} - (\ba_2)_{n+1} | \nu^{n+1} \le \frac{1}{\nu} \| \ta_2 - \ba_2 \|_\nu \le  \frac{r_0}{\nu}.
\]

\noindent {\bf The bound \boldmath$Z_0$\unboldmath.} It is the same computation as the one presented in Section~\ref{sec:Z0}. \\

\noindent {\bf The bound \boldmath$Z_1$\unboldmath.} Given $h \in \ell_\nu^1$, denote 
\[
z \bydef D\psi(\br)h - \dagA h
\]
which is given component wise by 
\[
z_n =
\begin{cases}
\left( ( \ta_1^2 -  \ba_1^2)h \right)_n = \left( 2 \delta_1 \ba_1 h + \delta_1^2 h \right)_n
&\quad\text{for }   0 \leq n \le N ,  \\
( \ta_1^2 h)_n -  (\ba_1^2)_0 h_n = ( 2 (\delta_1)_0 (\ba_1)_0  + (\delta_1)_0^2   )h_n + \sum_{k=1}^n (\ta_1^2)_k h_{n-k}
&\quad\text{for }  n > N.
\end{cases}
\]
Define $\beta_k = (\ta_1^2)_k$ for $k>0$ and $\beta_0=0$. 
Hence, 
\begin{align*}
\|A z\|_\nu &\le \| A\|_{B(\ell_\nu^1)} \| 2 \delta_1 \ba_1 h + \delta_1^2 h \|_\nu + \frac{1}{(\ba_1^2)_0} \sum_{n \ge N+1} |(\beta * h)_n| \nu^n 
\\
& \le \| A\|_{B(\ell_\nu^1)} \left( 2 r_0 \|\ba_1\|_\nu + r_0^2 \right) + \frac{1}{(\ba_1^2)_0} \| \beta\|_\nu ,
\end{align*}
where
\[
\| \beta\|_\nu = \sum_{n \ge 1} |(\ta_1^2)_n| \nu^n
\le 
\sum_{n = 1}^{2N+2} |(\ba_1^2)_n| \nu^n + 2 \|\ba_1\|_\nu r_0 + r_0^2. 
\]
We therefore set
\[
Z_1 \bydef
 \| A\|_{B(\ell_\nu^1)} \left( 2 r_0 \|\ba_1\|_\nu + r_0^2 \right) + \frac{1}{(\ba_1^2)_0} \left( \sum_{n = 1}^{2N+2} |(\ba_1^2)_n| \nu^n + 2 \|\ba_1\|_\nu r_0 + r_0^2 \right).
\]

Assume that using the radii polynomial approach of Theorem~\ref{thm:radii_polynomials}, we prove the existence 
$\tilde r \in B_{r_{\min}}(\br)$ such that $\psi(\tilde r)=0$. Hence, given $\theta \in (-\nu,\nu)$, $t_{\max}$ given in \eqref{eq:t_max_v2} can be controlled 
\begin{align*}
t_{\max} &= 
-\frac{1}{\lambda} \sum_{n\ge 0} \frac{\tilde r_n}{n+1} \theta^{n+1} 
\\
& \in -\frac{1}{\lambda} \sum_{n = 0}^N \frac{\br_n}{n+1} \theta^{n+1} 
+ 
\frac{1}{|\lambda|} \sum_{n \ge 0} \frac{|\tilde r_n - \br_n|}{n+1} |\theta|^{n+1}[-1,1]
\\
& \in -\frac{1}{\lambda} \sum_{n = 0}^N \frac{\br_n}{n+1} \theta^{n+1} 
+ \frac{r_{\min}}{|\lambda|} [-1,1],
\end{align*}
which can be evaluated rigorously with interval arithmetic.

\begin{rem}
The above estimate directly shows the analyticity of $t_{\max}$ on $\theta$, which is implicitly guaranteed by analyticity of the parameterization $P$ and the uniform boundedness of the denominator $x_1(\eta) = P_1(u)$ away from $0$ on $W_{loc}^s(p_2)$. 
See Figure \ref{fig-1d-stable-manifold-ex1} about the latter fact.
\end{rem}

\subsection{Extension of the stable manifold of $p_2$ in \eqref{two-fluid-desing-poly} and blow-up time validations}

Once we validate the local stable manifold of a saddle equilibrium, we can extend the manifold integrating (\ref{two-fluid-desing-poly}) in the backward time direction, which is achieved by standard rigorous integrator of ODEs.
Recall that we rewrite the system of differential equations \eqref{two-fluid-desing-poly} as in \eqref{eq:ODE_1d_manifold}, that is 
\begin{equation*}
\dot{x} = g(x) = \begin{pmatrix} g_1(x_1,x_2) \\  g_2(x_1,x_2) \end{pmatrix} \bydef 
\begin{pmatrix}
{\displaystyle
	x_1^3 - (\rho_1+\rho_2) x_1^2 + \rho_1 \rho_2 x_1 - cx_1^3 x_2 - c_1 x_1^2 x_2
}
\vspace{.1cm}
\\
{\displaystyle
	-\frac{1}{2}x_1^2x_2  + \frac{1}{2} \rho_1\rho_2 x_2 + c x_1^2 x_2^2 + c_2 x_1^2 x_2^3
}
\end{pmatrix},
\end{equation*}
where $\dot{{}}=\frac{d}{d\eta}$, $x_2\equiv s$, $(\rho_1,\rho_2)=(1,2)$, $(\beta_R, v_R) = (1.5, 5)$, $(\beta_L, v_L) = (1.9, 4)$ with the constant $c$ in (\ref{speed-two-phase})
and $(c_1, c_2) = (c_{1L}, c_{2L})$ satisfying
\[
\begin{cases}
c_{1L} = v_L B_1(\beta_L) - c\beta_L, & \\
c_{2L} = v_L^2 B_2(\beta_L) -cv_L. & \\
\end{cases}
\]

We integrate \eqref{two-fluid-desing-poly} backward in time. 
Taking $\xi\bydef -\eta$,
we integrate
\begin{equation}\label{eq:ODE_on_W1_backward}
\begin{cases}
{\displaystyle
	\frac{dx_1}{d\xi}=-\left(x_1^3 - (\rho_1+\rho_2) x_1^2 + \rho_1 \rho_2 x_1 - cx_1^3 x_2 - c_1 x_1^2 x_2\right),
}
\vspace{.1cm}
\\
{\displaystyle
	\frac{dx_2}{d\xi}=-\left(-\frac{1}{2}x_1^2x_2  + \frac{1}{2} \rho_1\rho_2 x_2 + c x_1^2 x_2^2 + c_2 x_1^2 x_2^3\right).
}
\end{cases}
\end{equation}
from $0$ to $\xi_0$ with the initial point $(x_1(0),x_2(0))=p_0=P(\theta)|_{\theta=-1}$, which is on the local stable manifold $W_{loc}^s(p_2)$.
The rigorous integrator we have used is mentioned in Remark \ref{rem:integrator}.
Furthermore, we rigorously compute the passing time in the original time scale using the following formula:
\[
t_{\xi_0} = \int_{0}^{\xi_0} \frac{x_2(\xi)}{x_1(\xi)^2} ~ d \xi,
\]
where $x_1(\xi)$ and $x_2(\xi)$ denote the solution of \eqref{eq:ODE_on_W1_backward}.

\par
In the present example, (\ref{two-fluid-desing-poly}) is integrated with the initial point at the boundary of locally validated stable manifold, which is the boundary of the red curve in Figure \ref{fig-1d-stable-manifold-ex1} with $x_2 > 0$, in the backward time direction and compute an enclosure of the evolution time in the original time-scale:
\begin{equation*}
t_{-\eta} = \int_{0}^{-\eta} \frac{x_2(\tilde \eta)}{x_1(\tilde \eta)^2} ~ d \tilde \eta.
\end{equation*}
The blow-up time of the corresponding blow-up solution with the initial point $T_d^{-1}(x_1(\eta), x_2(\eta))$ is then enclosed by the sum of enclosures of $t_{\max}$ and $t_{-\eta}$.
Figure \ref{fig:t_max_global_ex1} draws the blow-up time $t_{\max}$ of blow-up solutions as a function of initial points on $T_d^{-1}(W^s(p_2))$.
Note that the point in the figure where the corresponding blow-up time tends to infinity is the source equilibrium for \eqref{two-fluid-desing-poly}, which corresponds to the bounded source for \eqref{two-fluid}.
Rigorous enclosures of $t_{\max}$ on several sample points are shown in Table \ref{Tab:Demo1}.
Finally, we can reconstruct the true blow-up profile of the validated saddle-type blow-up solution through the directional compactification $T_d$, which is drawn in Figure~\ref{fig:blowup_profile_ex1}.
Note that this profile cannot be computed in the direct way since small perturbations of initial points violate the profile\footnote{
As far as we have calculated (in non-rigorous sense), solutions of  (\ref{two-fluid-desing-poly}) through points {\em near} validated solutions (in Figure \ref{fig:t_max_global_ex1}) go to the direction so that the $x_1$-component goes to $+\infty$ directly, or rounding the bounded source (near $P_5$ in Figure \ref{fig:t_max_global_ex1}).
}.

\begin{figure}[htbp]\em
	\centering
	\includegraphics[width=.6\textwidth]{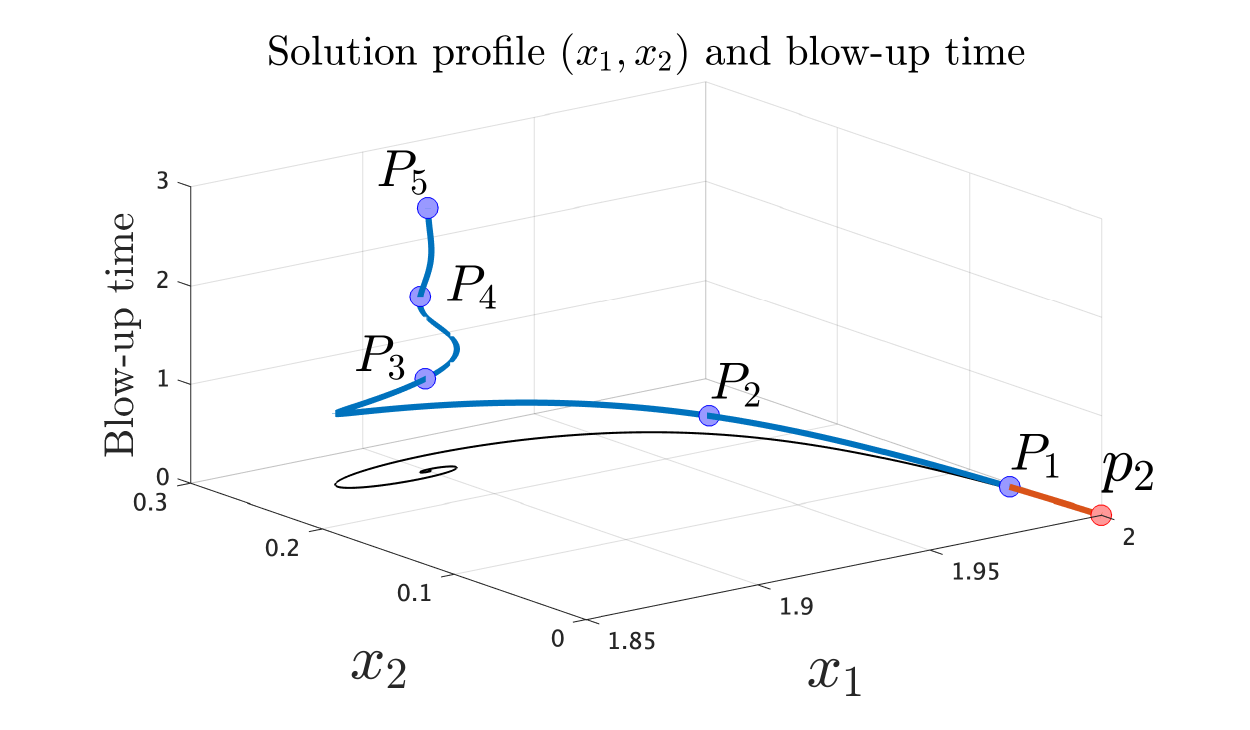}
	\caption{The extended stable manifold $W^s(p_2)$ for \eqref{two-fluid-desing-poly} and corresponding blow-up times}
	\flushleft
	The blue curve is the validated stable manifold $W^s(p_2)$\KMa{, while the black curve is the projection onto the $(x_1, x_2)$-plane}.
	Numbers near points along the curve correspond to those shown in Table \ref{Tab:Demo1} where the rigorous enclosures of blow-up times are shown.
	\label{fig:t_max_global_ex1}
\end{figure}

\begin{table}[ht]
\centering
{
\begin{tabular}{ccccc}
\hline
Points (label) & $x_1$ & $x_2$ & Blow-up time\\
\hline\\[-2mm]
$P_1$ & $1.99704842870_{221}^{362}$ & $0.06209042154_{03}^{164}$ & $0.01945344745_{624}^{758}$\\
$P_2$ & $1.971379977171_{031}^{454}$ & $0.22265490227_{37387}^{46532}$ & $0.1821531459_{776968}^{806739}$\\
$P_3$ & $1.895702934910_{105}^{671}$ & $0.24142735005_{28752}^{30887}$ & $1.00170345745_{2293}^{7477}$\\
$P_4$ & $1.89771158641_{7872}^{819}$ & $0.250316449049_{6631}^{8725}$ & $1.78231786657_{067}^{7252}$\\
$P_5$ & $1.899856004192_{361}^{656}$ & $0.25017265254_{49681}^{51455}$ & $2.6651422937_{42664}^{50833}$\\
\hline 
\end{tabular}%
}
\caption{Blow-up time enclosures for \eqref{two-fluid}}
\flushleft
\lq\lq Points (label)" correspond to points drawn in Figure \ref{fig:t_max_global_ex1}.
\lq\lq Blow-up time" is the validated enclosure of blow-up time for \eqref{two-fluid} through the preimage of points under $T_d$.
\label{Tab:Demo1}
\end{table}%

\begin{figure}[htbp]\em
\centering
\includegraphics[width=.6\textwidth]{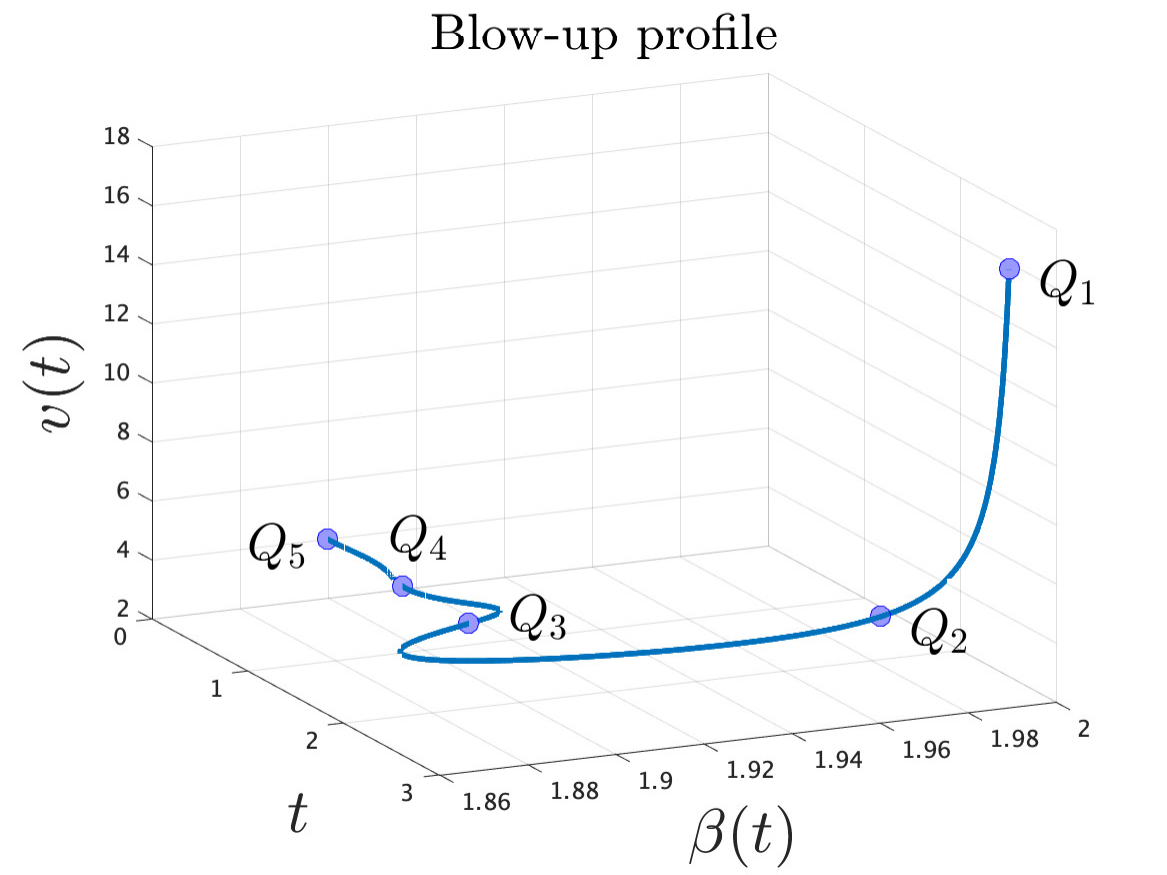}
\caption{Blow-up profile corresponding to Figure \ref{fig:t_max_global_ex1}}
\flushleft
Each point $Q_i$ ($i=1,\KMa{\ldots}, 5$) corresponds to the preimage of $P_i$ in Figure \ref{fig:t_max_global_ex1} under the directional compactification $(x_1, x_2) = (\beta, v^{-1})$.
The initial time $t=0$ is set so that $Q_5 = (\beta(0), v(0))$.
\label{fig:blowup_profile_ex1}
\end{figure}

\begin{rem}
The integrand of $t_{\max}$ has a different form from typical integrands shown in Section \ref{sec:pre_blow_up}. 
Indeed, the integrand of (\ref{blow-up-time-demo1}) is a rational function consisting of two analytic functions.
Nevertheless, the function $x_1(\eta)$ determining the denominator attains the value around $2$ with sufficiently small error bounds so that the function $1/x_1(\eta)^2$ is analytic at $x_1(0)$, which is justified through the parameterization $P$, provided the trajectory $\{x_1(\eta), s(\eta)\}_{\eta\in [0,\infty)}$ is located on the interior of $W_{loc}^s(p_2)$.
In particular, Proposition \ref{prop-blow-up-time-analytic} and Theorem \ref{thm-blow-up-time-analytic} can be still applied to showing that $t_{\max}$ \KMa{defined by} (\ref{blow-up-time-demo1}) depends analytically on initial \KMa{points}.
Note that arguments in Section \ref{sec:rig_time_coeff_demo1} directly confirm the analyticity of $t_{\max}$.
\end{rem}

\section{Example 2: application to higher-dimensional systems}

\label{sec:demo2}
The second example is the following (artificial) system in $\mathbb{R}^3$:
\begin{equation}
\label{Nagumo-at-infty}
\begin{cases}
y_1' = y_1(y_1^2-1), & \\
y_2' = y_1^2 y_2 + y_1^2 y_3, & \\
y_3' = y_1^2 y_3 + \delta^{-1}\left\{cy_1^2 y_3 - y_2(y_2-a y_1)(y_1-y_2) + wy_1^3\right\}. & 
\end{cases}
\end{equation}
The present system is asymptotically homogeneous of order $3$, namely asymptotically quasi-homogeneous of type $\alpha = (1,1,1)$.
We thus apply the Poincar\'{e}-type compactification\footnote{
In the present demonstration, radicals in the Poincar\'{e}-type compactification do not prevent us from $C^1$ studies of dynamical systems.
In particular, the linear stability analysis of equilibria on the horizon makes sense.
Indeed, the lower-order terms in (\ref{Nagumo-at-infty}) are chosen so that our methodology properly works, following discussions in \cite{Mat2018}.
} to obtain the associated desingularized vector field as written by \KMa{(\ref{vectorfield-cw-Poincare})}.
In the present case, $k = 2, n=3$, $\alpha_j = \beta_j = c = 1$ for $j=1,\KMa{\ldots}, n$ and hence
\begin{equation}
\label{central-Nagumo}
\begin{cases}
\tilde f_1(x) = x_1^3 - \left( 1- \sum_{i=1}^3 x_i^2 \right)x_1, & \\
\tilde f_2(x) = x_1^2 x_2 + x_1^2 x_3, & \\
\tilde f_3(x) = x_1^2 x_3 + \delta^{-1}\left\{cx_1^2 x_3 - x_2(x_2-a x_1)(x_1-x_2) + wx_1^3\right\}, & 
\end{cases}
\end{equation}
derived by (\ref{f-tilde-Poincare}), is applied to determining (\ref{vectorfield-cw-Poincare}).
The concrete form is
\begin{align}
\notag
\dot x_1 &= g_1(x) \bydef \tilde f_1(x) - x_1G(x),
\\
\label{desing-demo2}
\dot x_2 &= g_2(x) \bydef \tilde f_2(x) - x_2G(x), 
\\
\notag
\dot x_3 &= g_3(x) \bydef \tilde f_3(x) - x_3G(x), 
\end{align}
where
\begin{align*}
G(x) &\bydef \sum_{j=1}^3 x_j \tilde f_j(x)\\
	&= x_1 \left\{x_1^3 - \left( 1- \sum_{i=1}^3 x_i^2 \right)x_1 \right\} + x_2 \left\{ x_1^2 x_2 + x_1^2 x_3 \right\} \\
	&\quad + x_3 \left[x_1^2 x_3 + \delta^{-1}\left\{cx_1^2 x_3 - x_2(x_2-a x_1)(x_1-x_2) + wx_1^3\right\}\right]\\
	&= x_1^2 \left\{ -1 + 2x_1^2 + x_1x_2 + 2x_2^2 + \delta^{-1}w x_1 x_3+ (2+\delta^{-1}c)x_3^2 \right\} -\delta^{-1}x_2x_3(x_2-a x_1)(x_1-x_2).
\end{align*}
The direct calculation of the Jacobian matrix of (\ref{desing-demo2}) is quite lengthy.
Assuming that the Jacobian matrix of $\tilde f$ with respect to $x$ is calculated, the Jacobian matrix of $g$ with respect to $x$ is calculated as follows:
\begin{equation*}
\frac{\partial g_i}{\partial x_j} = \frac{\partial \tilde f_i}{\partial x_j} - \delta_{ij} \left( \sum_{k=1}^3 x_k \tilde f_k(x)\right) - x_i  \sum_{k=1}^3 \left\{ \delta_{jk} \tilde f_k(x) + x_k \frac{\partial \tilde f_k}{\partial x_j} \right\},
\end{equation*}
where $\delta_{ij}$ is the Kronecker's delta.
In the present case, the Jacobian matrix of $\tilde f$ is
\begin{align*}
J\tilde f &= 
\begin{pmatrix}
3x_1^2 - \left( 1- \sum_{i=1}^3 x_i^2 \right) +2x_1^2 & 2x_1x_2 & 2x_1x_3 \\
2x_1(x_2+x_3) & x_1^2 & x_1^2 \\
\tilde f_{31} & \tilde f_{32} & \tilde f_{33}
\end{pmatrix},\\
\tilde f_{31} &= 2x_1x_3 + \delta^{-1} \left\{ 2cx_1x_3 + ax_2(x_1 - x_2) - x_2(x_2-ax_1) + 3wx_1^2\right\},\\
\tilde f_{32} &=  \delta^{-1} \left\{ -(x_2-ax_1)(x_1-x_2) - x_2(x_1-x_2) + x_2(x_2-ax_1) \right\},\\
\tilde f_{33} &= (1+\delta^{-1}c)x_1^2.
\end{align*}

We observe that there are (at least) three equilibria on the horizon $\{p(x)^2 \equiv \sum_{i=1}^3 x_i^2 = 1\}$, one of which, denoted by $p_0$, has a one-dimensional stable manifold and two of which, denoted by $p_1$ and $p_2$, have two-dimensional stable manifolds.
In the present study we fix the following parameters:
\begin{equation*}
(a, c, \delta, w) = (0.3, 0.7, 9.0, 0.02).
\end{equation*}
We have computed the concrete position and associated eigenvalues, which are approximately given as follows:
\begin{align*}
p_0 & \approx (0.9333789, 0.3588924, 0),\\
\lambda_1(p_0) &\approx -1.74239248,\quad \lambda_2(p_0) \approx 0.033880 + 0.1430256 i,\quad \lambda_3(p_0)  = \overline{\lambda_2(p_0)},\\
p_1 &\approx (0.7180928, 0.6959473, 0),\\
\lambda_1(p_1) &\approx -0.11437086,\quad \lambda_2(p_1) = 0.1544775,\quad \lambda_3(p_1)  \approx -1.0313145,\\
p_2 &\approx (0.9985628, -0.0535924, 0),\\
\lambda_1(p_2) &\approx -1.994255,\quad \lambda_2(p_2) \approx -0.1870901,\quad \lambda_3(p_2)  \approx 0.26464449.
\end{align*}

On the other hand, (\ref{central-Nagumo}) possesses a source in a bounded region, namely $\{\sum_{i=1}^3 x_i^2 < 1\}$, which is
\begin{equation*}
p_b \approx (0.7071051816183367, 0.001504037399468, -0.001504037399468).
\end{equation*}

The parameterization method applied to three equilibria on the horizon; $p_0$, $p_1$ and $p_2$,  for (\ref{desing-demo2}) provides local stable manifolds with rigorous error enclosures. 
Distributions of these local stable manifolds are drawn in Figure \ref{fig-Nagumo-global}.

\begin{figure}[htbp]\em
	\centering
	\includegraphics[width=9cm]{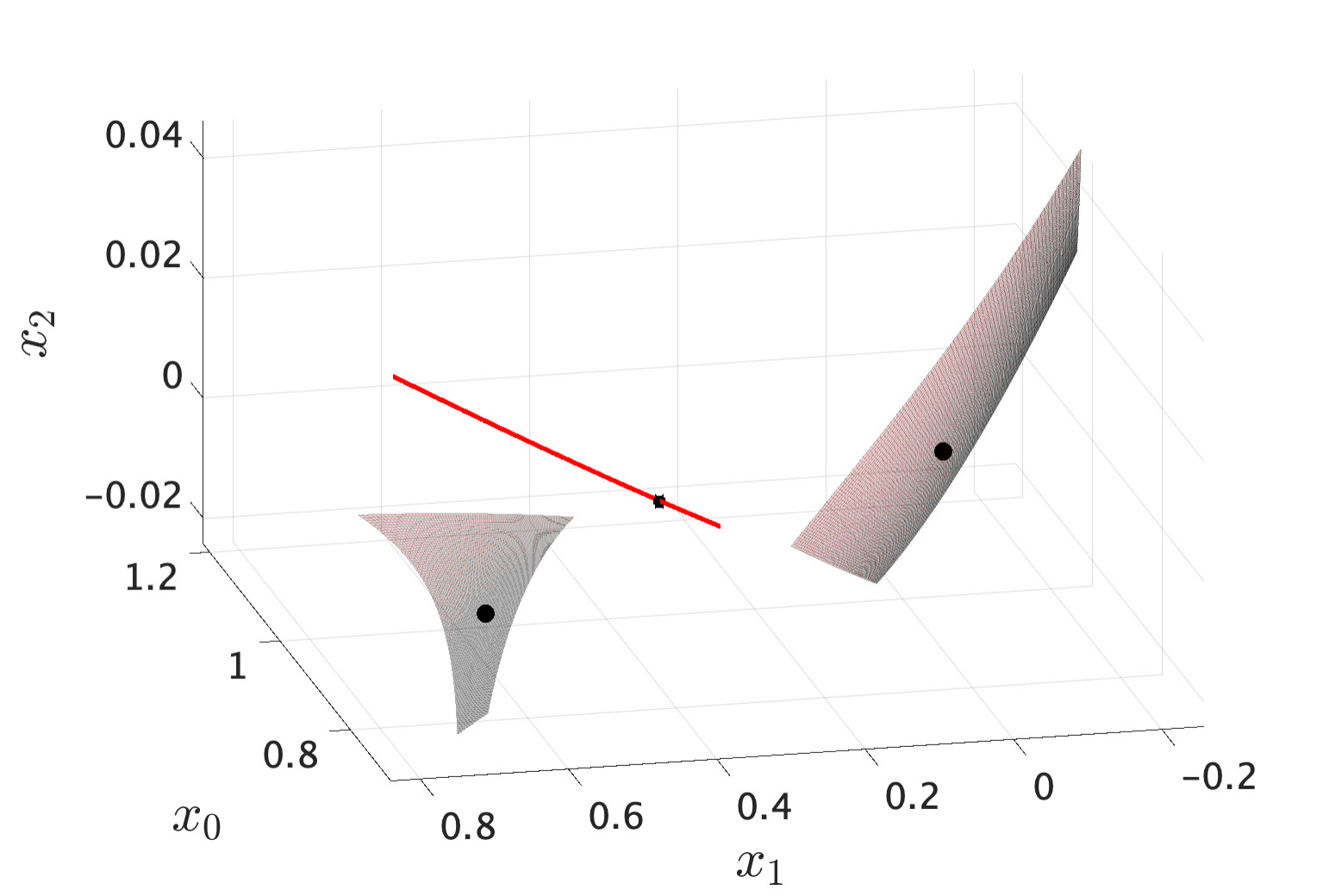}
	\caption{The rigorously computed local stable manifolds for hyperbolic equilibria for (\ref{desing-demo2}).} 
	\flushleft
	The $C^0$ rigorous error bound for the manifold around $p_1$ (left) is $\|P-P^{(N)}\|_\infty \leq r=8.2 \times 10^{-9}$ with $N = 50$, while it is $\|P-P^{(N)}\|_\infty \leq r =9.8 \times 10^{-10}$ with $N = 60$ for the manifold around $p_2$ (right) and $\|P-P^{(N)}\|_\infty \leq r =9.8 \times 10^{-13}$ with $N = 160$ for the manifold around $p_0$ (center). The black dots are equilibria on the horizon; denoting $p_1$, $p_0$ and $p_2$ from the left to the right.
\label{fig-Nagumo-stable}
\end{figure}

\subsection{Blow-up time computation}
\label{sec:blow_up_time_2}
Since the compactification is homogeneous (namely $\alpha = (1,\KMa{\ldots}, 1)$ for defining compactifications) and $k=2$ in the present example, the maximal existence time $t_{\max}$ is
\begin{equation}
\label{tmax-demo2}
t_{\max} = \int_0^\infty \kappa(x(\tau))^{-k} d\tau = \int_0^\infty \left(1-\|x\|^2\right) d\tau,
\end{equation}
according to (\ref{time-desing-Poin-integral}).
Let $P$ be a parameterization around $x_\ast \in \mathcal{E}$ whose image of $B^{n_s}$ determines the local stable manifold $W^s_{loc}(x_\ast)$ of $x_\ast$ such that $P(0) = x_\ast$.
$P$ is assumed to have a polynomial expression (cf. (\ref{param_expression}))
\begin{equation*}
P(\theta) = \sum_{|\alpha| \geq 0}a_{\alpha}\theta^{\alpha},\quad \theta = \begin{pmatrix}
\theta_1 \\ \vdots \\ \theta_{n_s}
\end{pmatrix}\in \mathbb{R}^{n_s},\quad 
a_{\alpha} = \begin{pmatrix}
(a_1)_\alpha \\ \vdots \\ (a_n)_\alpha
\end{pmatrix}\in \mathbb{R}^n
\end{equation*}
satisfying $a_{\bf 0} = x_\ast$.
$\alpha = (\alpha_1, \KMa{\ldots}, \alpha_{n_s}) \in \mathbb{Z}_{\geq 0}^{n_s}$ denotes the multi-index and $\theta^\alpha = \theta_1^{\alpha_1}\cdots \theta_{n_s}^{\alpha_{n_s}}$.
Assuming that the solution trajectory $x(\tau)$ is on $W^s_{loc}(x_\ast)$, the parameterization argument indicates that
\begin{equation*}
x(\tau) = P(Q^{-1} e^{\Lambda \tau}Q\theta_0),\quad \Lambda = {\rm diag}(\lambda_1, \KMa{\ldots}, \lambda_{n_s})\quad\text{ with }\quad {\rm Re}\,\lambda_i < 0.
\end{equation*}
For a while, we further assume that $Q=I$, $\lambda_i \in \mathbb{R}$ for $i=1,\KMa{\ldots}, n_s$ and $k=2$.
Then 
\begin{equation*}
P(\theta) = \sum_{|\alpha| \geq 0}a_{\alpha}\theta(\tau)^{\alpha},\quad \theta(\tau) = \begin{pmatrix}
e^{\lambda_1 \tau}(\theta_{1})_0 \\ \vdots \\ e^{\lambda_{n_s} \tau} (\theta_{n_s})_0
\end{pmatrix},\quad 
\theta_0 = \begin{pmatrix}
(\theta_{1})_0 \\ \vdots \\ (\theta_{n_s})_0
\end{pmatrix},\quad a_\alpha \equiv ((a_1)_\alpha, \KMa{\ldots}, (a_n)_\alpha )\in \mathbb{R}^n
\end{equation*}
and
\begin{align*}
t_{\max} &= \int_0^\infty \left\{ 1- \sum_{i=1}^n \left(\sum_{|\alpha| \geq 0} (a_i)_{\alpha} \theta(\tau)^{\alpha}\right)^2 \right\} d\tau \\
&= \int_0^\infty \left\{ 1- \sum_{i=1}^n \left(\sum_{|\beta| \geq 0}\sum_{|\gamma| \geq 0} (a_i)_\beta  (a_i)_\gamma e^{(\sum_{j=1}^m (\beta_j + \gamma_j)\lambda_j) \tau}\theta_0^{\beta + \gamma} \right) \right\} d\tau \\
&= \int_0^\infty  \left\{ 1 - \sum_{i=1}^n \sum_{|\alpha| \geq 0} ( a_i \ast a_i )_\alpha e^{(\alpha \cdot \lambda)\tau} \theta_0^\alpha \right\} d\tau,
\end{align*}
where $(a\ast b)_{\alpha}$ denotes the discrete convolution over the multi-index $\alpha\in \mathbb{Z}_{\geq 0}^{n_s}$ given in (\ref{conv-multi}) and $\theta_0^\alpha = ((\theta_{1})_0)^{\alpha_1}\cdots ((\theta_{n_s})_0)^{\alpha_{n_s}}$.
Here we use the fact 
\begin{equation*}
\sum_{i=1}^n \sum_{|\alpha| = 0} ( a_i \ast a_i )_\alpha e^{(\alpha \cdot \lambda)\tau} \theta^\alpha = \sum_{i=1}^n \left( (a_i)_{\bf 0} \right)^2 = \|x_\ast\|^2 = 1
\end{equation*}
\KMa{because} $P(0) = x_\ast$ and $x_\ast \in \mathcal{E} = \{\|x\|=1\}$.
Thus we have 
\begin{align*}
\int_0^\infty  \left\{ 1 - \sum_{i=1}^n \sum_{|\alpha| \geq 0} ( a_i \ast a_i )_\alpha e^{(\alpha \cdot \lambda)\tau} \theta_0^\alpha \right\} d\tau 
	&= - \int_0^\infty \sum_{|\alpha| > 0} \sum_{i=1}^n ( a_i \ast a_i )_\alpha e^{(\alpha \cdot \lambda)\tau} \theta_0^\alpha  d\tau\\
	&= - \sum_{|\alpha| > 0} \left(\sum_{i=1}^n( a_i \ast a_i )_\alpha \right) \frac{\theta_0^\alpha}{\alpha \cdot \lambda},
\end{align*}
where the denominator is strictly negative for all possible $\alpha$ and the analyticity of $P$ ensures the convergence of the above series. 
Finally, we have the following expression of $t_{\max}$:
\begin{align}
\label{tmax_Poincare}
t_{\max} &= - \sum_{|\alpha| > 0} \left(\sum_{i=1}^n( a_i \ast a_i )_\alpha \right) \frac{\theta_0^\alpha}{\alpha \cdot \lambda}.
\end{align}
Remark that the above expression makes sense only if 
\begin{equation*}
\|P(\theta_0)\|^2 =  \sum_{i=1}^n \left(\sum_{|\alpha| \geq 0} (a_i)_{\alpha} \theta(\tau)^{\alpha}\right)^2 = 1 +  \sum_{|\alpha| > 0} \left(\sum_{i=1}^n ( a_i \ast a_i )_\alpha \right) \theta(\tau)^{\alpha} < 1
\end{equation*}
by definition of the Poincar\'{e} compactification.
With an explicit expression or enclosure of $P(\theta)$, the quantity (\ref{tmax_Poincare}) or its enclosure is rigorously calculated for each $\theta_0 \in B^{n_s}$.
The above procedure is applied with  $n=3$ and $n_s = 1$ or $2$ in the present problem.
\par
If $n_s = 1$, the expression (\ref{tmax_Poincare}) can be simplified by considering the single index $l\geq 1$ instead of the multi-index $\alpha$ to obtain
\begin{align*}
t_{\max} &= - \frac{1}{\lambda}\sum_{l \geq 1}  \left\{ \sum_{i=1}^n  ( a_i \ast a_i )_l  \right\}\frac{\theta_0^{ l }}{l }.
\end{align*}


In practice the computation of the Taylor coefficients $a_1 ,\dots,a_n$ comes from a successful application of the Newton-Kantorovich type theorem (Theorem~\ref{thm:radii_polynomials}) applied to $F:X\to X'$ given in \eqref{eq:parameterization_F(a)=0}. More precisely, denote by $\ba_1,\dots,\ba_n$ the numerical approximations (of order $N$) and $r_0>0$ such that the true coefficients satisfy 
\[
\|a - \ba \|_X = \max_{j=1,\dots,n} \| a_j -\ba_j \|_1 \le r_0.
\]
Denote $b = a - \ba$ and note that 
\begin{align*}
t_{\max} &= - \sum_{|\alpha| > 0} \left(\sum_{i=1}^n(a_i \ast a_i )_\alpha \right) \frac{\theta_0^\alpha}{\alpha \cdot \lambda}
\\
&=  - \sum_{|\alpha| = 0}^{2N} \left(\sum_{i=1}^n(\ba_i \ast \ba_i )_\alpha \right) \frac{\theta_0^\alpha}{\alpha \cdot \lambda}
- 2 \sum_{|\alpha| > 0} \left(\sum_{i=1}^n(\ba_i \ast b_i )_\alpha \right) \frac{\theta_0^\alpha}{\alpha \cdot \lambda} \\
& \quad
- \sum_{|\alpha| > 0} \left(\sum_{i=1}^n(b_i \ast b_i )_\alpha \right) \frac{\theta_0^\alpha}{\alpha \cdot \lambda}.
\end{align*} 
Denote, the spectral gap of the stable eigenvalues by
\[
\sigma_{\rm gap} \bydef \min_{j=1,\dots,n_s} |\lambda_j| > 0
\]
and note that $\sigma_{\rm gap} = \min_{|\alpha|>0} |\alpha \cdot \lambda|$. Hence, for all $\theta_0 \in B_1^{n_s}$, 
\begin{align*}
\left| 
2 \sum_{|\alpha| > 0} \left(\sum_{i=1}^n(\ba_i \ast b_i )_\alpha \right) \frac{\theta_0^\alpha}{\alpha \cdot \lambda} \right| 
& = 
\left| 
2 \sum_{i=1}^n \left( \sum_{|\alpha| > 0}(\ba_i \ast b_i )_\alpha  \frac{\theta_0^\alpha}{\alpha \cdot \lambda} \right) \right| 
\\
& \le \frac{2}{\sigma_{\rm gap}} \sum_{i=1}^n  \sum_{|\alpha| > 0} |(\ba_i \ast b_i )_\alpha | 
\\
& = \frac{2}{\sigma_{\rm gap}} \sum_{i=1}^n \| \ba_i \ast b_i \|_1
\\
& \le \left( \frac{2}{\sigma_{\rm gap}} \sum_{i=1}^n \| \ba_i \|_1\right) r_0.
\end{align*}
Similarly, we can show that
\[
\left| - \sum_{|\alpha| > 0} \left(\sum_{i=1}^n(b_i \ast b_i)_\alpha \right) \frac{\theta_0^\alpha}{\alpha \cdot \lambda} \right|
\le \frac{n r_0^2}{\sigma_{\rm gap}}.
\]
Denoting 
\[
\tilde \delta \bydef \left( \frac{2}{\sigma_{\rm gap}} \sum_{i=1}^n \| \ba_i \|_1\right) r_0 + \frac{n r_0^2}{\sigma_{\rm gap}},
\]
then a rigorous enclosure of $t_{\max}$ is given by the computable formula
\[
t_{\max} \in - \sum_{|\alpha| = 0}^{2N} \left(\sum_{i=1}^n(\ba_i \ast \ba_i )_\alpha \right) \frac{\theta_0^\alpha}{\alpha \cdot \lambda}
+ [-\tilde \delta,\tilde \delta].
\]

\subsection{Distribution of $t_{\max}$ near blow-up}
\label{sec:distribution_tmax}
In the present example, saddle equilibria $p_1$ and $p_2$ on the horizon both have $2$-dimensional stable manifolds.
Once the parameterization method is applied to validating these invariant manifolds, the blow-up time $t_{\max}$ defined by (\ref{tmax-demo2}) is obtained as a function of the parameter $\theta$ determining local stable manifolds.
In particular, we can validate distributions of $t_{\max}$ on local stable manifolds.
\par
Figure \ref{fig:blow-up-time_demo2} draws the distributions of $t_{\max}$.
\KMa{Because} the vector field (\ref{desing-demo2}) itself can be defined outside $\overline{\mathcal{D}}$, namely in $\{\|x\| > 1\}$ also, $t_{\max}$ can attain negative values.
Nevertheless, from the viewpoint that (\ref{desing-demo2}) is obtained from (\ref{Nagumo-at-infty}) through the compactification, only the positive values make sense as the blow-up time of solutions to (\ref{Nagumo-at-infty}).
Now we pay attention to the following facts, which follow from fundamental arguments of compactifications (cf. \cite{Mat2018}):
\vspace{-.1cm}
\begin{itemize}
\item The horizon $\mathcal{E}$ is a codimension one invariant submanifold of $\mathbb{R}^3$.
\vspace{-.2cm}
\item The integrand determining $t_{\max}$ (e.g. (\ref{tmax-demo2})) is identically zero on $\mathcal{E}$.
\vspace{-.1cm}
\end{itemize}

\begin{figure}[h!]\em
\begin{minipage}{0.5\hsize}
\centering
\includegraphics[width=7cm]{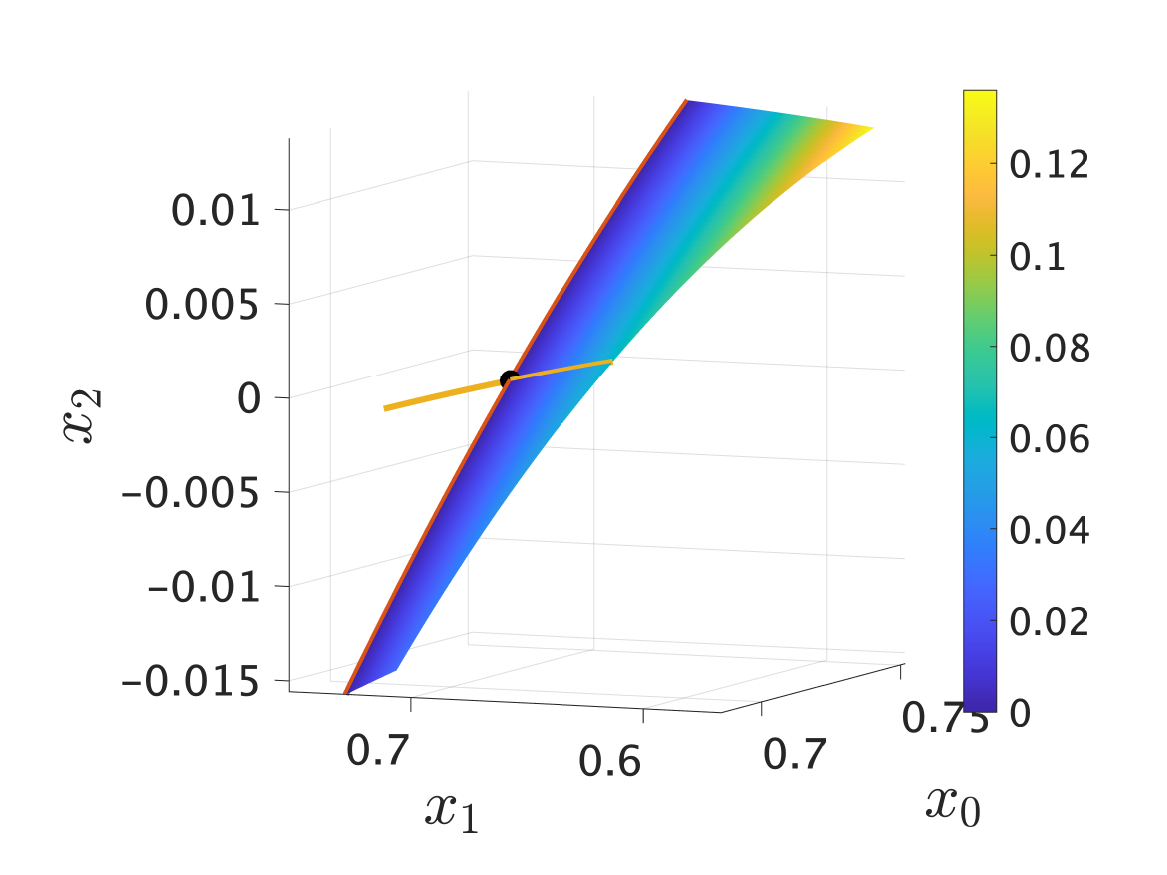}
(a)
\end{minipage}
\begin{minipage}{0.5\hsize}
\centering
\includegraphics[width=7cm]{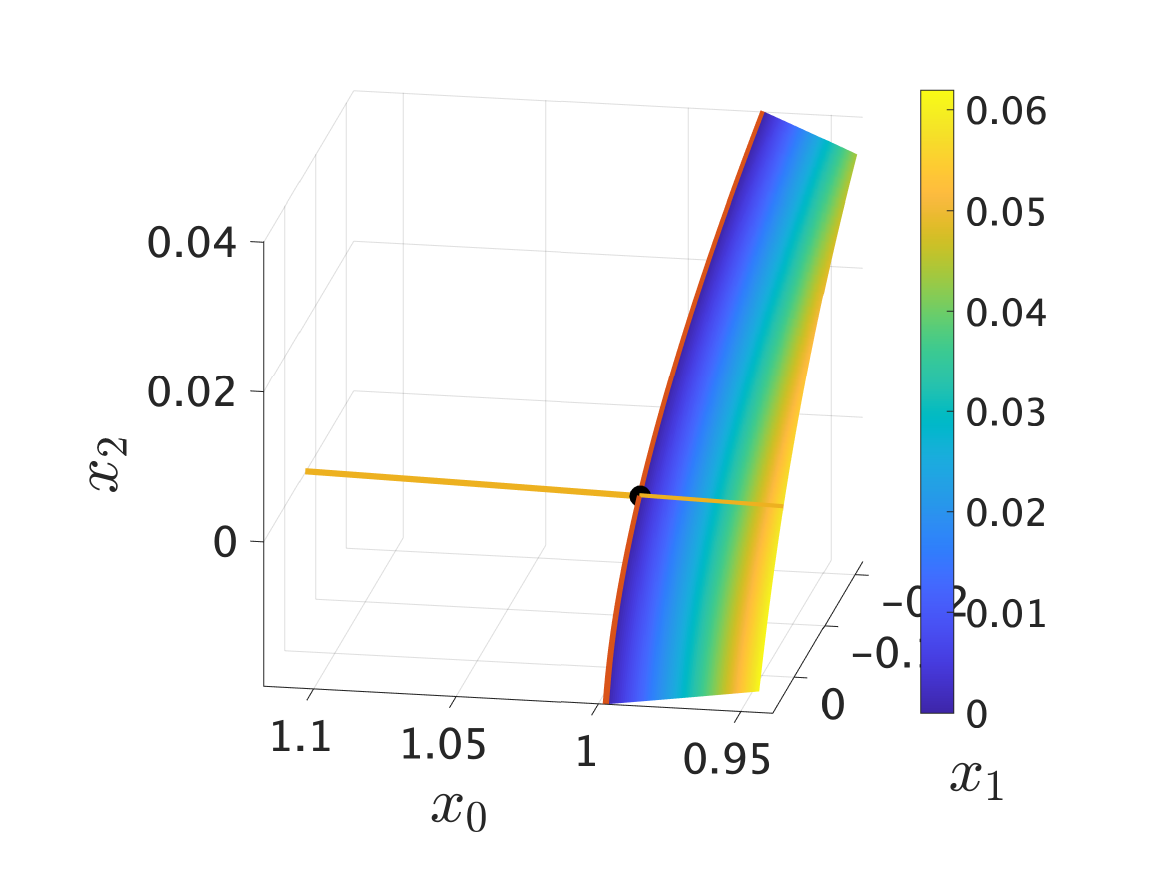}
(b)
\end{minipage}
\caption{Distribution of $t_{\max}$ in (\ref{tmax-demo2}).}
\flushleft
(a) Distribution of $t_{\max}$ around $p_1$.
(b) Distribution of $t_{\max}$ around $p_2$.
Surfaces are validated local stable manifolds of equilibria (black dots).
Only positive values of $t_{\max}$ make sense as blow-up times of blow-up solutions for (\ref{Nagumo-at-infty}), and hence these surfaces are drawn only in the regions where $t_{\max} \geq 0$.
In both figures, yellow curves and red curves denote $P(\{\theta_2 = 0\})$ and $P(\{\theta_1 = 0\})$, respectively.
\KMa{The graphs of $P(\{\theta_2 = 0\})$ are drawn outside the horizon (red curves) because these curves correspond to coordinate axes of local stable manifolds.}
The red curves are located in the horizon $\mathcal{E}$\KMa{, reflecting the invariant structure of $\mathcal{E}$}. 
According to eigendirections at equilibria, asymptotic behavior of trajectories on these manifolds are essentially governed by dynamics on $P(\{\theta_1 = 0\})$. 
On the other hand, dynamics in \KMa{this direction makes} little contributions to $t_{\max}$.
\label{fig:blow-up-time_demo2}
\end{figure}

Results in Figure \ref{fig:blow-up-time_demo2} indeed reflect the above nature.
For example, one-dimensional submanifold of two-dimensional stable manifolds of $p_1$ and $p_2$ are located on the horizon where $t_{\max}$ is identically zero.
Our computations further indicate that the region $\{t_{\max} > 0\}$ is included in $\{\|x\| < 1\}$.
Looking at the region $\{t_{\max} > 0\}$, like the previous example in Figure \ref{fig:t_max}, we can discuss the distribution of blow-up times.
\par
From our present observations, we have an interesting result about the distribution of blow-up times.
In the present example, eigenvalues determining stable submanifolds on the horizon have smaller moduli than the transverse direction.
In other words, {\em the leading (stable) eigendirections are directed tangent to the horizon (red curves in Figure \ref{fig:blow-up-time_demo2}) in both manifolds}.
Asymptotic behavior of trajectories around equilibria is therefore essentially determined by the exponential decay behavior in the direction parallel to the horizon.
On the other hand, level sets of $t_{\max}$ are distributed so that they are foliated parallel to the horizon, equivalently the level set $t_{\max} = 0$, in both cases.
These observations may look strange from the viewpoint of the asymptotic behavior around (hyperbolic) equilibria.
Indeed, dynamics around hyperbolic equilibria of interest are essentially governed by leading eigendirection, implying that the behavior along the leading eigendirection should mainly contribute to estimate $t_{\max}$.
However, the integrand in (\ref{tmax-demo2}) is almost zero near the horizon. 
More precisely, according to the proof of the blow-up criterion theorem (Theorem \ref{thm:blowup-Poincare} whose proof is found in \cite{Mat2018}), the integrand as a function of $\tau$ decays exponentially fast near the horizon\footnote{
Hyperbolicity of equilibria is used for the proof, implying that the dynamical property of equilibria, and potentially general invariant sets, plays a key role in determining the distribution of $t_{\max}$ around $0$.
}.
Therefore asymptotic behavior of solution trajectories near the horizon does little contributions to $t_{\max}$.
As a consequence, {\em blow-up time is essentially foliated parallel to the horizon, no matter where the leading eigendirection is distributed}.
This is a reason why the level set of $t_{\max}$ is distributed parallel to the horizon.


\subsection{Extension of blow-up solutions}
\label{sec:demo2_extension}
As demonstrated in Section \ref{sec:demo1}, we can extend local stable manifolds globally by rigorous integration of (\ref{desing-demo2}) in backward time direction.
In the present case, we have a (bounded) source equilibrium $p_b$ and we have succeeded in validating connecting orbits between three equilibria on the horizon and $p_b$.
The validated {\em global} stable manifolds are drawn in Figure \ref{fig-Nagumo-global}. 
These stable manifolds separate the asymptotic behavior of solution trajectories outside the manifolds, although we omit the detailed description of phase portraits \KMa{because} it is hard to clearly visualize.
\par
\KMa{Note that the present validation of connecting orbits is done by the method typically used in the similar works (e.g., \cite{MT2020_1}). 
In particular, solutions approaching to trapping regions of equilibria are validated for the existence of global-in-time existence of solutions.
In the present work, trapping regions of sink equilibria are validated by means of local Lyapunov functions (cf. \cite{MT2020_1}), while the parameterization for {\em sink} equilibria can be also applied to constructing trapping regions.}

\begin{figure}[h!]\em
\centering
\includegraphics[width=10cm]{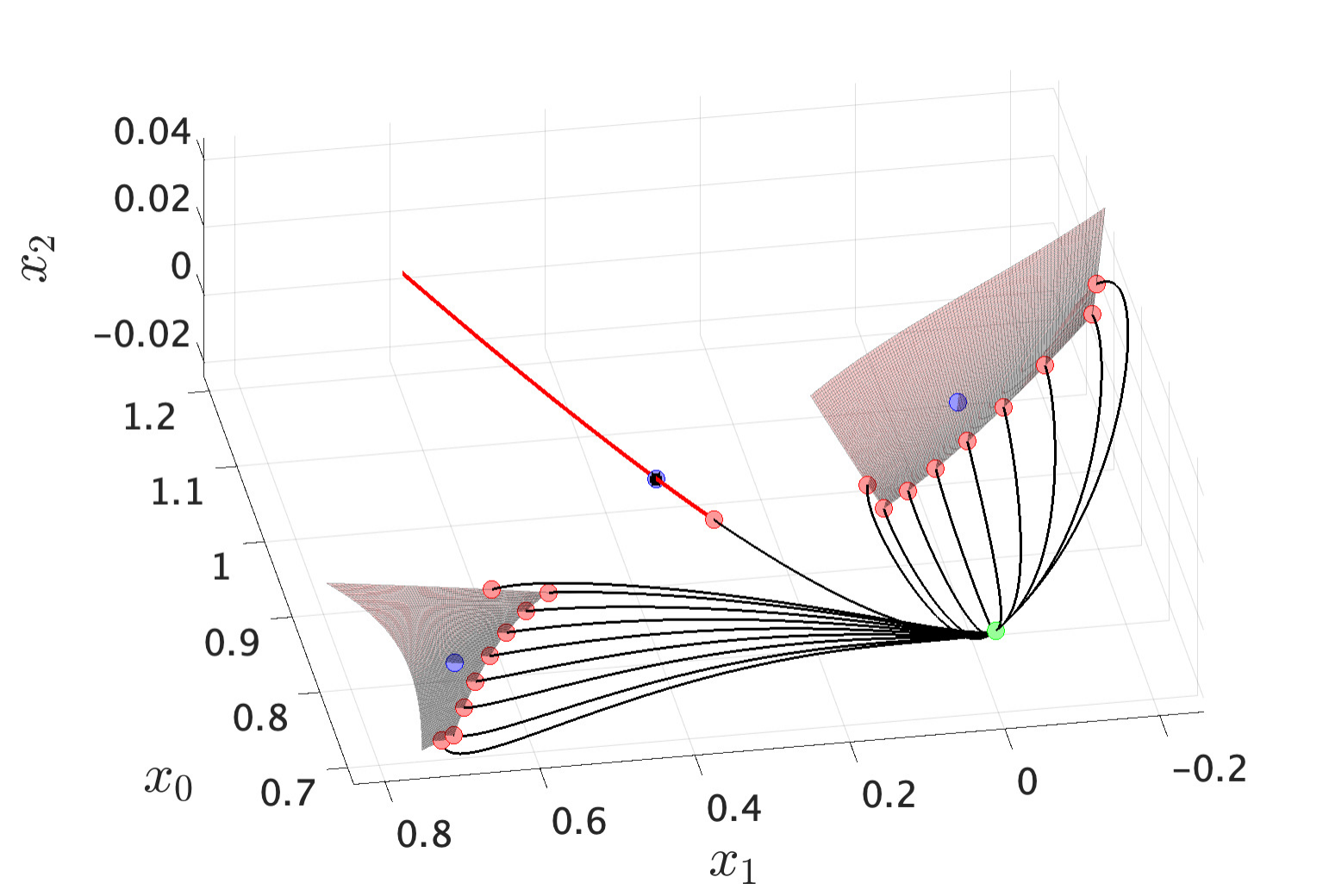}
\caption{The rigorously computed trajectories on global stable manifolds \KMa{of} hyperbolic equilibria for (\ref{desing-demo2}).} 
\flushleft
Local stable manifolds for (\ref{desing-demo2}) colored by pink and red are validated by the parameterization method, Figure \ref{fig-Nagumo-stable}.
The \KMa{green} dot denotes the (bounded) source equilibrium $p_b$.
\label{fig-Nagumo-global}
\end{figure}

\section{Example 3: presence of separatrix involving blow-ups}

\label{sec:demo3}
The final example is 
\begin{equation}
\label{KK}
\begin{cases}
u' = u^2 - v, & \\
v' = \frac{1}{3}u^3 - u. &
\end{cases}
\end{equation}
The present vector field originally comes from the Keyfitz-Kranser model \cite{KK1990} demonstrating a non-trivial example of system of conservation laws including {\em singular shock waves}.
See \cite{KK1990} or references therein for details.
A brief introduction of the model is also shown in \cite{MT2020_2}.
Our purpose here is to validate blow-up solutions for (\ref{KK}) as well as bounded heteroclinic connections among bounded equilibria towards the global phase portrait.
The present study unravels a significant characteristic of saddle-type blow-up solutions, which shall be called a {\em blow-up separatrix}.
\par
Firstly, a direct calculation yields the following.
\begin{lem}
The vector field (\ref{KK}) is asymptotically quasi-homogeneous of type $(1,2)$ and order $2$.
\end{lem}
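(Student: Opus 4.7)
The plan is to unpack the definition of asymptotic quasi-homogeneity at infinity and check it directly by substitution. With $(\alpha_1,\alpha_2)=(1,2)$ and target order $k+1=2$ (so $k=1$), I first identify a candidate principal (quasi-homogeneous) vector field $f_{\alpha,k}=(f_{\alpha,k,1},f_{\alpha,k,2})$. The definition requires $f_{\alpha,k,1}$ to be quasi-homogeneous of order $k+\alpha_1=2$ and $f_{\alpha,k,2}$ of order $k+\alpha_2=3$. Reading off the monomials of~(\ref{KK}), the natural choice is
\begin{equation*}
f_{\alpha,k,1}(u,v)=u^2-v,\qquad f_{\alpha,k,2}(u,v)=\tfrac{1}{3}u^3.
\end{equation*}

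The second step is to verify that $f_{\alpha,k}$ is indeed quasi-homogeneous of the claimed type and order by a direct substitution $u\mapsto s^{\alpha_1}u=su$, $v\mapsto s^{\alpha_2}v=s^2 v$: one obtains $f_{\alpha,k,1}(su,s^2v)=s^2u^2-s^2v=s^2(u^2-v)$ and $f_{\alpha,k,2}(su,s^2v)=\tfrac{1}{3}s^3u^3$, confirming the respective homogeneity orders $2$ and $3$.

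The third step is to check the asymptotic conditions. The first component is already identical to its principal part, so the corresponding limit is trivially zero. For the second component,
\begin{equation*}
s^{-(k+\alpha_2)}\bigl\{f_2(su,s^2v)-s^{k+\alpha_2}f_{\alpha,k,2}(u,v)\bigr\}
=s^{-3}\bigl\{\tfrac{1}{3}s^3u^3-su-\tfrac{1}{3}s^3u^3\bigr\}=-s^{-2}u,
\end{equation*}
which tends to $0$ as $s\to+\infty$, uniformly for $(u,v)\in S^1$ since $|u|\le 1$ there. This establishes the asymptotic quasi-homogeneity with type $(1,2)$ and order $2$.

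There is no real obstacle here: the statement is a direct computation, and the only mild care needed is to pick the principal part correctly (keeping the top-order monomial in each component with respect to the weighted scaling) and to confirm that the discarded lower-order terms—namely the $-u$ in $f_2$—vanish after rescaling by the appropriate negative power of $s$, uniformly on the unit sphere.
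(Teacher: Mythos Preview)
Your proof is correct and is precisely the ``direct calculation'' the paper alludes to without spelling out: identify the principal part $f_{\alpha,k}=(u^2-v,\tfrac13 u^3)$, check its quasi-homogeneity under $(u,v)\mapsto(su,s^2v)$, and verify that the only lower-order remainder $-u$ in the second component vanishes after scaling by $s^{-3}$, uniformly on $S^1$. Nothing is missing.
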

Note that (\ref{KK}) is {\em not quasi-homogeneous}.
On the other hand, the system (\ref{KK}) possesses the symmetry
\begin{equation}
\label{KK-symmetry}
(t, u, v) \mapsto (-t, -u, v).
\end{equation}
Namely, if $(u(t), v(t))$ is a solution to (\ref{KK}), then so is $(-u(-t), v(-t))$.
This property is used to understand the global phase portrait of (\ref{KK}) including infinity.
\par
To study the dynamics at infinity, we introduce the quasi-parabolic compactification of type $(1,2)$ given by
\begin{equation*}
u = \frac{x_1}{1-p(x)^4},\quad v = \frac{x_2}{(1-p(x)^4)^2},\quad p(x)^4 = x_1^4 + x_2^2.
\end{equation*}
Then the corresponding desingularized vector field $g$ is given by the following: 
\begin{equation}
\label{KK-desing}
\begin{cases}
\dot x_1 = g_1(x) \bydef (x_1^2 - x_2) H_1(x) - x_1H_2(x) & \\
\dot x_2 = g_2(x) \bydef\left\{ \frac{1}{3}x_1^3  - (1 - p(x)^4)^2 x_1 \right\} H_1(x) - 2x_2 H_2(x), &
\end{cases}
\end{equation}
where $\dot {} = \frac{d}{d\tau}$ and
\begin{equation*}
H_1(x) = \frac{1}{4}\left\{ 1 + 3p(x)^4\right\},\quad H_2(x) = x_1^3 (x_1^2 - x_2) + \frac{x_2}{2}\left\{ \frac{1}{3}x_1^3 - (1 - p(x)^4)^2 x_1\right\}.
\end{equation*}
Fortunately, we know that {\em all equilibria (including the origin) are hyperbolic} and hence we do not need additional desingularization.
Detailed information of our targeting equilibria are the following:
\begin{itemize}
\item The origin $p_0 = (x_1, x_2) = (0,0)$, which is {\bf saddle}.
\item A bounded equilibrium $p_b^+ = (x_1, x_2) \approx (0.7328506362011802, 0.5370700549804747)$, which is {\bf source}.
\item A bounded equilibrium $p_b^- = (x_1, x_2) \approx (-0.7328506362011802, 0.5370700549804747)$, which is {\bf sink}.
\item Equilibrium on the horizon $p_{\infty, s}^{\pm} = (x_1, x_2) \approx (\pm 0.8861081289780320, 0.6192579489210105)$, which are {\bf saddle}.
\item Equilibria on the horizon $p_{\infty}^{\pm} = (x_1, x_2) \approx (\pm 0.989136995894977, 0.206758557005180)$.
The point $p_{\infty}^+$ is {\bf sink}, while $p_{\infty}^-$ is {\bf source}.
\end{itemize}
\KMa{Sample (non-rigorous) numerical computations} indicate that there is a chain of global trajectories connecting $p_0$ and $\KMa{p_b^+}$, and $\KMa{p_b^+}$ and $\KMa{p_{\infty, s}^+}$, respectively. 
The numerically computed global phase portrait including \KMa{the horizon} is shown in Figure \ref{fig-KK-global}.
The figure indicates that the whole phase space is separated into two subdomains by a heteroclinic chain among equilibria, including those on the horizon.

\begin{figure}[htbp]\em
\centering
\includegraphics[width=7.0cm]{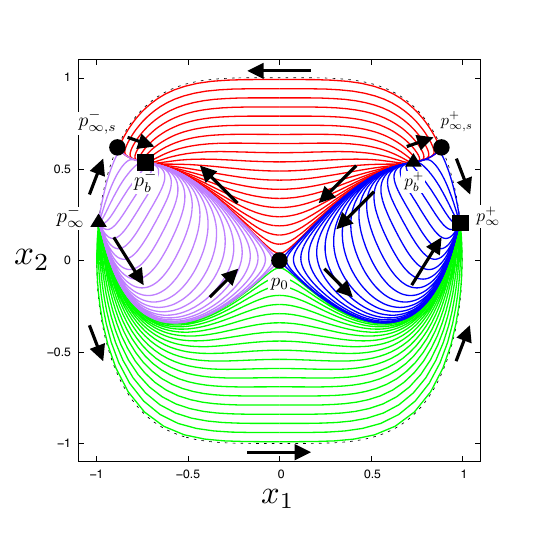}
\caption{A global phase portrait of (\ref{KK-desing}) through rough numerical simulations. 
Black squares, circles and triangles denote sink, saddle and source equilibria, respectively.
Note that all objects here are obtained by (non-rigorous) numerical integration of (\ref{KK-desing}).
The flow \KMa{directions are shown by black arrows}.
The boundary of the collection of curves \KMa{(dotted curve)} is the horizon $\mathcal{E}$.
\KMa{The whole region $\overline{\mathcal{D}}$ is separated into four regions; points admitting global-in-time trajectories (red), points admitting blow-up only in positive time direction (blue), points admitting blow-up only in negative time direction (purple), and points admitting blow-up both in positive and negative time directions (green).}
Chain of connecting orbits, some of which correspond to saddle-type blow-up solutions.}
\label{fig-KK-global}
\end{figure}

\begin{rem}
Here we have chosen the parabolic-type compactification in the present argument for the following reasons.
First, our objective here is the {\em global} phase portrait for (\ref{KK}), which is insufficient to study only one local chart, namely directional compactifications.
The change of coordinates by numerics (both in rigorous and non-rigorous sense) requires unnecessary and difficult tasks.
Second, Poincar\'{e}-type compactifications are inappropriate to study (\ref{KK}) including dynamics at infinity, \KMa{because} (\ref{KK}) is \KMa{quasi-homogeneous only in the asymptotic sense}, and the application to Poincar\'{e}-type compactifications to such a system cause the loss of regularity of the desingularized vector field \KMa{on the horizon}, as mentioned in \KMa{Section \ref{sec:choice-compactification}}.
\end{rem}

One of our main goals here is to construct the chain, mainly connecting orbits among $\{p_{\infty, s}^+, p_b^+, p_0\}$.
Like in the previous examples, the local stable manifold $W^s_{\rm loc}(p_{\infty, s}^+)$ of the saddle $p_{\infty, s}^+$ on the horizon can be validated by the parameterization method. 
Validated local stable manifolds of $p_{\infty, s}^+$ as well as $p_0$ are shown in Figure \ref{fig-1d-stable-manifold-ex3}.
These are validated through the parameterization method in the same way as Sections \ref{sec:demo1} and \ref{sec:demo2}.
We omit the detailed implementation of the method applied to the present problem \KMa{because} the basic idea is identical, while we need lengthy calculations of terms we should enclose.
\par
We then extend the manifold inside $\mathcal{D} \equiv \{p(x) < 1\}$ by the rigorous integration of (\ref{KK-desing}).
According to numerical simulations (Figure \ref{fig-KK-global}), \KMa{$W^s_{\rm loc}(p_{\infty, s}^+)$} is connected to the source $p_b^+$. 
\KMa{Rigorous} integration of (\ref{KK-desing}) in backward time direction provide the computer-assisted validation of the connecting orbit from $p_{\infty, s}^+$ to $p_b^+$ \KMa{by constructing a trapping region of $p_b^+$ in backward time, which is a standard techniques for validating global-in-time trajectories and applied in e.g. \cite{MT2020_1}.}
On the other hand, we have another bounded equilibrium; the origin $p_0$.
Eigenvalue validation indicates that $p_0$ is a saddle, and the global trajectory connecting the source $p_b^+$ and the origin $p_0$ is also validated by extending the local stable manifold $W^s_{\rm loc}(p_0)$ of $p_0$ via the parameterization and the rigorous integration of (\ref{KK-desing}) in backward time direction.
By symmetry, we obtain the chain of connecting orbits among the points $\{p_{\infty, s}^\pm, p_b^\pm, p_0\}$.
Note that all these points are validated with rigorous errors through the parameterization method.
Also note that the connecting orbit between $p_{\infty, s}^\pm$ exists through the fact that the horizon $\mathcal{E}$ is invariant and there are no equilibria between them (cf. \cite{Mat2018}).


As a consequence, an invariant closed curve consisting of connecting orbits among equilibria $\{p_{\infty, s}^\pm, p_b^\pm, p_0\}$ is constructed, as indicated in Figure \ref{fig-KK-global}, with computer-assisted proof.
The well-known Jordan's Closed Curve Theorem indicates that the invariant closed curve decomposes the phase space $\overline{\mathcal{D}}$ into two regions\KMa{\footnote{
\KMa{Numerically observed phase portrait in Figure \ref{fig-KK-global} implies the existence of chains of connecting orbits providing a finer decomposition of the phase space. 
But we omit such a precise decomposition because the process is basically identical and the essential consequence is similar.}
}}. 
In the sequel, we study the nature of solutions through points on these separated regions from the viewpoint of blow-up behavior.

\begin{figure}[htbp]\em
\centering
\includegraphics[width=7.0cm]{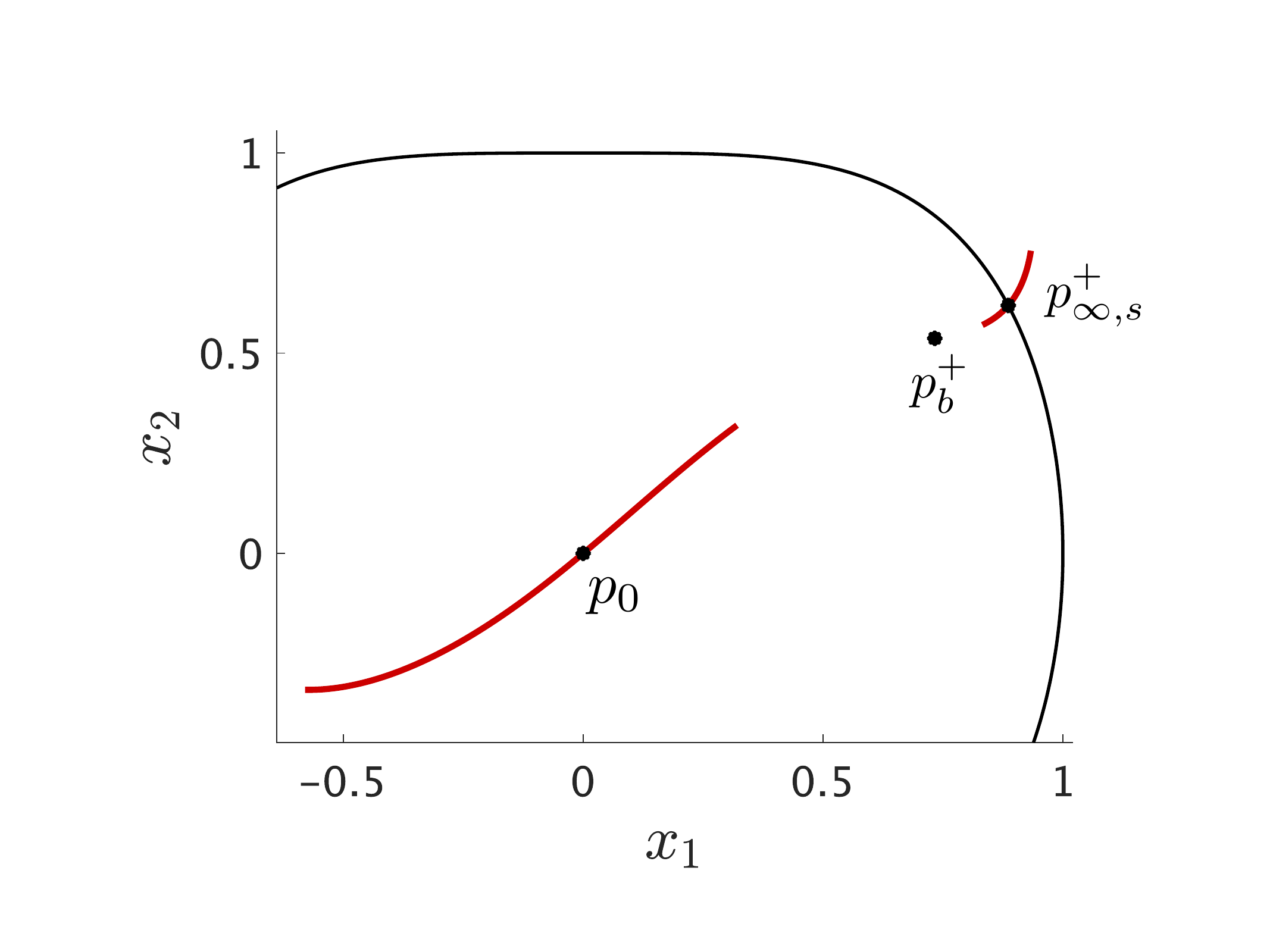}
\caption{Local stable manifolds $W^s_{\rm loc}(p_{\infty, s}^+)$ and $W^s_{\rm loc}(p_0)$ for (\ref{KK-desing}).} 
\flushleft
Black dots are equilibria $p_0, p_b^+$ and $p_{\infty, s}^+$ from the left, respectively.
Red curves are validated local stable manifolds with rigorous error bounds $\|P-P^{(N)}\|_\infty \leq r = 5.171\times 10^{-14}$ for $p_0$ and $\|P-P^{(N)}\|_\infty \leq r = 1.381\times 10^{-10}$ for $p_{\infty, s}^+$, respectively.
\KMa{The black curve denotes the horizon $\mathcal{E}$. 
Although validated local stable manifolds are characterized for (\ref{KK-desing}) which themselves make sense outside the horizon also, they make sense inside the horizon as the corresponding objects to the original vector field (\ref{KK}).}
In both validations, the approximation order $N$ is chosen as $N=100$.
\KMa{Because} $p_b^+$ is source, it does not admit a non-trivial stable manifold.
\label{fig-1d-stable-manifold-ex3}
\end{figure}

\subsection{Blow-up time computation}
\label{sec:blow_up_time_3}
The maximal existence time of the solution $(y_1(t), y_2(t))$ for the original vector field is given as follows (see \KMa{(\ref{time-desing-para-integral})} and \cite{MT2020_1}):
\begin{align*}
t_{\max} &= \int_0^\infty \frac{1}{4}\left\{ 1+ 3\left(x_1(\tau)^4 + x_2(\tau)^2 \right)\right\} (1-x_1(\tau)^4 - x_2(\tau)^2 ) d\tau.
\end{align*}
Let $x_\ast = (x_{\ast,1}, x_{\ast,2})\in \mathcal{E}$ be a {\em saddle} equilibrium.
Note that $x_{\ast,1}^4 + x_{\ast,2}^2 = 1$ by definition of the present parabolic-type compactification.
%

As in the previous case, let $P$ be a parameterization whose image of $B^{n_s}$ determines the local stable manifold of $x_\ast$ such that $P(0) = x_\ast$.
$P$ is assumed to have a power series expression (\ref{param_expression}) satisfying $a_{\bf 0} = x_\ast$.
Assume that the trajectory $\{(x_1(\tau), x_2(\tau))\}$ is included in $W^s_{\rm loc}(x_\ast)$ for the desingularized vector field.
%
In the present case, $n=2$, and we consider only the case $n_s = 1$.
Calculations below are slightly simplified by introducing $u = e^{\lambda \tau}\theta_0$, where $\lambda$ be the stable eigenvalue at $x_\ast$.
Indeed, we have
\begin{equation*}
x(\tau) = P(e^{\lambda \tau}\theta_0) = P(u) = \sum_{j \geq 0}a_j u^j \in \mathbb{R}^2,\quad u \in \mathbb{R},\quad a_j \equiv ((a_1)_j, (a_2)_j)^T \in \mathbb{R}^2.
\end{equation*}
Letting $a_i = \{(a_i)_j\}_{j\geq 0}$ for $i=1,2$, we have
\begin{align*}
t_{\max} &=
 \int_{\theta_0}^0 \frac{1}{4}\left( 1+ 3( P_1^4(u) + P_2^2(u) )\right)
	 \left( 1- P_1^4(u) - P_2^2(u) \right) \frac{du}{\lambda u}\\
	&= -\int_0^{\theta_0} \frac{1}{4} \left( 1+ 3\sum_{j \geq 0} \left(  ( a^4_1  )_{j}  + ( a^2_2 )_j \right) u^{j} \right) 
\left( 1- \sum_{j \geq 0} \left( ( a^4_1)_{j}  + (a^2_2)_j \right) u^{j} \right) \frac{du}{\lambda u}.
\end{align*}

Using that
\[
P_1^4(u) + P_2^2(u) = \sum_{j \geq 0} \left(  (a^4_1)_{j}  + ( a^2_2 )_j \right) u^{j} = 1 +  \sum_{j \geq 1} \left( ( a^4_1)_{j}  + (a^2_2)_j \right)u^{j} ,
\]
we have the following exact formula for $t_{\max}$:
\begin{align}
\notag
t_{\max} &= \int_0^{\theta_0} \left( 1+ \frac{3}{4}\sum_{j \geq 1} \left(  ( a_1^4 )_{j}  +  (a_2^2)_j \right) u^{j} \right)
\left( \sum_{j \geq 1} \left( (a_1^4)_{j}  +  (a_2^2 )_j  \right) u^{j} \right)\frac{du}{\lambda u} \\
\notag
	&= \int_0^{\theta_0} \left( \sum_{j \geq 1} \left( (a_1^4)_j + (a_2^2)_j \right) u^{j-1} + \frac{3}{4} \sum_{j \geq 2} \left( (a_2^4)_j + 2(a_1^4 \ast a_2^2 )_j + (a_1^8)_j \right) u^{j -1} \right) \frac{du}{\lambda} \\
\label{tmax_exact_demo3}
	&= \frac{1}{\lambda} \left( \sum_{j \geq 1} \left( (a_1^4)_j + (a_2^2)_j \right) \frac{\theta_0^j}{j} + \frac{3}{4} \sum_{j \geq 2} \left( (a_2^4)_j + 2(a_1^4 \ast a_2^2 )_j + (a_1^8)_j \right) \frac{\theta_0^j}{j} \right).
\end{align}

\subsection{Chain of connecting orbits as separatrix}
\label{sec:dep_blowup_time}

In what follows, we discuss a \KMa{global nature of saddle-type blow-up solutions in dynamical systems}.
In Figure \ref{fig-KK-global}, we numerically observe that the compactified phase space is separated into \KMa{four} domains, one of which consists of points whose trajectories tend to the origin as $\tau \to \pm \infty$, while another consists of points whose trajectories tend to equilibria on the horizon as \KMa{either both $\tau \to \pm \infty$, or only $\tau \to -\infty$ or $\tau \to + \infty$}.
Namely, the latter \KMa{sets consist of} initial points \KMa{which solutions through these points} blow up in finite times in the original coordinate.
A significant importance of this observation is that these \KMa{four} domains are divided by \KMa{sequences} of \KMa{trajectories} {\em including \KMa{ones inducing blow-up solutions}}.
In particular, saddle-type blow-up solutions themselves or bounded \KMa{global-in-time} trajectories connecting blow-up solutions can locally divide initial points into the \KMa{above domains}.
\par
\bigskip
\KMa{As demonstrated in Section \ref{sec:demo2_extension} and mentioned previously, connecting orbits between equilibria can be validated through the parameterization, extension of local (un)stable manifolds and construction of trapping regions (namely, local stable manifolds of sink equilibria).
In two-dimensional systems like (\ref{KK}), the detailed nature of global dynamics can be \KMa{easily} considered by studying asymptotic behavior of solutions through neighborhoods of connecting orbits.
Moreover, our validated connecting orbits involve blow-up solutions, and the characteristic value $t_{\max}$ is associated to all points on validated connecting orbits and solutions close to them.
Here we study connecting orbits involving hyperbolic saddles on the horizon and global-in-time solutions for the desingularized vector field, and the corresponding characteristics in the original vector field, yielding significantly different nature of asymptotic behavior.
In particular, we investigate the following issues:
\begin{itemize}
\item Dependence of blow-up characterizations on magnitude of initial points.
\item Continuous dependence of $t_{\max}$ on initial points.
\end{itemize}
}
\KMa{(Local) stable manifolds of saddle equilibria locally separate neighborhoods of the equilibria, as well as those asymptotic behavior, unlike sink and source equilibria.
The first issue is then equivalent to a non-trivial question here is {\em whether such a separation around the horizon can significantly change the asymptotic behavior of solutions for the original vector field}.
}
\par
\par
\KMa{Now we have a hyperbolic saddle on the horizon $p_{\infty, s}^+$, a bounded source $p_b^+$ and the origin $p_0$ as a hyperbolic saddle. 
As shown in Figures \ref{fig-KK-global} and \ref{fig-1d-stable-manifold-ex3}, local stable manifolds of $p_{\infty, s}^+$ and $p_b^+$ are validated through the parameterization method and extended through the integration of (\ref{KK-desing}) like connecting orbits in Figure \ref{fig-Nagumo-global}.}
Let $C_{sep}$ be the union of validated connecting orbits:
\begin{equation}
\label{Csep}
\KMa{C_{sep} := \left(\overline{W^u(p_b^+)\cap W^s(p_0)}\right) \cup \left(\overline{W^u(p_b^+)\cap W^s(p_{\infty, s}^+)}\right).}
\end{equation}

\subsubsection{Dependence of blow-up characterizations on magnitude of initial points}

\KMa{First we consider the following issue.}
\begin{prob}
Does the blow-up behavior depend on magnitudes of initial points ?
\end{prob}
In arguments of blow-up criteria, \KMa{magnitudes (equivalently, norms)} or values of several functionals of initial points are typically concerned for determining whether or not the corresponding solutions blow up.
In many cases, there are mathematical arguments showing that initial points whose norms or associated functionals are sufficiently large induce finite-time blow-up.
On the other hand, there are also several mathematical results of blow-up behavior which do not mention the magnitude of initial \KMa{points}.
The aim of the present issue here is to reveal a qualitative characterization of \KMa{asymptotic behavior around saddle-type blow-up solutions}, which partially gives an answer to the above question.
\par
Now we choose two pairs of initial points.
One pair is located close to $p_{\infty, s}^+$, while another pair is located close to the origin.
In both pairs, two initial points are located at the opposite side to each other across $C_{sep}$.
More precisely, the former pair is chosen close to $(x_1, x_2) = (0.83, 0.53)$, while the latter pair is chosen close to $(x_1, x_2) = (0.32, 0.32)$.
The corresponding points {\em in the original coordinate} are approximately
\begin{equation}
\label{points-original-demo3}
(u,v) = (3.39444993, 4.69501202)\quad\text{ and }\quad (u,v) =(0.36072017, 0.40662201),
\end{equation}
respectively.
\KMa{Details are drawn in Figure \ref{fig-KK-separatrix}.}
The methodology shown in Section \ref{sec:methodology} is applied to validating \KMa{global-in-time} trajectories for (\ref{KK-desing}) through each point, showing that the \KMa{asymptotic behavior} of trajectories are completely separated for both pairs of initial points.
More precisely, 
\KMa{
\begin{itemize}
\item across saddle-type blow-up solutions, the asymptotic behavior of solutions as those for (\ref{KK}) significantly change, one of which attains $t_{\max} = \infty$, while another attains $t_{\max} < \infty$. 
\end{itemize}
Moreover, we also observe that
\begin{itemize}
\item such a nature can be observed {\em even near the origin}, where another connecting orbit between $p_0$ and $p_b^+$ locally separates the phase space and is connected to the saddle-type blow-up solution generated by $p_{\infty, s}^+$.
\end{itemize}}
See Figures \ref{fig-KK-separatrix} and \ref{fig-KK-separatrix-initial-zoom}.
\KMa{From the above observation, we can say that {\em the magnitude of initial points is not always essential to determine the blow-up behavior}.
In other words, the chain $C_{sep}$ plays a role in the {\em separatrix} dividing global-in-time solutions and blow-up solutions.}
The key point is that the chain $C_{sep}$ including the saddle on the horizon locally separates the phase space, and that there are sinks $p_{\infty}^+ \in \mathcal{E}$ and $p_b^- \in \mathcal{D}$ inducing global-in-time solutions for (\ref{KK-desing}) approaching to them.
The significant change of solutions in the original vector field is then responsible for the existence of saddle-type blow-up solutions, in particular $C_{sep}$, sinks on the horizon and another sinks on the other side of $C_{sep}$.
Nevertheless, saddle-type blow-up solutions themselves play a role in the trigger of the above nature.
\KMa{Finally note that the present observation can be applied to other dynamical systems like (\ref{Nagumo-at-infty}), where the global extension of stable manifolds characterizing saddle-type blow-up solutions is validated in Section \ref{sec:demo2_extension} (cf. Figure \ref{fig-Nagumo-global}).
}

\begin{figure}[htbp]\em
	\begin{minipage}{1.0\hsize}
		\centering
		\includegraphics[width=8.5cm]{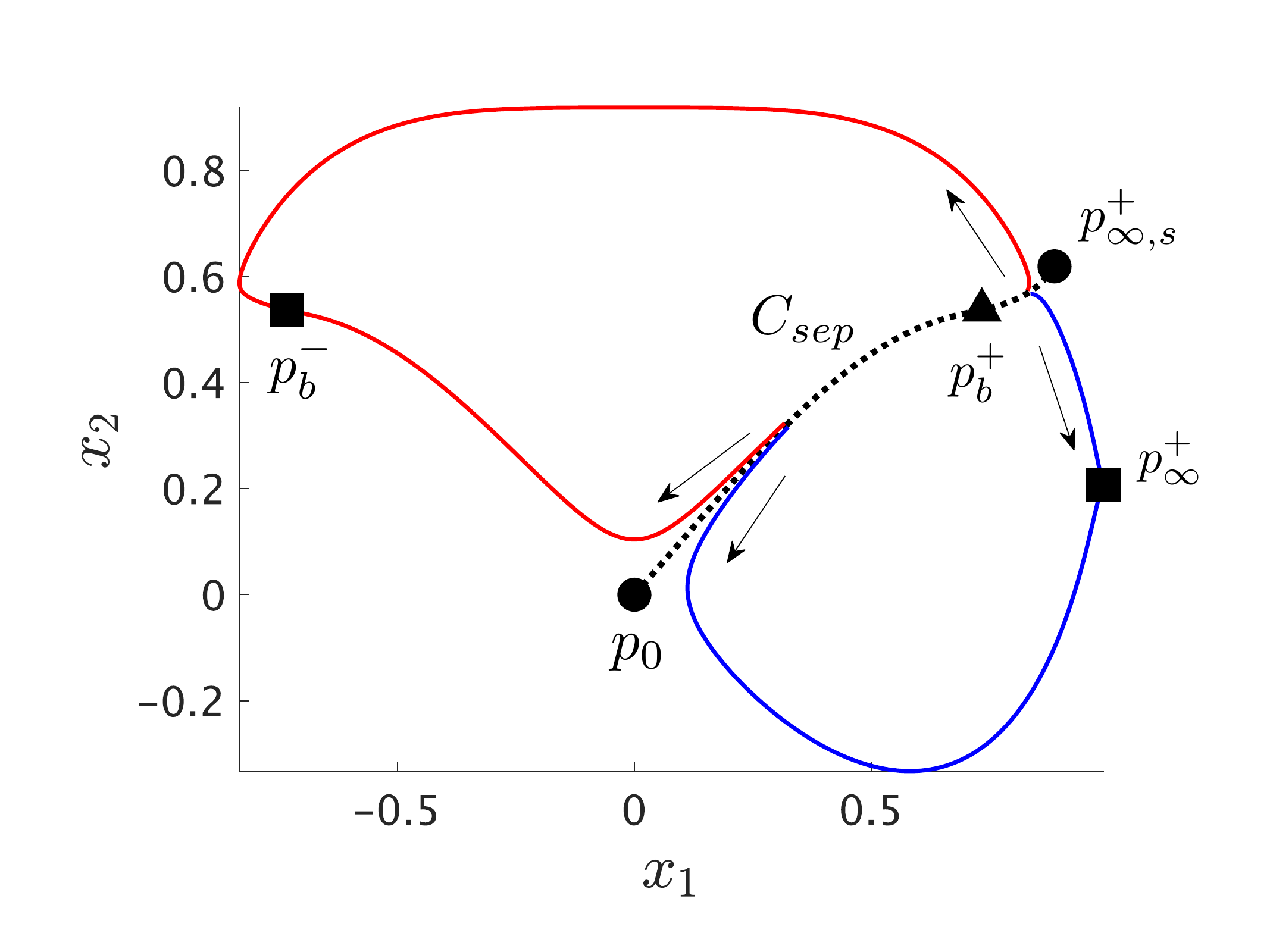}
	\end{minipage}
	\caption{The chain $C_{sep}$ and asymptotic behavior of solutions near $C_{sep}$}
	\flushleft
	A nature of the chain $C_{sep}$ defined by (\ref{Csep}), the collection of black (solid and dotted) curves, is drawn.
	Red curves correspond to \KMa{global-in-time} solutions for (\ref{KK}), while blue curves correspond to blow-up solutions for (\ref{KK}).
	Colors correspond to Figure \ref{fig-KK-global}.
	Initial points are indeed separated by $C_{sep}$, no matter how large they are.
	See also Figure \ref{fig-KK-separatrix-initial-zoom}.
	\label{fig-KK-separatrix}
\end{figure}

\begin{figure}[htbp]\em
	\begin{minipage}{0.5\hsize}
		\centering
		\includegraphics[width=6.0cm]{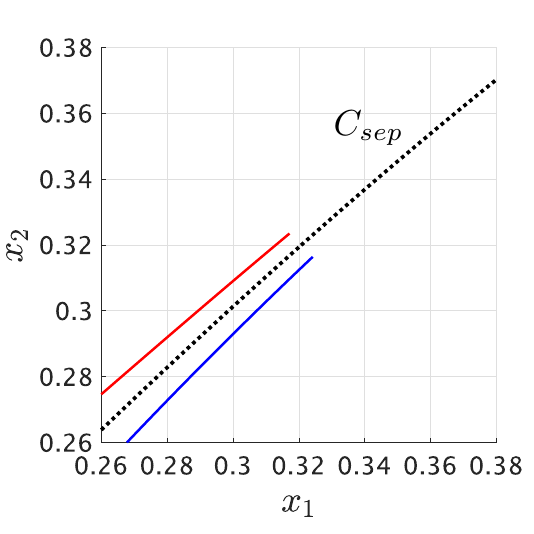}
	(a)
	\end{minipage}
	\begin{minipage}{0.5\hsize}
		\centering
		\includegraphics[width=6.0cm]{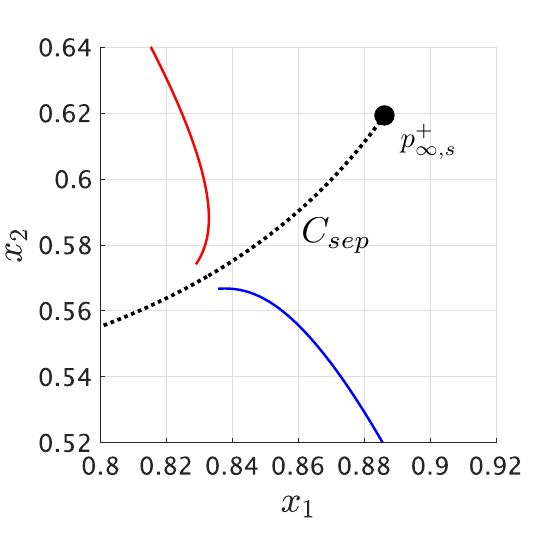}
	(b)
	\end{minipage}
\caption{\KMa{Enlarged view} of initial points in Figure \ref{fig-KK-separatrix}}
	\flushleft
	Endpoints of colored curves in (a) denote initial points of the \KMa{global-in-time} solution (red) and the blow-up solution (blue) going to the direction towards the origin, respectively, while those in (b) denote initial points of the \KMa{global-in-time} solution (red) and the blow-up solution (blue) going to the direction towards the saddle $p_{\infty, s}^+$, respectively.
\label{fig-KK-separatrix-initial-zoom}
\end{figure}

\subsubsection{Continuous dependence of $t_{\max}$ on initial points}

\KMa{Next we investigate the continuous dependence of $t_{\max}$ on initial points across $C_{sep}$ given in (\ref{Csep}).}
Here we consider a line segment $\ell$ which is transverse to $C_{sep}$.
See Figure \ref{fig-KK-zoom}.
The segment $\ell$ is chosen so that $C_{sep}$ and $\ell$ are orthogonal to each other at the boundary $p_{0,s}$ of $W_{\rm loc}^s(p_{\infty, s}^+)$ validated by the parameterization method (cf. Figure \ref{fig-1d-stable-manifold-ex3}).
The boundary $p_{0,s}$ of $W_{\rm loc}^s(p_{\infty, s}^+)$ in $\mathcal{D}$ is then uniquely determined as the intersection $C_{sep}\cap \ell \equiv \{p_{0,s}\}$.
Our problem here is then stated as follows.

\begin{prob}
Does the blow-up time vary continuously on $\ell$ ?
If not, study whether $t_{\max}$ is discontinuous only in each side of $C_{sep}$ on $\ell$, or discontinuous in both sides of $C_{sep}$.
\end{prob}

\KMa{Indeed, the concrete dependence of $t_{\max}$ cannot be unraveled unless explicit formulae (or both lower and upper bounds) for $t_{\max}$ as functions of initial points are obtained.
Our present methodology enables us to unravel this hidden nature in a reasonable way.}
\par
To study the above problem, the following steps are operated.
\begin{enumerate}
\item Set a line segment $\ell$ transverse to the chain $C_{sep}$.
\item Compute the blow-up time \KMa{$t_{\max}$} of the solution through $\{p_{0,s}\} \equiv C_{sep}\cap \ell$.
\item Choose several points on $\ell$ in the blue region, shown in Figure \ref{fig-KK-zoom}, and validate blow-up times through these points.
\item Plot all validated blow-up times and study the distribution.
\item \KMa{Investigate the distribution provides continuous dependence on initial points}.
\end{enumerate}

\begin{figure}[htbp]\em
	\begin{minipage}{1.0\hsize}
		\centering
		\includegraphics[width=8.0cm]{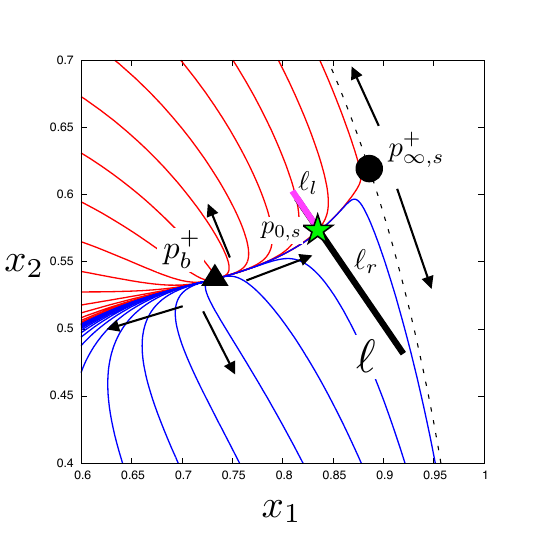}
	\end{minipage}
	\caption{\KMa{The enlarged view} of Figure \ref{fig-KK-global}: choice of the segment $\ell$.} 
	\flushleft
	The black ball is the saddle $p^+_{\infty, s}$, while the black triangle is the source $p_b^+$.
	The black dotted curve is the horizon $\mathcal{E}$.
	The red curve connecting $p^+_{\infty, s}$ and $p_b^+$ is a component of the \KMa{chain} $C_{sep}$.
	Recall that trajectories through points in the red region correspond to \KMa{global-in-time} solutions for (\ref{KK}), while trajectories through points in the blue region correspond to blow-up solutions for (\ref{KK}).
	\KMa{A line $\ell$ is chosen so that it is transverse to $C_{sep}$ and is divided into two segments $\ell_l$ (purple line) and $\ell_r$ (black line) across $C_{sep}$ and it is orthogonal to $C_{sep}$ at $p_{0,s}$ mentioned below}.
	The intersection point $\{p_{0,s}\} \equiv \ell \cap C_{sep}$ is denoted by the green star.
	\label{fig-KK-zoom}
\end{figure}

\begin{figure}[htbp]\em
\begin{minipage}{0.5\hsize}
\centering
\includegraphics[width=8.0cm]{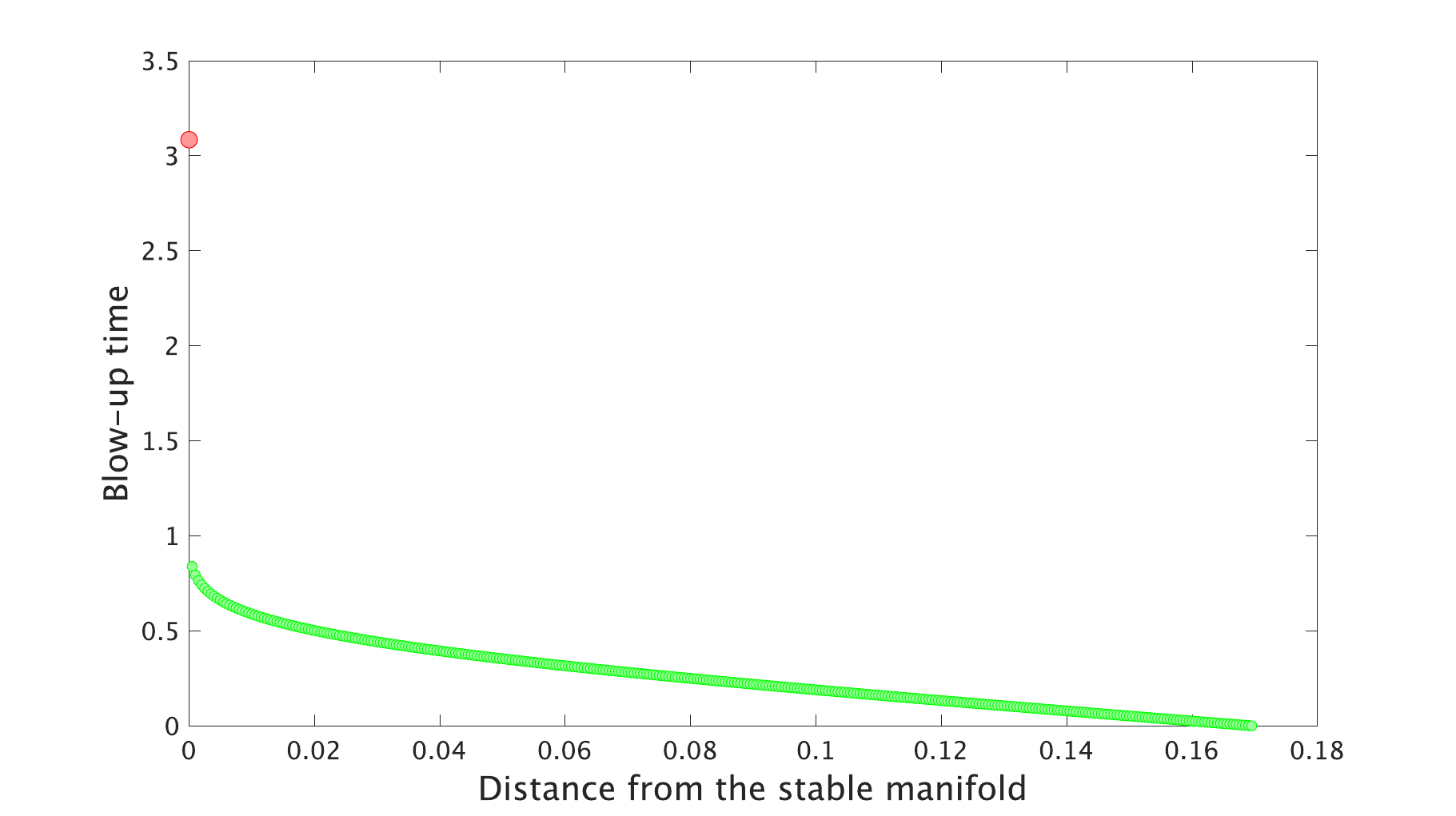}
(a)
\end{minipage}
\begin{minipage}{0.5\hsize}
\centering
\includegraphics[width=8.0cm]{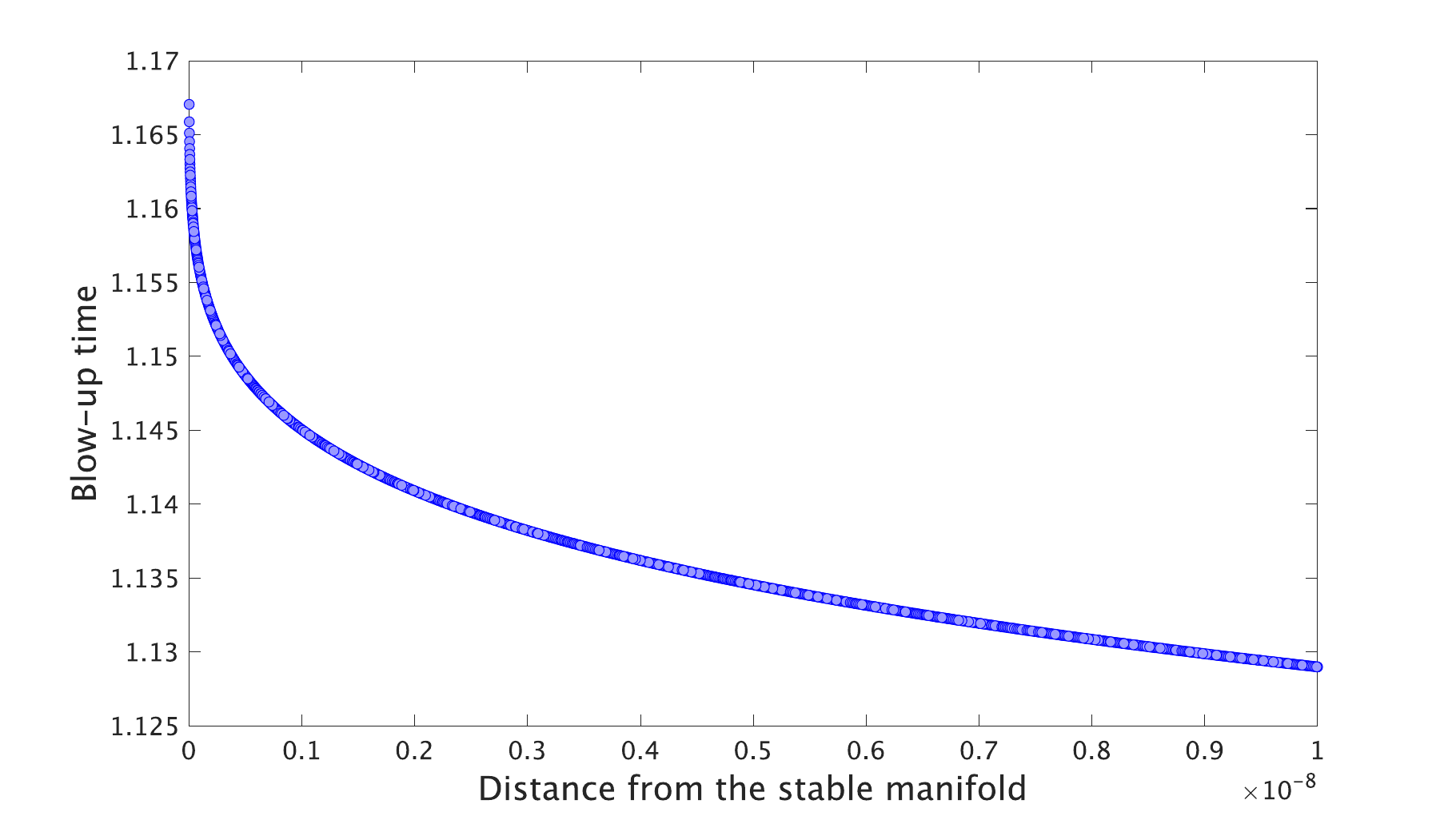}
(b)
\end{minipage}
\caption{Blow-up times of solutions with initial points on $\ell$}
\flushleft
We have totally chosen $10,000$ points on $\ell_r$ for validating $t_{\max}$.
\par
(a) Relationship of points on $\ell_r$ and the blow-up times of solutions through those points.
Horizontal: distance from $p_{0,s}$ on $\ell$. 
Vertical: blow-up time $t_{\max}$ of the corresponding solution.
The value $0$ on the horizontal axis corresponds to $p_{0,s}$. 
The blow-up time $t_{\max} = t_{\max}(p)$ looks discontinuous at $p = p_{0,s}$.
The red point denotes $t_{\max}(p_{0,s})$, while green points denote $t_{\max} = t_{\max}(p)$ at $p\in \KMa{\ell_r} \setminus \{p_{0,s}\}$. 
All plotted blow-up times here \KMa{except} $t_{\max}(p_{0,s})$ have rigorous error bounds less than $3.7235\times 10^{-5}$, while the rigorous error bound of $t_{\max}(p_{0,s})$ is $2.5175\times 10^{-11}$.
\par
(b) \KMa{Enlarged view} of the graph (a) for points within the distance $\leq 1.0\times 10^{-8}$ from $p_{0,s}$.
All plotted blow-up times here \KMa{except} $t_{\max}(p_{0,s})$ have rigorous error bounds less than $1.8288\times 10^{-2}$.
As $p \in \ell_r$ approaches to $p_{0,s}$, $t_{\max}$ significantly increases.
In the present study we do not have validations for $t_{\max}$ associated with points $p\in \ell_r$ within the distance $\leq 1.0\times 10^{-13}$.
\label{fig-blow-up-time-saddle}
\end{figure}

The point $p_{0,s}$ decomposes the line segment $\ell$ into two pieces, denoted by $\ell_l$ and $\ell_r$ consisting of points on $\ell$ in the left side \KMa{(red in Figure \ref{fig-KK-zoom})} and the right side \KMa{(blue in Figure \ref{fig-KK-zoom})} of $p_{0,s}$, respectively.
Our validations, rigorous integrations of (\ref{KK-desing}) in forward time direction, show that all sample points on $\ell_r$ converge to $p_{\infty}^+$ as $\tau \to \infty$, which correspond to a family of sink-type blow-up solutions.
Their validated blow-up times as well as the blow-up time of the solution \KMa{through $p_{0,s}$} are shown in Figure \ref{fig-blow-up-time-saddle} with their rigorous error bounds.
Looking at Figure \ref{fig-blow-up-time-saddle}, the corresponding blow-up times increase as sectional points \KMa{on $\ell_r$} become close to $W^s_{\rm loc}(p_{\infty, s})$\KMa{. }
On the other hand, all points on $\ell_l$ converge to the sink equilibrium $p_b^-$ \KMa{(Figure \ref{fig-KK-global})}. 
\KMa{Because} the preimage of $p_b^-$ under the compactification is bounded, the corresponding solution in the original time-scale exists for all $t \geq 0$.
This fact is easily confirmed by showing that $t_{\max}(p) = \infty$ \KMa{for $p\in \ell_l$}.
These observations show that $t_{\max}$ is discontinuous as a function of points on $\ell$ at $p_{0,s}$ from $\ell_l$.
\par
Next we discuss the continuity of $t_{\max}$ at $p_{0,s}$ on $\{p_{0,s}\}\cup \ell_r$.
Our validations show that 
\begin{equation*}
t_{\max}(p_{0,s}) \in 3.109637008_{391221}^{441572},
\end{equation*}
which is much higher than \KMa{$t_{\max} = t_{\max}(p)$ through $p\in \ell_r$}, according to Figure \ref{fig-blow-up-time-saddle}.
However, $t_{\max} = t_{\max}(p)$ drastically increases as $p\in \ell_r$ approaches to $p_{0,s}$.
At the point $p\in $ with $|p_{0,s} - p| = 1.0\times 10^{-13}$, validation of blow-up solutions did not succeed.
As long as we have validated, we cannot conclude the discontinuity of $t_{\max}$ at $p_{0,s}$ in both sides. 
Nevertheless, we can still conclude that $t_{\max}$ behaves in a singular manner around $p_{0,s}$ where the trajectory approaches to different invariant sets \KMa{as $\tau\to \infty$}.

\begin{rem}
Rigorous enclosures of $t_{\max}$ \KMa{on $W^s(p_{\infty}^+)$, namely sink-type blow-up solutions}, are validated by local Lyapunov functions and rigorous integrations of (\ref{KK-desing}), which are exactly machineries applied in \cite{MT2020_2} \KMa{and hence the detailed validation \KMa{methodology} is omitted}.
The difference of orders of (the worst) rigorous error bounds of $t_{\max}$ on and off $W^s(p_{\infty, s})$ shown in Figure \ref{fig-blow-up-time-saddle} comes from that of the methodology for validating rigorous bounds of $t_{\max}$.
Nevertheless, there is no significant influence on the qualitative tendency of $t_{\max}$ in the present study.
\end{rem}

\KMa{
\begin{rem}[Different choice of $\ell$ can provide different distributions of $t_{\max}$.]
If we choose a line segment $\ell$ across $W^s_{\rm loc}(p_0)$ instead of $W^s_{\rm loc}(p_{\infty, s}^+)$, then $t_{\max}$ at the unique intersection point $W^s_{\rm loc}(p_0)\cap \ell$ is $+\infty$, which provide the different distribution of $t_{\max}$ from Figure \ref{fig-blow-up-time-saddle}.
\end{rem}
}

\begin{rem}[Behavior of $t_{\max}$: a numerical experiment]
We have numerically calculated the behavior of $t_{\max}$ as a function of distance to the stable manifold in Figure \ref{fig-blow-up-time-saddle}-(b).
Let $x$ be the distance of a point $p$ from $p_{0,s}$ in $\ell_r$ and $t_{\max}(x)$ be the corresponding blow-up time.
As far as we have calculated, we could not match $t_{\max}(x)$ by functions of the form $x^a$, $e^{ax}$, $c(\ln x)^a$, and $Cx^a (\ln x)^b$ for constants $a,b,c$.
It is needless to say that this asymptotic form can be different for smaller $x$ and a different choice of $\ell$.
\end{rem}

\KMa{
\subsubsection{Short summary of our observations}

Our observations here are summarized as follows.
\begin{itemize}
\item Blow-up characterizations such as the asymptotic behavior and blow-up times {\em do not always depend continuously on initial points} in the presence of saddle-type blow-up solutions.
\item The blow-up time $t_{\max}$ varies in a singular manner near the chain of connecting orbits involving saddle-type blow-ups, like $C_{sep}$.
\end{itemize}
Note that these features cannot be unraveled only from local information around invariant objects, because local invariant manifolds themselves do not determine the asymptotic behavior of solutions through all points around the manifolds.
In other words, global information of solutions are necessary to investigate this issue.
It should be also noted that the above nature is observed not only by the presence of invariant sets like $C_{sep}$, but also by the presence of another invariant sets like $p_{\infty}^+$ and $p_b^-$, at least one of which is included in the horizon $\mathcal{E}$.
This consequence strongly supports the importance of investigations of global dynamical structure to unravel the significantly different asymptotic behavior of solutions for the original vector field.
Computer-assisted proofs provide a systematic and mathematical rigorous way to investigate such global information of solutions.
Moreover, the presence of saddle-type blow-up solutions provides an easy prediction of the existence of the above nature.}

\section{Concluding remarks}

In this paper, we have shown several characteristics of blow-up solutions for autonomous ODEs which are unstable under perturbations of initial points, referred to as {\em saddle-type blow-up solutions}, with the computer-assisted proofs of their existence and analytic characterization of blow-up times.
\KMa{
Combining compactifications, time-scale desingularizations of vector fields, parameterization of invariant manifolds and their extensions via ODE integrations with computer-assisted proofs, blow-up solutions and their extensions are validated systematically, no matter how stable equilibria on the horizon characterizing these blow-up solutions are.
It should be noted that, as seen in all examples, {\em our methodology does not require a priori information about the existence of blow-up solutions}.
This is a big advantage so that the present methodology can be applied to various dynamical systems and blow-up problems under mild assumptions.
}
\par
\KMa{
Characteristics we have unraveled in the present paper are just examples of intrinsic natures which saddle-type blow-up solutions induce. 
But it is not an easy task to predict the presence of such features theoretically, because these are observed as the composite of multiple structures.
For example, distribution of $t_{\max}$ can be investigated by the combination of an analytic expression of $t_{\max}$ and explicit distribution of local stable manifolds of equilibria on the horizon for desingularized vector fields.
\KMa{As for the separatrix nature among global-in-time solutions and blow-up solutions}, it cannot be characterized without concrete distribution of global-in-time solutions, sink-type and saddle-type blow-up solutions.
Computer-assisted proofs, on the other hand, connect features of explicitly validated objects to extract global nature as the composite of local characteristics, like the above features.
These computation techniques efficiently work to gain insights into blow-up solutions.}

\par
\bigskip
\KMa{
We end this paper by leaving comments about topics involving saddle-type blow-up solutions, which can relate to the present study towards further insights into global nature of blow-up solutions, dynamics at infinity and general finite-time singularities.
}

\subsection{Remarks on \KMa{saddle-type} blow-up solutions in science and engineering}
\label{section-remark-intro}
\KMa{Saddle-type} blow-up solutions can arise in scientific and engineering studies.
We review several preceding studies to assert the importance of \KMa{saddle-type} blow-up solutions, and believe that our present methodology will contribute to unravel the dynamical nature of finite-time singularities involving saddle-type blow-up solutions in the following kinds of problems.

\subsubsection{Singular shock waves}
In the Riemann problem of the systems of {\em conservation laws}
\begin{equation}
\label{conservation}
U_t + f(U)_x = 0
\end{equation}
for some smooth $f: \mathbb{R}^n\to \mathbb{R}^n$, namely the initial value problem of (\ref{conservation}) with 
\begin{equation*}
U(0,x) = \begin{cases}
U_L & x<0,\\
U_R & x>0,\\
\end{cases}\quad \text{ for }\quad U_L, U_R\in \mathbb{R}^n,
\end{equation*}
{\em shock waves} are characterized by locally integrable (weak) solutions with discontinuities with the constraints called {\em jump conditions} or the {\em Rankine-Hugoniot conditions}.
With the assumption of {\em viscous shock criterion}, the Riemann problem is reduced to find connecting orbits of the traveling wave ODE associated with (\ref{conservation}) connecting $U_L$ and $U_R$.
In the 1980s and 1990s, shock waves with a singular nature on the front were observed for a simple system of conservation laws, which are referred to as {\em delta-shocks} or {\em singular shocks}.
Roughly speaking, singular shocks are characterized by shocks with Dirac's delta singularity on the shock front (see e.g., \cite{KSS2003, KK1990, S2007} for precise discussions of delta-shocks and singular shocks). 
A typical feature of singular shocks with the presence of the delta-like singularity is that several constraints in jump conditions are violated\footnote{
In $n$-dimensional systems of conservation laws, jump conditions are characterized by $n$ (non)linear equations.
}, which is referred to as the presence of the {\em Rankine-Hugoniot deficit} of a shock measuring the magnitude of singularity on the shock front.
From the viewpoint of dynamical systems, there is a characterization of singular shocks (e.g., \cite{SSS1993}), showing that {\em singular shocks can consist of a collection of blow-up solutions and \lq\lq invariant sets at infinity''}.
In several concrete problems such as the Keyfitz-Kranser model \cite{KK1990} and the two-phase model \cite{KSS2003}, the geometric singular perturbation theory plays a key role in characterizing singular shocks as a singular perturbation of blow-up connections for the traveling wave problems associated with the original conservation laws with the regularization keeping the self-similarity of waves (well-known as {\em Dafermos regularization}).
Preceding studies with blowing-up (desingularization) of singularities and the geometric singular perturbation theory indicate that singular shocks are characterized by trajectories approaching to {\em normally hyperbolic invariant manifolds}, corresponding to the infinity for appropriately transformed dynamical systems \cite{H2016, S2004}\footnote{
It is also indicated that the Rankine-Hugoniot deficit is measured by trajectories at infinity connecting blow-up solutions \cite{KSS2003}.
When the Rankine-Hugoniot deficit is absent, the corresponding shock wave is characterized in the ordinary sense.
}.
\KMa{We believe that saddle-type blow-up solutions can play key roles in characterizing such singular nature both qualitatively and quantitatively (e.g., Rankine-Hugoniot deficits).}

\subsubsection{Suspension bridge}
The equation of the following form is well studied as a model expressing scientific and engineering phenomena:
\begin{equation}
\label{suspension}
w''''(t) + kw''(t) + f(w(t)) = 0\quad (t\in \mathbb{R}),
\end{equation}
where $k\in \mathbb{R}$ is a parameter and $f$ is a locally Lipschitzian.
This equation arises in the dynamical phase-space analogy of a nonlinearly supported elastic struture \cite{HBT1989} and a model characterizing pattern formations in physical, chemical and biological systems \cite{BS2006}.
See also e.g., \cite{PT2012}.
In \cite{BFGK2011}, a possible finite-time blow-up for the solution of (\ref{suspension}) is discussed with a mild assumption 
\begin{equation*}
f\in {\rm Lip}_{loc}(\mathbb{R}),\quad f(t)t > 0\quad \text{ for every}\quad t\in \mathbb{R}\setminus \{0\}.
\end{equation*}
A fundamental result involving blow-up is that the existence of a blow-up solution $w(t)$ for (\ref{suspension}) as $t\to t_{\max} < \infty$ implies that
\begin{equation}
\label{blow-up-osc}
\liminf_{t\to t_{\max}}w(t) = -\infty\quad \text{ and }\quad \limsup_{t\to t_{\max}}w(t) = +\infty,
\end{equation}
namely a blow-up with oscillation.
Moreover, the existence of the above oscillatory blow-up for (\ref{suspension}) with a specific nonlinearity $f$ is proved.
There are several reports about the relationship between the system (\ref{suspension}) to traveling waves for the the model equation of a {\em suspension bridge}
\begin{equation*}
u_{tt} + u_{xxxx} + \gamma u^+ = W(t,x),
\end{equation*}
proposed by Lazer-McKenna \cite{LM1990}.
According to many preceding works and historical sources, one of the most interesting behaviors for suspension bridges (including the Tacoma Narrow Bridge where was collapsed in November 1940) is the following:
\begin{center}
Large vertical oscillations can rapidly change, almost instantaneously, to a {\em torsional oscillation} (quotation from \cite{GP2011}).
\end{center}
Preceding works involving this catastrophic phenomenon discuss the mechanism of torsional oscillations in detail\footnote{
In \cite{GP2013}, there are several additional comments about the case of London's Millennium Bridge (April 2007) and the Assago metro Bridge in Milan (February 2011).
See the reference papers therein for details about these engineering topics.
}, one of which is considered to be the oscillatory blow-up behavior mentioned above.
It should be noted that there is another direction to the origin of such torsional oscillations.
In \cite{AG2015}, it is explained that {\em internal resonances} can trigger the torsional instability.
\par
Later successive works (e.g., \cite{GP2013}) have reported the qualitative nature of the above blow-up such as infinitely many change of signs before blow-up, vanishing intervals of oscillations several quantitative estimates.
In order to obtain the nature, several growth conditions of $f$ (but generalized under these conditions unlike \cite{GP2011}), restrictions to $k$ and an inequality for derivatives of solution $w$ at an initial time are assumed.
It should be noted that {\em norms of initial points are not essential to characterize the above behavior}.
See \cite{GP2013} for details.
Recently, the first author and collaborators \cite{DALP2015} have characterized the above blow-up nature for particular nonlinearity $f$ in (\ref{suspension}) by constructing a concrete asymptotic form of blow-up profiles and validating a periodic solution with \KMa{computer-assisted proofs}.
In \cite{DALP2015}, it is also validated that the periodic solution for an auxiliary equation is unstable, which indicates that {\em the corresponding blow-up solution is unstable under perturbations of initial points}.
It is thus expected that the blow-up nature which is unstable under perturbations of initial points plays a key role in describing rich and interesting, sometimes catastrophic, scientific and engineering nature.

\begin{rem}
\label{rem-intro-per-blowup}
In \cite{Mat2018}, it is proved that blow-up behavior with wide oscillations like (\ref{blow-up-osc}) can be characterized by periodic orbits at infinity, which is referred to as a {\em periodic blow-up}. 
More precisely, global trajectories on the stable manifold of a hyperbolic periodic orbit \KMa{on the horizon for the desingularized vector field} correspond to blow-up solutions with oscillations whose asymptotic behavior, such as the blow-up rate and the oscillatory nature, are uniquely determined by the order of the original vector field and the periodic orbit \KMa{on the horizon}.
The fundamental machinery for this characterization is the same as that shown in Section \ref{sec:pre_blow_up}.
Arguments in the present paper will also contribute to reveal universal mechanisms of this kind of  blow-up solutions \KMa{which are saddle-type both quantitatively and qualitatively, and their validations.}
\end{rem}

\KMa{\subsubsection{More comments}}
We leave several comments about the link to blow-up behavior arising in the suspension bridge problem. 
As noted\KMa{,} it is proved in \cite{DALP2015} with the computer assistance that there is an unstable hyperbolic periodic orbit $\Gamma = \{w(t)\}$ expressing an asymptotic behavior of blow-up behavior for (\ref{suspension}) with specific $k$ and $f$.
It is then conjectured in \cite{DALP2015} that, for the appropriately transformed dynamics from the problem of the form (\ref{suspension}), {\em the boundary of the basin of attraction of the origin coincides with $W^s(\Gamma)$}. 
A consequence of the conjecture is the existence of a three dimensional manifold which \lq\lq separates" the phase space and for which solutions with \KMa{initial points} taken on one side of the manifold blow-up in finite time while on the other side, solutions converge to the origin.
In the present paper, we have focused on unstable, in particular saddle-type, blow-up solutions which can extract the above nature.
We have revealed here that \KMa{saddle-type} blow-up solutions, even with the simpler asymptotic behavior than \cite{DALP2015}, can separate the phase space so that initial points on one side determine \KMa{global-in-time} solutions, while those on the other side induce blow-up solutions.
\KMa{We have mainly investigated asymptotic behavior of solutions near a chain of connecting orbits for desingularized vector fields including saddles on the horizon, like $C_{sep}$ given in (\ref{Csep}) for (\ref{KK-desing}), and shown that $C_{sep}$ triggers the above significantly different asymptotic behavior among solutions.
In particular, $C_{sep}$ have played a role as a separatrix among solutions for the original vector field.}
We believe that \KMa{such invariant objects} can characterize the \lq\lq boundary" of the basin of attraction mentioned in \cite{DALP2015}.
\par
Note that the above \KMa{object} is characterized only for stationary blow-up \KMa{(Theorems \ref{thm:blowup-dir}, \ref{thm:blowup-Poincare} and \ref{thm:blowup-parabolic})} so far.
On the other hand, a computer-assisted proof of the existence of (un)stable manifolds of hyperbolic {\em periodic orbits} is already established in e.g., \cite{CLM2018}, and the treatment of blow-up solutions involving periodic orbits at infinity is also established in \cite{Mat2018, Mat2019}\KMa{. 
In other words, the same machinery as shown in Section \ref{sec:pre_blow_up} can be applied. }
Going back to the suspension bridge problem, combination of preceding works with the arguments in the present paper can contribute to unravel the nature of blow-up behavior in (\ref{suspension}) only with a few mild assumptions.

\section*{Acknowledgements}
JPL was supported by an NSERC Discovery Grant.
KM was partially supported by Program for Promoting the reform of national universities (Kyushu University), Ministry of Education, Culture, Sports, Science and Technology (MEXT), Japan, World Premier International Research Center Initiative (WPI), MEXT, Japan, JSPS Grant-in-Aid for Young Scientists (B) (No. JP17K14235) and JSPS KAKENHI Grant Number JP21H01001.
AT was partially supported by JSPS KAKENHI Grant Numbers JP18K13453, JP20H01820, JP21H01001.

\bibliographystyle{plain}
\bibliography{blow-up_saddle}

\end{document}